\definecolor{refkey}{gray}{.75}
\definecolor{labelkey}{gray}{.5}
\numberwithin{equation}{section}
\numberwithin{figure}{section}
\newtheorem{theorem}{Theorem}[section]
\newtheorem*{theorem*}{Theorem}
\newtheorem{lemma}[theorem]{Lemma}
\newtheorem{proposition}[theorem]{Proposition}
\newtheorem{fact}[theorem]{Fact}
\newtheorem{cor}[theorem]{Corollary}
\theoremstyle{definition}{

\newtheorem{definition}[theorem]{Definition}

\newtheorem*{definition*}{Definition}

\newtheorem{remark}[theorem]{Remark}

}
\newcommand{\R}{\mathbb R}
\newcommand{\cD}{\ensuremath{\mathcal D}}
\newcommand{\cE}{\ensuremath{\mathcal E}}
\newcommand{\cF}{\ensuremath{\mathcal F}}
\newcommand{\cG}{\ensuremath{\mathcal G}}
\newcommand{\cP}{\ensuremath{\mathcal P}}
\newcommand{\cR}{\ensuremath{\mathcal R}}
\newcommand{\cS}{\ensuremath{\mathcal S}}
\newcommand{\cT}{\ensuremath{\mathcal T}}
\newcommand{\ps}{{\hat{p}}}
\newcommand{\one}{{\mathbf{1}}}
\newcommand{\zero}{{\mathbf{0}}}
\renewcommand{\Pr}{{\mathbb{P}}}
\renewcommand{\epsilon}{{\varepsilon}}
\DeclareMathOperator{\diam}{{\mathrm{diam}}}
\newcommand{\tv}{{\textsc{tv}}}
\newcommand{\tmix}{{t_{\textsc{mix}}}}
\newcommand{\potts}{\textsc{p}}
\newcommand{\rc}{\textsc{rc}}
\title[Sampling from the Potts model at low temperatures via random-cluster dynamics]
{On the tractability of sampling from the
Potts model at low temperatures via random-cluster dynamics}
\author{Antonio Blanca}
\address{A.\ Blanca\hfill\break
Department of CSE, Pennsylvania State University }
\email{ablanca@cse.psu.edu}
\author{Reza Gheissari}
\address{R.\ Gheissari\hfill\break
Department of Mathematics \\ Northwestern University }
\email{gheissari@northwestern.edu}
\begin{document}

\maketitle

\thispagestyle{empty}

\vspace{-1cm}
\begin{abstract}
Sampling from the $q$-state ferromagnetic Potts model is a fundamental question in statistical physics, probability theory, and theoretical computer science. On general graphs, this problem may be computationally hard, and this hardness holds at arbitrarily low temperatures.
    At the same time, in recent years, there has been significant progress showing the existence of low-temperature sampling algorithms in various specific families of graphs. Our aim in this paper is to understand the minimal structural properties of general graphs that enable polynomial-time sampling from the $q$-state ferromagnetic Potts model at low temperatures. 
    We study this problem from the perspective of random-cluster dynamics. These are non-local Markov chains that have long been believed to converge rapidly to equilibrium at low temperatures in many graphs. However, the hardness of the sampling problem likely indicates that this is not even the case for all bounded degree graphs.

    Our results demonstrate that a key graph property behind fast or slow convergence time for these dynamics is whether the independent edge-percolation on the graph admits a strongly supercritical phase. By this, we mean that at large $p<1$, it has a large linear-sized component, and the graph complement of that component is comprised of only small components. Specifically, we prove that such a condition implies fast mixing of the random-cluster Glauber and Swendsen--Wang dynamics on two general families of bounded-degree graphs: (a)~graphs of at most stretched-exponential volume growth and (b) locally treelike graphs. In the other direction, we show that, even among graphs in those families, these Markov chains can converge exponentially slowly at arbitrarily low temperatures if the edge-percolation condition does not hold. In the process, we develop new tools for the analysis of non-local Markov chains, including a framework to bound the speed of disagreement propagation in the presence of long-range correlations, an understanding of spatial mixing properties on trees with random boundary conditions, and an analysis of burn-in phases at low temperatures.\footnote{An extended abstract of this paper appeared in FOCS 2023~\cite{BG23-FOCS-abstract} under the title ``Sampling from the Potts model at low temperatures via Swendsen--Wang dynamics".}
\end{abstract}


\section{Introduction}
The $q$-state ferromagnetic Potts model is a classical spin system model central to probability theory and with applications in statistical physics, theoretical computer science, and other fields. 
It is defined on a graph $G= (V(G),E(G))$ as a probability distribution over configurations in $\Omega_\potts= \{1,...,q\}^{V(G)}$,
with a parameter $\beta>0$, corresponding to the {inverse temperature} in physical applications, controlling the strength of the interaction between the edges of $G$. 
Formally, the probability of each configuration $\sigma \in \Omega_\potts$ is given~by:
\begin{align}\label{eq:Potts-measure}
    \mu_{G,\beta,q}(\sigma)  = \frac{1}{Z_{G,\beta,q}^\potts} \exp\Big( \beta \sum_{\{u,v\}\in E(G)}\mathbf 1\{\sigma(u) = \sigma(v)\}\Big)\,.
\end{align}

\noindent
The factor $Z_{G,\beta,q}^\potts$ is a normalization constant and is known as the partition function; $\sigma(u)$ denotes the 
color or spin value of the configuration $\sigma$ at vertex $u$. 
The classical Ising model corresponds to the $q=2$ case.

The question of sampling from the ferromagnetic Potts model is an important one and has been extensively studied on a variety of graphs and temperature regimes (i.e., different values of $\beta$).
In general, it is known that the problem of approximate sampling from~\eqref{eq:Potts-measure}
for $q \ge 3$ and $\beta$ large is \#BIS-hard, in the sense that there exist graphs, including bounded degree ones, for which the approximate sampling problem is as hard as approximately counting the number of independent sets on bipartite graphs~\cite{GoldbergJerrum,GSVY}. This latter task is a well-studied computational problem that is believed not to have a polynomial time approximation algorithm. 
This sharply contrasts with the ferromagnetic Ising case ($q=2$), where polynomial-time samplers have been known since the 1990s~\cite {JSIsing,RanWil}. At the same time, for some families of graphs, notably including $\mathbb Z^d$ and expander graphs, the existence of polynomial-time sampling algorithms has recently been shown for the Potts model at low temperatures: see, e.g.,~\cite{HPR-PTRF,BCHPT-Potts-all-temp,carlson2020efficient,JKP,CGGPS,HeJePe20,Carlson,GhSi22,Huijben-Patel-Regts}. This raises the question of what are the underlying graph structures and temperatures that cause tractability or hardness of approximately sampling from~\eqref{eq:Potts-measure}.
We study this from the perspective of widely-used Markov chain-based algorithms.

One fundamental approach to sampling from Gibbs distributions of the form of~\eqref{eq:Potts-measure} is via Markov chains whose stationary distribution is exactly $\mu_{G,\beta,q}$. The simplest such Markov chain is the Glauber dynamics for the Potts model (also known as the Gibbs sampler), which updates the state of a randomly chosen vertex at each step.
Its simplicity makes it quite appealing to practitioners, 
but it is known to take exponentially (in $\text{poly}(|V|)$) many steps to equilibrate at low temperatures ($\beta$ large).
In lieu of this, in order to sample from~\eqref{eq:Potts-measure}, an oft-used approach is a different family of Markov chains based on the \emph{Edwards--Sokal} coupling of the ferromagnetic Potts model to a graphical model called the random-cluster model \cite{ES}: see~\eqref{eq:fk-definition} for its definition. This family of Markov chains includes the extensively studied {Swendsen--Wang (SW) dynamics}, and its close relative, the Glauber dynamics for the random-cluster model. 

These Markov chains make non-local updates, and 
are often used to bypass the bottlenecks that slow down the convergence of the Potts Glauber dynamics at low temperatures when $q \ge 3$. 
At the same time, the aforementioned \#BIS-hardness of the sampling problem at low temperatures suggests that these Markov chains could not have a polynomial speed of convergence on all graphs. 
For the sake of completeness, we mention that at high temperatures (small $\beta$), these Markov chains converge quickly but have a larger computational overhead per step than the Potts Glauber dynamics~\cite{BCCPSV};
there are also ``intermediate'' temperature regimes corresponding to first-order phase transitions where these Markov chains are known to converge exponentially slowly to equilibrium~\cite{GoJe,BCT,GL1,GLP,PottsRGMetastabilityCMP}. 

In this paper, we systematically analyze these Edwards--Sokal based Markov chains on general graphs at low temperatures. In the process, we develop new tools for the analysis of non-local chains and arrive at an explanation, in terms of geometric properties of the graph, that dictate whether these Markov chains converge quickly or slowly at 
low temperatures (i.e., large, but independent of the system size, values of~$\beta$).

Let us define the Markov chains of interest. For a unified discussion, it is convenient to reparametrize $\beta$ by $p=1-e^{-\beta}$. Notice that low-temperature settings corresponding to $\beta$ large correspond to $p$ close to~$1$. 
The SW dynamics transitions from a configuration $\sigma_t\in \Omega_\potts$ to $\sigma_{t+1}\in \Omega_\potts$ as follows:
\begin{enumerate}
    \item Independently, for every $e = \{u,v\}\in E(G)$ if $\sigma_t(u) = \sigma_t(v)$ include $e$ in $E_t$ with probability~$p$;
    \item Independently, for every connected component $\mathcal C$ in $(V(G),E_t)$, draw a color $c \in \{1,...,q\}$ uniformly at random, and set $\sigma_{t+1}(v)= c$ for all $v\in \mathcal C$. 
\end{enumerate}
It can be checked that the SW dynamics is reversible with respect to $\mu_{G,\beta,q}$ and thus converges to it. 
In effect, the SW dynamics moves on the larger probability space of Potts model configurations 
together with random-cluster configurations. 
The configurations of this model consist of edge subsets, i.e., $\Omega_\rc = \{0,1\}^{E(G)}$, and the SW dynamics can be interpreted as alternating steps of sampling a random-cluster configuration $E_t$ conditionally on the Potts configuration $\sigma_t$, then sampling the Potts configuration  $\sigma_{t+1}$ conditionally on $E_t$. 

A closely related Markov chain is the Glauber dynamics that moves in the space of random-cluster configurations; for brevity, we call this Markov chain the \emph{FK dynamics} since the random-cluster model is also known as the FK model. Here, given an edge subset $E_t\in \Omega_\rc$, we generate $E_{t+1}$ by: 
\begin{enumerate}
    \item Pick an edge $e\in E(G)$ uniformly at random;
    \item Set $E_{t+1} = E_{t} \cup \{e\}$ with probability:
    \begin{align}\label{eq:FK dynamics-update-rule}
        \begin{cases}
            p & \qquad \text{if $e$ is not a cut-edge in $E_t$}; \\ 
            \ps  := \frac{p}{p+(1-p)q} & \qquad \text{if $e$ is a cut-edge in $E_t$};
        \end{cases}
    \end{align}
    and $E_{t+1} = E_t \setminus \{e\}$ otherwise. 
\end{enumerate}
A cut-edge is an edge whose state affects the number of connected components of the configuration. It can be checked that the FK dynamics
converges to the random-cluster distribution~\eqref{eq:fk-definition}. 
After convergence, one may produce a sample from the corresponding $q$-state Potts measure (the one with $\beta$ so that $p = 1-e^{ - \beta}$) with little overhead by independently assigning states uniformly amongst $\{1,...,q\}$ to each connected component of the random-cluster configuration, as in step (2) of the SW dynamics above.  
As such, the FK dynamics provides an alternative Markov chain that can be used to sample from~\eqref{eq:Potts-measure}. 

To formalize convergence rates of these Markov chains, recall that the mixing time of a Markov chain is the number of steps required to reach a distribution close to the stationary distribution (in total variation distance), assuming the worst possible starting state: see~\eqref{def:tmix} for the formal definition.
It is known that the mixing times of the SW and FK dynamics 
can differ only up to a $O(|E(G)|)$ factor (see \cite{Ullrich-random-cluster}).

Both the SW and FK dynamics  
are conjectured to overcome some of the key difficulties associated with sampling from the Potts distribution quickly at low temperatures. They are, therefore, quite popular, but their non-locality makes the rigorous analysis of their mixing times 
significantly more challenging than their Potts Glauber dynamics counterparts. 
In recent years, significant progress has been made in establishing optimal mixing time bounds for the SW and FK dynamics in high-temperature regimes where the corresponding Potts Glauber dynamics is also known to be fast mixing; see, e.g., \cite{BCPSV,BZSV-SW-trees,BCCPSV,GhSi22}. 
These works have resulted in optimal (or nearly optimal) 
mixing time bounds for SW and FK dynamics 
that hold under various correlation decay conditions (e.g., strong spatial mixing, tree uniqueness, Dobrushin uniqueness, spectral independence, etc.). In particular, $p\lesssim 1/\Delta$ implies fast mixing for all graphs of maximum degree $\Delta$.
By contrast, in the low-temperature setting, where correlations do not decay, the Potts Glauber dynamics converges slowly, and alternative efficient sampling algorithms are most needed, there is no generic criterion guaranteeing that the SW and FK dynamics mix quickly.

In fact, rigorous bounds for the mixing time 
of the SW and FK dynamics at low temperatures are rare and  can be summarized as follows. 
In the Ising case of $q=2$,~\cite{GuoJer} showed that these Markov chains mix in $O(n^{10})$ time on all $n$-vertex graphs and all $p$. On the complete graph,~\cite{BS-MF,GSV,blanca2022critical} establish nearly-optimal mixing time bounds throughout the low-temperature regime. On more sophisticated geometries, progress has been limited to the special case of the integer lattice $\mathbb Z^d$. In particular, in~\cite{Ullrich1,BS}, fast mixing was shown in the low-temperature regime on subsets of $\mathbb Z^2$ via planar duality to high-temperatures (see also~\cite{Martinelli-SW} for sharper bounds for the SW dynamics at low temperatures in the Ising case). Recently~\cite{GhSi22} showed fast mixing at low temperatures in cubes in $\mathbb Z^d$.  For general graphs, the only low-temperature criterion known to ensure fast mixing is $p>1-O(1/|E(G)|)$~\cite{Huber}.

This leaves a wealth of questions to explore on general families of graphs, notably including 
the mixing times of the SW and FK dynamics at values of $p$ close to $1$, but importantly, independent of the graph size. We consider this question for two broad families of bounded-degree graphs: graphs of at most stretched-exponential volume growth, and locally treelike graphs (which allow for exponential volume growth). We show that for all such graphs, fast mixing of the SW and FK dynamics at low enough temperatures is ensured if the independent edge-percolation process on the graph, 
where an edge-set $\widetilde \omega \subset E(G)$ is obtained by keeping each edge with probability ${\tilde p}$, independently,
has a \emph{strongly supercritical phase} (i.e., for $\tilde p$ close to $1$, all large connected sets in $G$ intersect the giant component of $\tilde\omega$; see Definition~\ref{def:supercritical-Bernoulli-perc} and~\ref{def:supercritical-Bernoulli-exp} for precise definitions). To illustrate the necessity of this condition, for any arbitrarily large $p<1$, we construct explicit graphs---both ones of polynomial volume growth, and ones that are locally treelike---on which the edge-percolation is not in a strongly supercritical phase, and, in turn, the SW and FK dynamics mix slowly. 

The class of graphs that have strongly supercritical phases for their edge-percolation is an area of extensive study, and it is closely connected to whether the graph has isoperimetric dimension strictly larger than~$1$. The key takeaway from our results is thus a purely geometric mechanism underlying fast or slow mixing of the SW and FK dynamics at large $p<1$ on two large families of bounded degree graphs.

\subsection{Graphs of at most stretched-exponential growth}\label{subsec:subexp-growth}

The first general class of graphs
for which we establish fast mixing of the SW and FK dynamics at low temperatures under the percolation condition are bounded-degree graphs that have at most stretched-exponential volume growth. Let us introduce some notation: in what follows, we think of $G=(V(G),E(G))$ as a connected graph on $n$ vertices, with maximum degree $\Delta\ge 3$, and we fix any $q\ge 2$. For a vertex $v\in V(G)$, let $B_R(v) = \{w: d_G(v,w) \le R\}$ be the set of vertices at graph distance at most $R$ from $v$. For a subset $A\subset V(G)$, let $\partial_e A\subset E(G)$ denote the edge boundary of $A$, i.e., the set of edges in $E(G)$ with exactly one endpoint in $A$.   

\begin{definition}\label{def:eta-stretched-exponential}
    The graph $G$ has \emph{$\eta$-stretched-exponential volume growth} if $|B_R(v)|\le e^{ R^{\eta}}$ for all $v\in V(G)$ and all $R$ sufficiently large (i.e., $R\ge R_0$ for some $R_0$ independent of $n$; for convenience, take $R_0 = 1/\eta$\footnote{At the level of quantification of our bounds, this choice does not affect our main statements; allowing for general $R_0$ would simply add $R_0$ to the set of constants on which the bounds depend.}). 
\end{definition}

Natural graph families with at most stretched-exponential volume growth include bounded-degree lattices in 
$\R^d$; e.g., finite subsets of $\mathbb Z^d$, the triangular and hexagonal lattices, etc., and Cayley graphs of polynomial growth groups. This notion is closely related to a quantitative version of amenability.

We show that the SW and FK dynamics on these graphs are rapidly mixing when the independent edge-percolation process on the underlying graph $G$ has a ``strong supercritical phase'' which we define next.
For $\tilde p \in (0,1)$, let $\widetilde \pi_G = \bigotimes_{e\in E(G)} \text{Bernoulli}(\tilde p)$ denote the independent edge-percolation distribution for $G$.
We note that if $\widetilde \omega$ is drawn from $\widetilde \pi_G$ (i.e., $\widetilde \omega \sim \widetilde \pi_G$), then we can think of $\widetilde \omega$ as, both, a vector in $\{0,1\}^{E(G)}$ or as a subset of edges of $E(G)$. For $B \subset E(G)$, let $\widetilde \omega(B) \in \{0,1\}^B$ denote the state of the edges from $B$ in $\widetilde \omega(B)$. 

\begin{definition}\label{def:supercritical-Bernoulli-perc}
    Let $\widetilde \omega \sim \widetilde \pi_G$.
    We say that $G$ has a \emph{strong supercritical phase} (with parameters $\delta,\tilde p$)
    if there exists $\tilde p < 1$ and $\delta>0$ 
    such that for every $v \in V(G)$, 
    the probability that there exists a connected set $A\ni v$ having $\ell \le |A|\le n/2$ 
    with $\widetilde \omega(\partial_e A)\equiv 0$ is at most $\exp( - \ell^{\delta/(1+\delta)})$, for all $\ell$  sufficiently large (again, meaning $\ell \ge \ell_0$ for some $\ell_0$ independent of $n$, for instance for convenience $\ell_0 = 1/\delta$). 
\end{definition}

Roughly the definition says that the probability that there exists a set $A\subset V(G)$ that is connected in $G$, contains $v$, and has 
size at least $\ell$, but does not intersect the largest component in $\widetilde \omega$, is stretched-exponentially small in $\ell$ (with the exponent governed by the parameter $\delta>0$, which as we will comment on shortly is related to the isoperimetric dimension of the underlying graph). Notice that the existence of a $\tilde p$ in Definition~\ref{def:supercritical-Bernoulli-perc} implies it for all $\tilde p'>\tilde p$ by monotonicity of the strong supercritical property in $\tilde \omega$. 

\begin{theorem}\label{thm:subexp-mixing}
	There exists $\eta_0(\delta) > 0$ and  $p_0(\Delta,q,\delta,\tilde p) < 1$, such that for every graph $G$ with a strong supercritical phase (with parameters $\delta,\tilde p$) and $\eta$-stretched-exponential volume growth for some $\eta \le \eta_0$: 
        \begin{enumerate}
            \item The mixing time of SW dynamics on $G$ is $O(n^2 \log n)$ for every $p\ge p_0$. 
            \item The mixing time of the FK dynamics on $G$ is $O(n \log n)$ for every $p\ge p_0$.
        \end{enumerate}
\end{theorem}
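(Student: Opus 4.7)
My plan is to prove fast mixing by combining three ingredients: a structural result about typical random-cluster configurations at large $p$, a burn-in analysis from arbitrary initial states, and a disagreement-percolation style coupling from ``nice'' configurations, all calibrated so that the stretched-exponential growth rate $\eta$ is small compared to $\delta/(1+\delta)$.

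First I would set up the structural step. By Grimmett's stochastic comparison inequalities, for any $\tilde p<1$ one can pick $p_0<1$ so that for all $p\ge p_0$ the random-cluster measure on $G$ dominates Bernoulli edge percolation at parameter $\tilde p$. Combining this with the strong supercritical phase (Definition~\ref{def:supercritical-Bernoulli-perc}) one obtains that a $\mu_{G,p,q}$-sample has, except on an event of probability $\exp(-\ell^{\delta/(1+\delta)})$, a unique giant component $\mathcal C_\star$ such that every connected subgraph of $G$ of size $\ge \ell$ either meets $\mathcal C_\star$ or lies entirely in $\mathcal C_\star$'s complement with only small components. Call a configuration \emph{nice} if every component of the complement of its giant component has size at most $(\log n)^{1/\eta}$.

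Next I would handle the burn-in by monotone coupling. Started from the ``wired'' configuration (all edges present), the FK dynamics dominates Bernoulli($\tilde p$) percolation after a short burn-in (since the chain mostly refreshes non-cut edges toward $p$ and only rarely hits cut-edges). Using monotonicity and the strong supercritical tail bound applied simultaneously at each vertex (at the cost of a union bound over $n$ vertices, absorbed by the stretched-exponential decay), I would conclude that after $O(n\log n)$ steps the chain is, with probability $1-o(1)$, in a nice state. This controls the mixing time from worst-case initial data once I have mixing from nice initial data.

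For the coupling from nice, I would run the grand coupling of two FK chains $(\omega_t,\omega_t')$ and track the disagreement set $D_t = \omega_t\triangle\omega_t'$. The key claim to prove is a disagreement-percolation bound: if $D_0$ lies in connected components of size at most $L$, then for all $t\le O(n\log n)$, $D_t$ remains contained in connected subgraphs of $G$ of size at most $L^{O(1)}$ with probability $1-o(1)$, and moreover each such local disagreement is resolved within $O(|V|^2\log|V|)$ single-edge updates (SW counterpart obtained via Ullrich's comparison). For this I would use the supercritical percolation estimate: a new disagreement at an edge $e$ can propagate to an edge $f$ only if the pair sits inside a connected component of $G$ disjoint from the giant components of both $\omega_t$ and $\omega_t'$, an event of probability $\exp(-\mathrm{dist}(e,f)^{\delta/(1+\delta)})$ up to polynomial factors. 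Summing over balls of radius $R$ costs at most $e^{R^\eta}$ by the volume growth hypothesis, which is negligible once $\eta<\delta/(1+\delta)$; this fixes $\eta_0(\delta)=\delta/(2(1+\delta))$ say. Path coupling over nice configurations then yields the claimed $O(n\log n)$ mixing for FK and, via Ullrich, $O(n^2\log n)$ for SW.

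The main obstacle I expect is making the disagreement-propagation estimate rigorous despite long-range correlations of the FK measure: unlike in Ising or low-range models, the cut-edge status of an edge depends globally on the configuration, so a local disagreement can in principle trigger changes far away. To close this gap I would develop a ``local resampling'' argument, partitioning updates into ones inside small components of the complement of the giant cluster (which behave like Bernoulli $p$ percolation and couple quickly) and ones touching the giant cluster (which must change the cut-edge status of some edge, paying the supercritical tail). The supercritical exponent $\delta/(1+\delta)$ matched against the volume exponent $\eta$ is what keeps the expected disagreement contracting, and it is this quantitative balance that ultimately dictates the bound on $\eta_0(\delta)$.
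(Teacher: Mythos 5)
Your overall strategy matches the paper's architecture (stochastic domination after burn-in, a structural ``nice'' event, disagreement percolation, space-time recursion, then Ullrich's comparison for SW), so the high-level plan is sound. However, there are three genuine gaps, and the first two are precisely the obstacles the paper's arguments are built to overcome.

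First, your propagation criterion is incorrect. You claim a disagreement at $e$ can reach $f$ only if ``the pair sits inside a connected component of $G$ disjoint from the giant components of both $\omega_t$ and $\omega_t'$.'' That is false: a disagreement can also propagate through the parts of the giant that dangle off its $2$-connected core, because an update at a cut-edge of the giant can alter cut-edge statuses of far-away edges \emph{inside} the giant. The paper's Definition of $\mathsf{CE}_v^{\ne 1}(\omega)$ is deliberately phrased via $\mathcal C_v^{\ne 1}(\omega\setminus e)$ --- the non-giant component in the configuration with $e$ removed --- exactly to capture this case (see the middle and right panels of Figure~\ref{fig:CE-sets}). If you only control non-giant components of $\omega_t$ and $\omega_t'$ themselves, you miss these propagation paths and the claim ``$D_t$ stays of size $L^{O(1)}$'' does not follow.

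Second, your ``summing over balls of radius $R$ costs at most $e^{R^\eta}$'' is the naive union bound that fails here. When you chain the witness back in time, at each of the $K\approx R/\ell$ steps you must enumerate where the next link of the witness could be. If you bound that by the volume of a ball (as you propose), you pay $(e^{\ell^{\cdot \eta}})^K$, and for stretched-exponential growth this blows up faster than the $(t/K)^K$ gain from the clock-ring counting; the paper's Remark~\ref{rem:disagreement-percolation-proof-difficulties} flags this explicitly. The fix in Proposition~\ref{prop:disagreement-probability} is the conditional argument in \eqref{eq:step-i-witness-bound}: it enumerates only over the $\le \Delta\ell^\gamma$ edges in $\mathsf{CE}_{w_i}^{\ne 1}(Z_i)$ (guaranteed by the event $\mathcal E_{\ell}$), not the whole ball, keeping the count polynomial in $\ell$ independent of the volume growth. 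Without something equivalent, your recursion does not close, and the cutoff $\eta_0=\delta/(2(1+\delta))$ you state is not actually what falls out.

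Third, the burn-in step is more delicate than ``union bound over $n$ vertices.'' The event you want to persist through the whole window $[t,2t]$ is both non-monotone and non-local; the number of clock rings in $[t,2t]$ anywhere in $G$ is $\Theta(t|E(G)|)$, each of which could in principle destroy it. A per-vertex union bound alone does not account for this. The paper handles it by introducing the monotone, more local proxy events $\mathcal G_\ell$ and $\bar{\mathcal G}_m$ (Definition~\ref{def:G-l}), showing $\mathcal E_\ell^c\subset\mathcal G_\ell^c$ (Lemma~\ref{lem:E-ell-G-ell}), and paying for far-away updates by the even smaller probability of $\bar{\mathcal G}_m^c$ (Lemma~\ref{lem:far-away-update-influence}); these pieces are what make the union bound over the time window summable. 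Your outline would need a substitute for this mechanism.
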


It is natural to wonder what families of graphs have a strong supercritical phase. The nature of the supercritical phase for edge-percolation on a graph is known to be closely related to the geometric, namely isoperimetric, properties of the graph: see e.g.,~\cite{ABS}. One would expect that general graph families with \emph{isoperimetric dimension} at least $1+\delta$ (meaning 
that $|\partial_e A|\ge |A|^{\delta/(1+\delta)}$ for all subsets $A\subset V(G)$ with $|A|\le n/2$) have a strong supercritical phase in the sense of Definition~\ref{def:supercritical-Bernoulli-perc}. Often at sufficiently large $\tilde p$, a strong supercritical phase can be proven using perturbative Peierls-type arguments; by such means, subsets of $\mathbb Z^d$ and other lattices (e.g., hexagonal and triangular) that are uniformly at least $(1+\delta)$-dimensional,
and planar graphs with a bounded-degree planar dual serve as concrete examples of graphs that have a strong supercritical phase. 
More generally, the structure of the supercritical phase in vertex-transitive graphs of polynomial growth is the subject of deep study (see e.g.,~\cite{contreras2022supercritical,HutchcroftNonTriviality,EasoHutchcroftSupercritical} which tackle the harder problem of understanding the supercritical phase down to a sharp threshold). Natural graphs of super-polynomial but at most stretched-exponential growth are rarer, but one such family are the well-known construction of Grigorchuk groups; in these, the precise $\eta$ in the stretched-exponential growth, and the nature of the graphs' supercritical phase are subjects of active research: see e.g.,~\cite{Grigorchuk-introduction}.

Our proof of Theorem~\ref{thm:subexp-mixing} relies
on a novel framework for controlling the rate at which discrepancies spread between two coupled low-temperature FK dynamics chains that agree inside, say, a ball of radius $R$ around a vertex, but that may disagree outside it. This is sometimes called disagreement percolation, and we use it, 
after a \emph{burn-in period} for the chain (a short period of time after which we can ensure that the certain ``typical" properties of random-cluster configurations are achieved, even though the chain has not equilibrated), to perform space-time recursions to derive our mixing time bounds. 
To the best of our knowledge, this is the first time disagreement percolation has been analyzed in a low-temperature setting for non-local Markov chains like the SW or FK dynamics, where the giant component could hypothetically spread disagreements instantaneously (except in the special case of $\mathbb Z^2$ where low and high temperatures are dual to one another). 
We say more about the obstacles to proving Theorem~\ref{thm:subexp-mixing} using existing tools and the technical novelties in our low-temperature disagreement percolation framework in Section~\ref{subsec:proof-ideas}. 

Since~\cite{VanDenBerg}, bounds on the rate of disagreement percolation have been a tool used to prove a variety of other results for spin systems, including bounds on uniqueness thresholds, equivalences of spatial and temporal mixing~\cite{DSVW}, and tight lower bounds for the mixing time of the Glauber dynamics~\cite{HayesSinclair}.  We do not explore these directions here, but our low-temperature disagreement percolation, which is self-contained to Section~\ref{sec:low-temp-disagreement-percolation}, opens up those same arguments for the low-temperature random-cluster model and its dynamics. For example, extending the lower bounds of~\cite{HayesSinclair} would show that the $O(n\log n)$ in (2) in Theorem~\ref{thm:subexp-mixing} is tight; by contrast, the resulting lower bound for SW dynamics would be $\Omega(\log n)$.

\subsection{Locally treelike graphs}\label{subsec:exp-growth}
We consider next the SW and FK dynamics on
locally treelike graphs.
In this setting, we establish fast mixing on graphs that have a strong supercritical phase with ``$\delta = \infty$" in Definition~\ref{def:supercritical-Bernoulli-perc}. 
That is to say that we assume true exponential tails on the boundaries of non-giant components, with a rate that goes to $\infty$ as $\tilde p\uparrow 1$, to compete with the exponential volume growth.

\begin{definition}\label{def:supercritical-Bernoulli-exp}
    We say that $G$ has an \emph{exponentially strong supercritical phase} 
    if there exists $\tilde p_0<1$ such that for every $\tilde p>\tilde p_0$ and every $v\in V(G)$, the probability that there exists a connected set $A\ni v$ having $\ell \le |A|\le n/2$ with $\widetilde \omega(\partial_e A)\equiv 0$ is at most $\exp( - c_{\tilde p} \, \ell)$ for some $c_{\tilde p}$ going to $\infty$ as $\tilde p$ to $1$. 
\end{definition}

\noindent
While the notion of an exponentially strong supercritical phase is a property of independent edge-percolation on the graph, a simple geometric criterion of expansion, for instance, ensures that this property holds. In particular, if $G$ is an $\alpha$-edge-expander graph, in the sense that for all $A\subset V(G)$ such that $|A|\le n/2$, we have $|\partial_e A|\ge \alpha |A|$, then the assumption holds for a $c_{\tilde p}(\alpha,\Delta)>0$.  

In this regime where exponential volume growth is permitted, we restrict to locally treelike graphs. 

\begin{definition}\label{def:locally-treelike}
     We say a graph $G$ is $(K,L)$-locally treelike if for every $v\in V(G)$, the removal of at most $K$ edges from $E(B_L(V))$ induces a tree on $B_L(v)$.  
\end{definition}

Our main result for locally treelike graphs is the following near-optimal fast mixing bound.

\begin{theorem}\label{thm:exp-growth-mixing}
    Fix any $\epsilon,\eta>0$. There exists $p_0(\Delta,q,K,\tilde p_0,c_{\tilde p},\eta, \epsilon)<1$ such that if $G$ has an exponentially strong supercritical phase (with parameter $\tilde p_0$), minimum degree $3$, and is $(K,\eta \log n)$-locally treelike: 
        \begin{enumerate}
            \item The mixing time of SW dynamics on $G$ is $O(n^{2+\epsilon})$ for every $p\ge p_0$. 
            \item The mixing time of FK dynamics on $G$ is $O(n^{1+\epsilon})$ for every $p\ge p_0$.
        \end{enumerate}
\end{theorem}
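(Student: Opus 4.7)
My plan is to mirror the high-level structure used for Theorem~\ref{thm:subexp-mixing}---a burn-in phase followed by a low-temperature disagreement percolation argument on local balls---but with the scale of the balls now chosen to match the tree-likeness radius $\eta\log n$ from Definition~\ref{def:locally-treelike}. First I would establish a burn-in estimate showing that after polynomially many steps started from any configuration, the FK chain lies, with high probability, in a ``typical'' event $\cT$ on which a linear-sized giant component is present and every vertex not in the giant lies in a connected component of boundary at most $O(\log n)$. Such a statement should follow by combining stochastic domination of the low-temperature random-cluster measure by independent edge-percolation at some parameter $\tilde p<1$ (valid once $p$ is close enough to $1$) with the exponentially strong supercritical phase assumption of Definition~\ref{def:supercritical-Bernoulli-exp}.

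Next, the key local object would be, for each vertex $v$, the ball $B_{\eta\log n}(v)$; by Definition~\ref{def:locally-treelike}, removing at most $K$ edges turns it into a tree $T_v$ of size $n^{O(\eta)}$, and the minimum-degree-$3$ hypothesis ensures that $T_v$ genuinely branches. On such a $T_v$ I would prove a spatial mixing estimate for the random-cluster measure under boundary conditions sampled from $\cT$: whereas worst-case low-temperature boundary conditions on a branching tree fail spatial mixing (the classical reconstruction regime), a \emph{typical} boundary drawn from $\cT$ has enough ``free'' large components and small-boundary pieces that long-range correlations across $T_v$ decay exponentially in depth, with a rate that can be made arbitrarily strong by taking $p\to 1$. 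This is precisely the ``spatial mixing on trees with random boundary conditions'' ingredient promised in the abstract.

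With the burn-in and the random-boundary tree estimate in hand, I would run two copies of the FK dynamics under the low-temperature disagreement percolation coupling from Section~\ref{sec:low-temp-disagreement-percolation}: after the burn-in both chains lie in $\cT$, and the tree estimate bounds the probability that a single-edge disagreement propagates across $B_{\eta\log n}(v)$ by $n^{\eta\,c(p)}$ with $c(p)\to 0$ as $p\to 1$. Feeding this into the same space-time recursion scheme used for Theorem~\ref{thm:subexp-mixing}, but now at the logarithmic block scale, then yields an FK mixing time of $O(n^{1+\eta\,c(p)})$; choosing $p$ large enough so that $\eta\,c(p)\le\epsilon$ gives part~(2), and part~(1) follows via the standard $O(|E(G)|)$ comparison of~\cite{Ullrich-random-cluster}.

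The main obstacle I foresee is quantifying the spatial mixing estimate on $T_v$ under random boundary conditions sharply enough to beat the polynomial-in-$n$ volume of $B_{\eta\log n}(v)$. Since each ball already has size $n^{\Omega(\eta)}$, the tree recursion must absorb a polynomial loss, and only a correspondingly strong ``random-boundary reconstruction does not occur'' statement can pay for it. Isolating which boundary configurations can drive reconstruction, showing these are exponentially rare once $\cT$ holds, accommodating the up to $K$ non-tree edges per ball, and packaging the resulting bound in a form compatible with the edge-by-edge disagreement percolation coupling, is where I expect the bulk of the technical work to sit.
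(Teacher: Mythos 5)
Your plan for the burn-in phase and your recognition that the key ingredient is spatial mixing on trees with a \emph{random} (rather than worst-case) boundary are both aligned with the paper. But the load-bearing step of your proposal---feeding a single-edge disagreement bound into the ``same space-time recursion scheme used for Theorem~\ref{thm:subexp-mixing}''---is precisely the step the paper identifies as failing on graphs of exponential volume growth. Concretely, Proposition~\ref{prop:disagreement-probability} produces a bound of the form $|\partial B_R|\,e^{-R/(2\ell^\alpha)}$, and the recursion~\eqref{eq:wts-recurrence} carries a prefactor of order $|E(B_R)|$. On a locally treelike graph $|\partial B_R|$ and $|E(B_R)|$ are of order $\Delta^R$, and the exponential prefactor overwhelms any decay of the form $e^{-R/\ell^\alpha}$ unless $\ell$ is forced to stay bounded; but then the burn-in estimate (the analogue of Proposition~\ref{prop:burnt-in-is-good}) cannot ensure that the dynamics stay in $\mathcal E_\ell$, since the failure probability there also scales as $(\mbox{volume})\cdot e^{-\ell}$. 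There is no choice of $\ell,R,\eta$ that closes the recursion with exponential volume. The paper states this explicitly both in the proof-ideas discussion and at the start of Section~\ref{sec:trees}.

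The mechanism the paper actually uses to localize is different: after the $O(1)$ burn-in establishes that the boundary conditions induced on $B_{\eta\log n}(o)$ are $(r,K)$-wired (Lemma~\ref{lem:burn-in-boundary-conditions}), the dynamics from the \emph{all-one} and \emph{all-zero} starts are compared via the Peres--Winkler censoring inequality~\cite{PWcensoring}: one censors all updates outside $B_h$ and thereby reduces the problem to (i) a mixing-time bound on the local treelike ball with a fixed single-component-plus-$O(1)$-extras boundary (Lemma~\ref{lem:treelike-mixing-time}, via cut-width/canonical paths), and (ii) a spatial-mixing estimate showing that the \emph{stationary} marginal at the center under an $(r,K)$-wired boundary $\xi$ is close to that under the fully wired boundary (Lemma~\ref{lem:treelike-spatial-mixing}). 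This is a one-shot coupling of equilibria, not a propagation-of-disagreement recursion, which is how the exponential volume is sidestepped: the only ``recursion'' is the sub-multiplicativity of TV distance to stationarity on the fixed local ball.

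A second, more minor inaccuracy: your description of the tree spatial-mixing event (``enough free large components and small-boundary pieces'') inverts the actual mechanism. The useful structure is that the boundary is almost entirely \emph{wired} into a single large component. The coupling between $\pi^{\xi}_{\cT_h}$ and $\pi^{\mathbf 1}_{\cT_h}$ is achieved on the event that there exists a \emph{wired separating set} (Definition~\ref{def:separating-set}), i.e.\ a cutset of vertices each of which percolates down to the wired part of $\xi$. It is the abundance of wired boundary points, not free ones, that makes this separating set exist with high probability under an $r$-wired law. You would want to reorient the tree analysis around that event and prove something like Lemma~\ref{lem:xi-not-c-good}, and then replace the disagreement-percolation recursion by the censoring argument, to salvage the proof.
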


The most canonical example of a graph 
that satisfies all the conditions in this theorem is a $\Delta$-regular random graph (i.e., a graph drawn uniformly at random from the set of all $\Delta$-regular graphs on $n$-vertices). This is a setting that has attracted plenty of attention (see, e.g.,~\cite{BGGSVY,BG20,PottsRGMetastabilityCMP,HeJePe20,GGS23}), and Theorem~\ref{thm:exp-growth-mixing} provides 
fast mixing bounds for the SW and FK dynamics on these graphs at low temperatures. (Note that the bounds will hold with probability $1-o(1)$ over the choice of the random graph.)

Unlike the sub-exponential growth setting, alternative sampling algorithms were known to exist for the  Potts model on expander graphs at low temperatures using the cluster expansion and polymer dynamics (see, e.g.,~\cite{JKP,CGGPS,BlancaCannonPerkinsPolymerSampling,Carlson}). 
Still, to our knowledge, ours is the first proof of sub-exponential mixing times for the SW and FK dynamics at low temperatures (even just for random graphs). 

Regarding the proof techniques, on graphs of exponential growth, the low-temperature disagreement percolation framework used to establish Theorem~\ref{thm:subexp-mixing} breaks down. Even in ideal situations like the Ising model, the optimal recursion obtained from the disagreement percolation framework would not yield rapid mixing on graphs of exponential growth. We, therefore, resort to a vastly different approach, where we utilize a burn-in phase, the censoring technique of~\cite{PWcensoring}, and new spatial mixing results for the random-cluster model on trees amongst a (random) class of sufficiently wired boundary conditions. The latter bound applies in settings where spatial mixing between the wired and free boundary conditions does \emph{not} hold. 

\begin{remark}\label{rem:non-integer-q}
    The bounds of Theorems~\ref{thm:subexp-mixing} and~\ref{thm:exp-growth-mixing} are stated for any integer $q\ge 2$ so that statements apply both to the SW and FK dynamics. The random-cluster model also makes sense for non-integer $q\ge 1$ and our fast mixing results for the FK dynamics apply in this level of generality. 
    In fact, the random-cluster model is defined for $q>0$ but has very different features (negative vs. positive correlations) when $q\in (0,1)$. It was shown in~\cite{LogConcaveII-Annals} that the FK dynamics mixes in polynomial time on all graphs when $q\in (0,1)$.
\end{remark}

\subsection{Slow mixing in worst-case graphs}\label{subsec:slow-mixing}
We complement our fast mixing result by establishing the existence of graphs for which, even at arbitrarily low temperatures, the SW and FK dynamics slow down exponentially. This is already suggested, though not guaranteed, by the \#BIS-hardness of the sampling problem at low temperatures, and our constructions will illuminate the relationship between the notion of a strong supercritical phase for the underlying edge-percolation and the slow mixing of the dynamics. 

\begin{theorem}\label{thm:slow-mixing}
	Fix any $q\ge 3$ and any $p_0<1$. There exists $p\in (p_0,1)$ and a sequence of graphs $G_n$ on $n$ vertices and maximum degree $\Delta$ such that the mixing time of the SW and FK dynamics on $G_n$ is $\exp(\Omega(n))$.  
\end{theorem}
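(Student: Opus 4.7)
I would construct $G_n$ as a subdivision of a base graph $H_m$ on which the FK dynamics is already known to mix exponentially slowly at some parameter $p^\star\in(0,1)$; subdividing each edge of $H_m$ into a path of length $L$ effectively ``dilutes'' the coupling so that the slow-mixing parameter on $G_n$ can be pushed above any prescribed $p_0<1$.

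Fix $q\ge 3$ and choose $H_m$ of bounded degree $\Delta$ (with $m=\Theta(n)$) possessing a first-order random-cluster phase transition at some $p^\star\in(0,1)$ with FK mixing time $\exp(\Omega(m))$ through an ordered/disordered bottleneck. The standard candidate in the locally treelike regime is a random $\Delta$-regular graph at $p^\star=p_c(\Delta,q)$, for which the slow mixing at the transition is analyzed in~\cite{BGGSVY,BG20,PottsRGMetastabilityCMP,GGS23}; for the polynomial-growth regime one can instead take a sufficiently high-dimensional slab in $\mathbb{Z}^d$ with $d=d(q)$ large enough to guarantee a first-order transition. Given $p_0<1$, select $L$ large enough that the series-reduction identity for the random-cluster model (which writes the effective edge-parameter of a chain of $L$ edges as a function of $p,q,L$) admits a solution $p\in(p_0,1)$ producing effective parameter $p^\star$; such an $L$ exists because, for any fixed $p<1$, the effective parameter tends to $0$ as $L\to\infty$. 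Define $G_n$ by subdividing every edge of $H_m$ into a path of length $L$. Then $G_n$ has maximum degree $\Delta$, is locally treelike at scale $\Theta(\log n)$ whenever $H_m$ was, and inherits the polynomial volume growth of $H_m$ up to a bounded shift in the polynomial exponent.

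To argue slow FK mixing on $G_n$ at $p$, I would lift the $H_m$ bottleneck to $G_n$ via the projection
\[
\phi:\Omega_\rc(G_n)\to \{0,1\}^{E(H_m)},\qquad \phi(\omega)_{e_H}=\mathbf{1}\{\text{every edge of the subdivided path for }e_H\text{ is open in }\omega\}.
\]
Summing the random-cluster weights on $G_n$ over the interior configurations of each subdivided path with $\phi(\omega)$ held fixed factorizes path-by-path and identifies the pushforward of the random-cluster measure on $G_n$ under $\phi$ with the random-cluster measure on $H_m$ at the effective parameter $p^\star$; this is the standard series-reduction identity. Letting $(\mathcal{O}_H,\mathcal{D}_H)$ denote the ordered/disordered partition witnessing the first-order transition on $H_m$, the lifted partition $(\phi^{-1}(\mathcal{O}_H),\phi^{-1}(\mathcal{D}_H))$ of $\Omega_\rc(G_n)$ has both pieces of constant measure. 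A single-edge FK move on $G_n$ can toggle a coordinate of $\phi(\omega)$ only when all other $L-1$ edges of the relevant path are open; conditioning on this local event lets one bound the one-step flow across the lifted partition by the flow across $(\mathcal{O}_H,\mathcal{D}_H)$ times a polynomial-in-$n$ factor, yielding conductance $\exp(-\Omega(m))=\exp(-\Omega(n/L))$. Since $L$ is an absolute constant depending only on $p_0,q,\Delta$, this gives FK mixing time $\exp(\Omega(n))$ on $G_n$, and the analogous SW bound follows from the $O(|E(G_n)|)$-equivalence of SW and FK mixing times recalled in the introduction.

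The hardest step will be making this lifting rigorous: beyond the pushforward identity, one must show that FK moves on $G_n$ that do \emph{not} alter any coordinate of $\phi(\omega)$ carry no appreciable cross-flow, and that the moves that do alter $\phi(\omega)$ carry precisely the conditional weight predicted by the marginalization, which requires a careful accounting of the internal-path cluster-count contributions to the random-cluster weights. As a sanity check matching the theme of the paper, the constructed $G_n$ transparently violates Definition~\ref{def:supercritical-Bernoulli-perc}: any connected set $A$ consisting of a long interior segment of a single subdivided path has only two boundary edges in $G_n$, so $\Pr[\widetilde{\omega}(\partial_e A)\equiv 0]=(1-\tilde p)^2$ is a positive constant independent of $|A|$, ruling out any strong supercritical phase and explicitly exhibiting the failure mode that the fast-mixing theorems were designed to rule out.
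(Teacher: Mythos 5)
Your proposal is essentially the paper's proof: subdivide every edge of a known slow-mixing gadget (the random $\Delta$-regular graph at $p_c$ from~\cite{PottsRGMetastabilityCMP}) into a path of constant length chosen so the series law (Lemma~\ref{lem:RC-series-rule}) pushes the slow-mixing parameter above $p_0$, then lift the bottleneck set $A$ from $H_n$ to the subdivided graph via the same projection $\phi$ you describe. The one place the paper is slicker than your sketch is the conductance-transfer step, which you flagged as the hardest part: rather than conditioning on the local event that all other edges of the relevant path are open and tracking internal-path cluster counts, the paper simply observes that $\partial_{P_G}T_A\subseteq T_{\partial_{P_H}A}$ (since a single FK move on $G_n$ that changes $\phi$ must change exactly one coordinate, giving a legal $H_n$-move on the projections), so $\pi_{G_n}(\partial_{P_G}T_A)\le\pi_{H_n}(\partial_{P_H}A)$, and then bounds the conductance by $|E(G_n)|$ times this boundary-to-set measure ratio---no internal-path weight accounting needed.
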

The constructions for Theorem~\ref{thm:slow-mixing} are simple and explicit. In particular, any family of graphs $H_n$ that have slow mixing at \emph{some} parameter value $p_s\in (0,1)$---typically the location of its order/disorder phase transition---can be used as a gadget to construct augmented graphs $G_n$ (depending on $p_s$ and $p_0$) with many of the same properties as $H_n$ (in terms of degree, rate of volume growth, etc.), and a comparable number of edges, for which the SW and FK dynamics are slowly mixing at some $p\in (p_0,1)$. The graph augmentation leverages the \emph{series law} of the random-cluster model to repeatedly split the edges of $H_n$, effectively inducing the behavior at $p_s$ in $H_n$ to occur in $G_n$ at $p\in (p_0,1)$. Using the slow mixing of SW and FK dynamics at the critical point on random regular graphs from~\cite{PottsRGMetastabilityCMP} as the gadget, 
Theorem~\ref{thm:slow-mixing} holds even if we impose that the graph is locally treelike and has exponential volume growth. 
Using the slow mixing at the critical point on $(\mathbb Z/n\mathbb Z)^2$ from~\cite{GL1}, a variant of this theorem also holds for graphs of polynomial growth, but the lower bound there is of the form  $\exp(\Omega(\sqrt{n}))$: see Theorem~\ref{thm:slow-mixing-FK}.

\begin{remark} 
Let us comment on the relationship of Theorem~\ref{thm:slow-mixing} to Theorems~\ref{thm:subexp-mixing} and~\ref{thm:exp-growth-mixing}, given the slow mixing constructions can either have stretched-exponential growth or be locally treelike. Even if $H_n$ has a strongly supercritical phase for its edge-percolation, when we perform the graph augmentation with the series law, the $\tilde p$ for which the edge-percolation on $G_n$ has a strongly supercritical phase is pushed closer to $1$. In geometric language, this is because the isoperimetric dimension is decreasing to $1$, or the edge expansion is decreasing to $0$, as the edges are split in series. In turn, this makes the $p_0$ in Theorems~\ref{thm:subexp-mixing} and~\ref{thm:exp-growth-mixing} (above which we can prove fast mixing) larger than the $p$ for which Theorem~\ref{thm:slow-mixing} gives slow mixing. 
\end{remark}

\subsection{Proof ideas}\label{subsec:proof-ideas}
We now discuss our proof ideas for the fast mixing results, which are the more technically involved. (Our bounds on the SW dynamics follow from the bounds on the FK dynamics by~\cite{Ullrich-random-cluster}, so we focus on the FK dynamics.)
We begin by describing some of the issues one runs into when applying standard proof approaches to general families of graphs at large $p$. 

\subsubsection{Difficulty with classical arguments}
The first tool one might try is path coupling, arguing that the number of discrepancies between two configurations that differ on one edge contracts in expectation. The non-locality of the FK dynamics, however, and the presence of $\Omega(\log n)$ sized components at equilibrium means that a single discrepancy at an edge $e$ can cause discrepancies at some $\Omega(\log n)$ many nearby edges, whereas the discrepancy only decreases if the edge $e$ is selected to be updated. A smarter path coupling was used in~\cite{CF,Huber} to deduce fast mixing for the SW dynamics at high enough (but constant) temperatures, but in the low-temperature regime, their argument for fast mixing requires $p>1-O(1/|E(G)|)$.

Many of the early fast mixing bounds on, say, the high-temperature Ising model, are based on space-time recursions, i.e., arguments that compare the distance to stationarity across balls of time-dependent radii. 
When translated to the FK dynamics, this type of argument runs into the problem that the updates in a small portion of the graph (say, a small ball around a vertex) could  depend on the configuration in the entire remainder rather than a local neighborhood. The one exception to this is the approach of~\cite{MaOl1} for the torus in $\mathbb Z^d$, which gave an implication from the weak-spatial mixing (WSM) condition to fast mixing of the Glauber dynamics. This implication was seen to generalize to the FK dynamics in~\cite{HarelSpinka} (see also~\cite{GhSi22} where finite boxes with boundary conditions were allowed). WSM is known to hold for the random-cluster model at large $p$ on $\mathbb Z^d$ (see e.g.,~\cite{Grimmett}); however, this is a delicate property whose proofs are very geometry specific. It is the case, for example, that on locally treelike graphs like the random regular graph, WSM fails at arbitrarily large $p$.

At high temperatures (small $p$), some of the difficulties with non-locality can be handled using the fact that in the  random-cluster model, all connected components are small, and information is only propagated through these connected components. 
For instance, such an argument was used in~\cite{BS} to implement the disagreement percolation space-time recursion for the high-temperature regime on $\mathbb Z^2$. (In $\mathbb Z^2$, the high and low-temperature regimes are dual to one another, so the same argument could be performed using the dual model at low temperatures; that would be similar to the work of~\cite{Martinelli-SW} on the Ising SW dynamics.)   

\subsubsection{Low-temperature disagreement percolation bounds}

On graphs where the low-temperature random-cluster model does not have a natural high-temperature dual model, however, even at equilibrium, the non-locality of the dynamics is hypothetically not confined since the giant component percolates through the entire graph. The starting point for many of our observations is that a (well-connected) giant component does not create non-local dependencies on its own. In particular, if two configurations that agree at distance $R$ away from an edge $e$ induce different marginals on $e$, it must be the case that in one of the two configurations, either $e$ is incident to a \emph{non-giant} component of size at least $R$, or it disconnects a portion of the giant of size at least $R$ from the 2-connected core of the giant. Whereas the giant component percolates throughout the whole graph, we show that under the assumption of a strong supercritical phase (Definition~\ref{def:supercritical-Bernoulli-perc}), these non-giant, or non-2-connected core connections have (stretched) exponential tails in FK dynamics configurations after an $O(n)$ burn-in period. (In the first $O(n)$  steps, disagreements can spread arbitrarily quickly.)

There are various other delicate points in implementing this argument, both combinatorial and probabilistic in nature, that we describe in greater detail in Section~\ref{sec:low-temp-disagreement-percolation} and~\ref{sec:fast-mixing-subexp}. These include having to carefully approach various counting arguments and union bounds due to the non-localities and possible stretched exponential volume growth: see Remark~\ref{rem:disagreement-percolation-proof-difficulties} and the proof strategy described in Section~\ref{subsec:burn-in-subexp}. In all, we are able to obtain a space-time recursion on the probability of a disagreement at an edge after time $t$; the large value of $p$ is used as a crude initial bound on this probability, 
which the recursion boosts into exponential decay, leading to the optimal $O(n\log n)$ mixing time bound of Theorem~\ref{thm:subexp-mixing}.  

\subsubsection{Mixing after a burn-in phase on locally treelike graphs}
When the volume growth is exponentially fast, the bounds and resulting space-time recursions from disagreement percolation break. Our approach here is therefore closer in inspiration to high-temperature arguments from~\cite{BG20,BG22} (also~\cite{MS09} in the Ising setting). Those papers localized the dynamics to the treelike balls $B_R(v)$ of the underlying graph using the censoring technique of~\cite{PWcensoring} and used the high-temperature uniqueness on trees to reason that if two censored dynamics chains mix in $B_R(v)$ with their respective boundary conditions, then they are coupled at the root of the ball with high probability. The mixing time on the local balls was relatively simple to deduce since the trees would have nearly free boundary conditions, which induce product chains. 

In our low-temperature setting, the key intuition is that after a burn-in period, the boundary conditions induced on balls of radius $\eta \log n$ are ``sufficiently wired", i.e., that they have one (random) linear-sized wired component, and only $O(1)$ many other $O(1)$-sized components. Given this, to get Theorem~\ref{thm:exp-growth-mixing}, we show that FK dynamics on trees with such boundary conditions mix in polynomial time, and that although there is no WSM between the free and wired boundary conditions on trees, 
two sufficiently wired boundary conditions do induce similar marginals on the root. This latter step requires a careful revealing scheme to prove spatial mixing on trees with randomly wired boundary conditions; that is the content of Section~\ref{sec:tree-spatial-mixing}.  

\subsubsection{On the strong supercritical phase condition}
We conclude by remarking on whether the notions of strong supercritical phase in Definitions~\ref{def:supercritical-Bernoulli-perc} and Definition~\ref{def:supercritical-Bernoulli-exp} could be relaxed. One attempt at such a relaxation would be to only ask that the independent $\hat p$-edge percolation have a unique giant component (of arbitrarily small linear size) and exponential, or stretched-exponential, tails on all its non-giant components. For technical reasons related to the fact that this is a non-monotone criterion, our proofs do not go through with this weaker notion of supercriticality. Nonetheless, we expect that such a condition could be sufficient for the fast mixing of the FK and SW dynamics.

    \section{Notation and preliminaries}\label{sec:prelim}
    In this section, we outline our global notation and describe some preliminaries on the random-cluster model and the FK dynamics. Throughout the paper, $n$ will be assumed to be sufficiently large. We also use $C$ to denote a generic constant $C>0$, not depending on $n$, which may vary from line to line. 
    Our underlying graph will be $G = (V(G),E(G))$ and will have $n$ vertices and maximum degree $\Delta$. 
    
    For an edge-subset $A\subset E(G)$, we write $V(A)$ for the set of vertices contained in edges in $A$, though we sometimes abuse notation and write $v\in A$ for $v\in V(A)$. 
    Its vertex boundary $\partial A$ is the set of vertices in $A$ with neighbors in $A^c = E(G)\setminus A$. Its (outer) edge-boundary $\partial_e A$ is the set of edges in $E(G)\setminus A$ that have one end-point in $V(A)$ and one endpoint in $V(G)\setminus V(A)$. 
    We use $\mathcal C_v(A)$ to denote the connected component of $v$ in the subgraph $(V(G),A)$. 
    An edge $e$ is a \emph{cut-edge} in $A$ if there is a vertex $v$ for which $\mathcal C_v(A\cup \{e\}) \ne \mathcal C_v(A\setminus \{e\})$. We use $\mathcal C_1(A)$ to denote the largest component in $\omega$ (chosen arbitrarily if two have the same size). 
    
    \subsection{The random-cluster model}
    The random-cluster model with parameters $p \in (0,1)$ and $q >0$ is a probability distribution over edge-subsets $\omega\subseteq E(G)$, equivalently identified with $\omega\in \{0,1\}^{E(G)}$, given by
    \begin{equation}\label{eq:fk-definition}
        \pi_{G,p,q}(\omega) \propto \, p^{|\omega|}(1-p)^{|E(G)|-|\omega|} q^{k(\omega)}\,,
    \end{equation}
    where $k(\omega)$ denotes the number of connected components in $(V(G),\omega)$.
    When clear from context, we drop $p$ and $q$ and sometimes $G$ from the notation.
    The random-cluster model satisfies the following \emph{domain Markov property}: for $A\subset E(G)$, conditional on $\omega(E(G)\setminus A)$, the distribution of $\omega(A)$ is a random-cluster model with the same parameters, with \emph{boundary conditions} on $\partial A$ induced by $\omega(E(G)\setminus A)$. 
    Random-cluster boundary conditions are defined in generality as follows. 
    
    \begin{definition}\label{def:boundary-conditions}
        Given a graph $G$ and a vertex subset $\partial B$, a boundary condition $\xi$ on $\partial B$ is a partition of $\partial B$. The random-cluster model on $G$ with boundary conditions $\xi$, denoted $\pi_G^\xi$ is defined as in~\eqref{eq:fk-definition}, except that all components intersecting the same element of $\xi$ are identified (``wired") when counting the number of components $k(\omega)$.
    \end{definition}
    Certain important boundary conditions are the \emph{wired} one, denoted $\mathbf{1}$, where all vertices of $\partial A$ are in the same component of $\xi$; the \emph{free}, denoted $\mathbf 0$, where all vertices of $\partial A$ are in distinct components of $\xi$, and if $A$ is a subgraph of $G$, those \emph{induced} by $\omega(E(G)\setminus E(A))$, meaning vertices of $\partial A$ are in the same component of $\xi$ if and only if they are connected through $\omega(E(G)\setminus E(A))$. 
    In this paper, we restrict our attention to the case of $q\ge 1$ where the model exhibits positive correlations. As a consequence, given two boundary conditions $\xi$ and $\xi'$ on $G$, where $\xi \ge \xi'$ (meaning $\xi$ is a coarser partition than $\xi'$), we have $\pi^\xi_G \succeq \pi^{\xi'}_G$.

    \subsection{Mixing times}
    For a Markov chain $(X_k)_{k}$ on a finite state space $\Omega$ with transition matrix $P$, reversible with respect to a distribution $\mu$, 
    its \emph{mixing time} is defined as:
    \begin{align}\label{def:tmix}
    \tmix = \tmix(1/4)\,,\qquad \mbox{where}\qquad \tmix(\epsilon) = \min \{k: \max_{x_0 \in \Omega} \| \mathbb P(X_k^{x_0} \in \cdot ) - \mu\|_\tv \le \epsilon\}\,,
    \end{align}
    where $X_k^{x_0}$ indicates that $X_k$ is initialized from $x_0$, and where $\|\cdot \|_\tv$ denotes total-variation distance. The total-variation distance to $\mu$ satisfies a sub-multiplicativity property, whereby $\tmix(\epsilon) \le \tmix\cdot \log_2(2/\epsilon)$. 
    
    \subsection{FK dynamics}
    \label{subsec:coupling}
    Recall the definition of (discrete-time) FK dynamics from the introduction. It will be preferable in our proofs to work with the continuous-time FK dynamics $(X_t)_{t>0}$. In this variant, the edges of $E(G)$ are assigned rate-1 Poisson clocks, and if the clock at an edge $e$ rings at time $t$, we make an update according to~\eqref{eq:FK dynamics-update-rule}.  It is a standard fact that the mixing time of the discrete-time chain is comparable, up to constants, with $|E(G)|$ times the mixing time of the continuous-time process. In particular, it suffices to show an $O(\log n)$ bound for Theorem~\ref{thm:subexp-mixing} and an $n^{\epsilon}$ bound for Theorem~\ref{thm:exp-growth-mixing} for the continuous-time FK dynamics. FK dynamics updates with boundary conditions $\xi$ are like~\eqref{eq:FK dynamics-update-rule}, except that the cut-edge status of $e$ is determined taking into account the wirings of the components of $\xi$. 

   The FK dynamics is monotone, meaning that if $x_0 \ge y_0$ (under the natural partial order on subsets) then $X_t^{x_0}\succeq X_t^{y_0}$ for all $t\ge 0$, where $\succeq$ denotes stochastic domination. I.e., there exists a \emph{grand coupling} of all the Markov chains $\{(X_t^{x_0})_t\}_{x_0\in \Omega_\rc}$ (generated by independent Poisson clocks and independent sequences of i.i.d.\ $\text{Unif}[0,1]$ random variables at every edge) such that $X_t^{x_0}\ge X_t^{y_0}$ for all $x_0\ge y_0$ and $t\ge 0$. 

    A further monotonicity property we use is with respect to the independent edge-percolation (the random-cluster model with $q=1$). Recall $\ps$ from~\eqref{eq:FK dynamics-update-rule}; it is standard fact that $\pi_{G,p,q} \succeq \pi_{G,\ps,1}$ (see (3.23) in~\cite{Grimmett}).

\section{Low-temperature disagreement percolation}\label{sec:low-temp-disagreement-percolation}

In this section, we develop the FK dynamics disagreement percolation framework that works at sufficiently low temperatures, in particular in the presence of a giant component. In reality, this new disagreement percolation bound works simultaneously at high and low temperatures and localizes the spread of disagreements even in the presence of a large giant component, so long as all other components (and portions of the giant dangling off of its 2-connected core) are small. Moreover, it can work on graphs that have volume growth up to a stretched exponential, which requires new ideas: see Remark~\ref{rem:disagreement-percolation-proof-difficulties}.

In this section, $G$ can be an arbitrary graph of maximum degree $\Delta$. We fix an arbitrary $o\in V(G)$ and $R > 0$, and let $B_R= B_R(o)$. The dependencies on $o$ will be kept implicit.  
The fundamental building blocks of our disagreement set will be \emph{finite-connectivity} clusters; these will disentangle the non-locality of the giant component, which percolates at low temperature, from the edges through which disagreements arise. 

\begin{definition}
    Define the \emph{finite (or non-giant) component} of a vertex $v$ in a random-cluster configuration $\omega$ as $\mathcal C_v(\omega\setminus E(\mathcal C_1(\omega)))$, and denote it by $\mathcal C_v^{\ne 1}(\omega)$.     
\end{definition}

    Since the FK dynamics updates at edge $e=\{u,v\}$ 
    are oblivious to the state of $e$ in the configuration, 
    and only care about the connectivity of $u$ and $v$ in $\omega\setminus e = \omega\setminus \{e\}$, we consider the finite component of a vertex with respect to the configuration $ \omega\setminus e$ rather than $\omega$ itself. 
    Specifically, we often consider 
    $\mathcal C_v^{\ne 1}(\omega\setminus e)$
    for an edge $e$ that is incident to $\mathcal C_v(\omega)$.

    \begin{definition}
            Let $\mathsf{CE}_v^{\ne 1}(\omega)$ be the set of cut-edges in $\omega$  that are in $B_R$ and incident to 
            $\mathcal C_v^{\ne 1}(\omega\setminus e)$; i.e., 
        \begin{align*}
            \mathsf{CE}_v^{\ne 1}(\omega) := \{ e \in E(B_R): e\in \mathrm{Cutedge}(\omega)\,,\, e \sim \mathcal C^{\ne 1}_v(\omega \setminus e)\}\,,
        \end{align*}
        where $\text{Cutedge}(\omega)$ denotes the set of cut-edges in $\omega$. Here we are using the notation $e\sim H$ for a subset $H\subset E$ if $V(e)\cap V(H) \neq \emptyset$. 
    \end{definition}
    
    We refer to Figure~\ref{fig:CE-sets} for some illustrative depictions of such sets in $\mathbb Z^2$ (this is easiest for visualization, but it is key that our definitions do not rely on properties of $\mathbb Z^2$ like its dual graph, and thus work on general graphs). We also refer to the proof of Proposition~\ref{prop:disagreement-region} which yields additional insight into these constructions.

    \begin{figure}
        \centering
        \begin{tikzpicture}
        \node at (-5,0) {
        \includegraphics[scale=0.2]{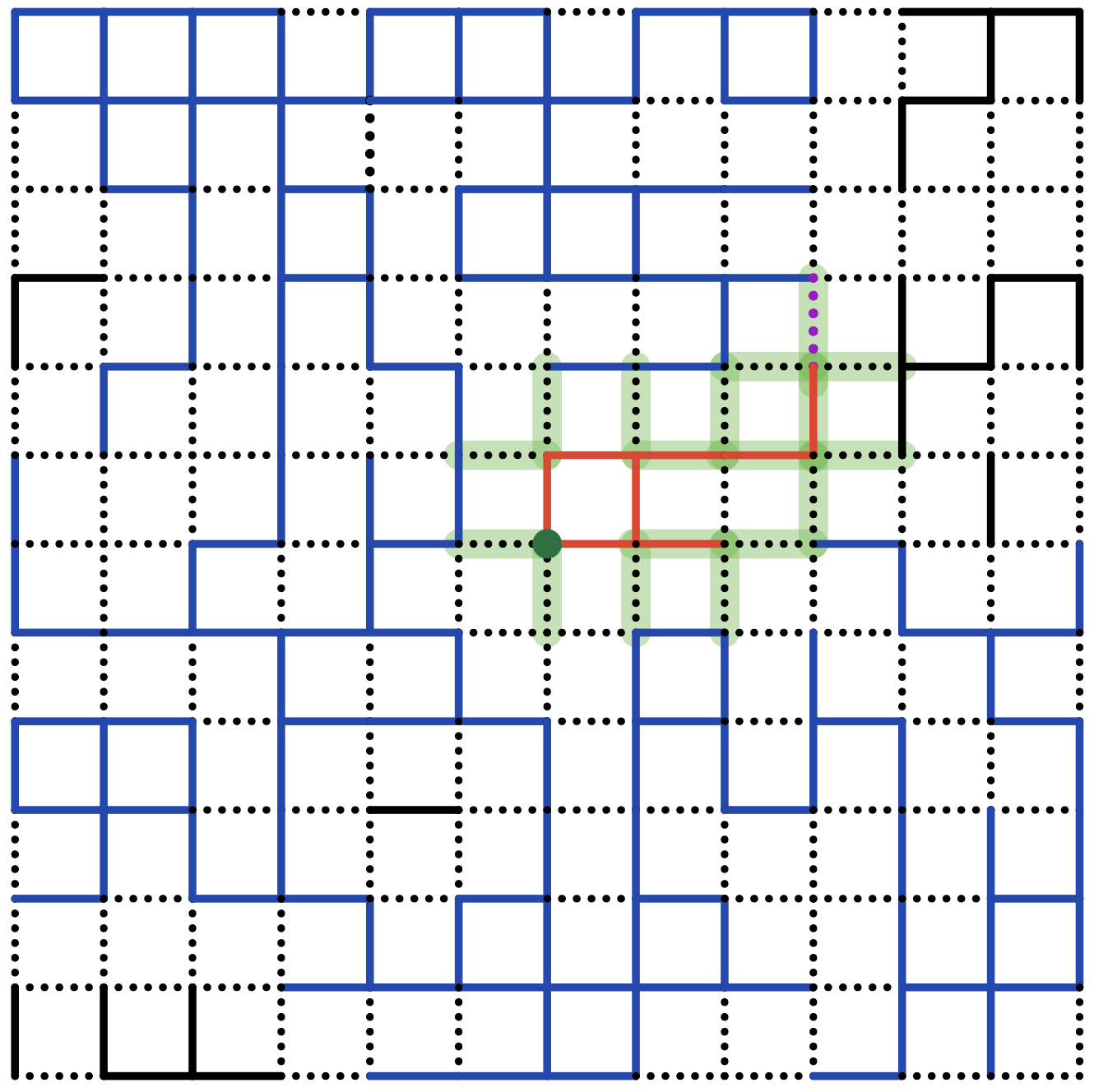}};  
        \node[font = \tiny, color = green!50!black] at (-5.15,-.15) {$v$};
        \node[font=\tiny, color = blue!50!red] at (-4,.95) {$e$};
        \node at (0,0) {
        \includegraphics[scale=0.2]{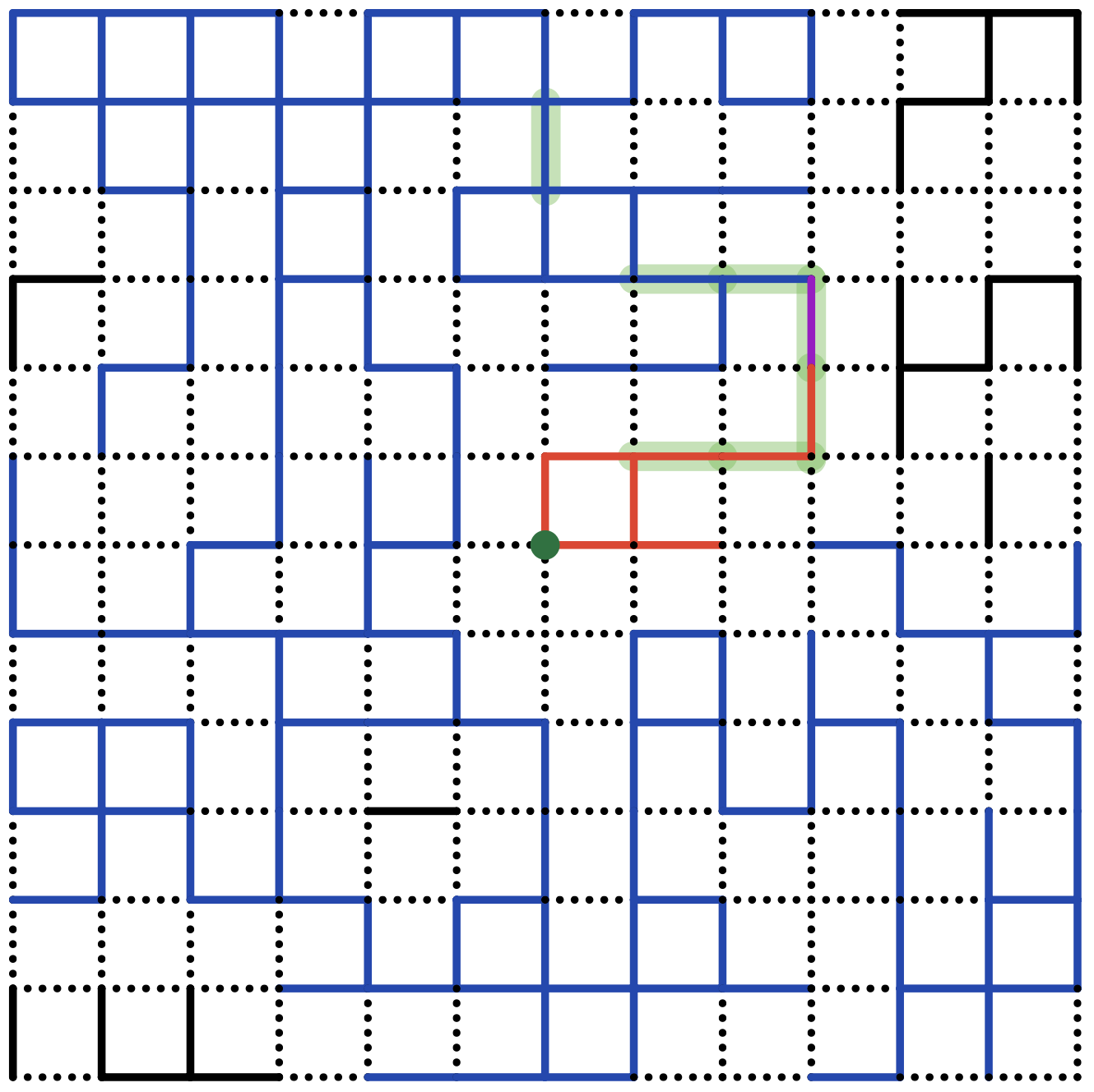}};
        \node[font = \tiny, color = green!50!black] at (-.15,-.15) {$v$};
        \node[font=\tiny, color = blue!50!red] at (1,.95) {$e$};
        \node at (5,0) {
        \includegraphics[scale=0.2]{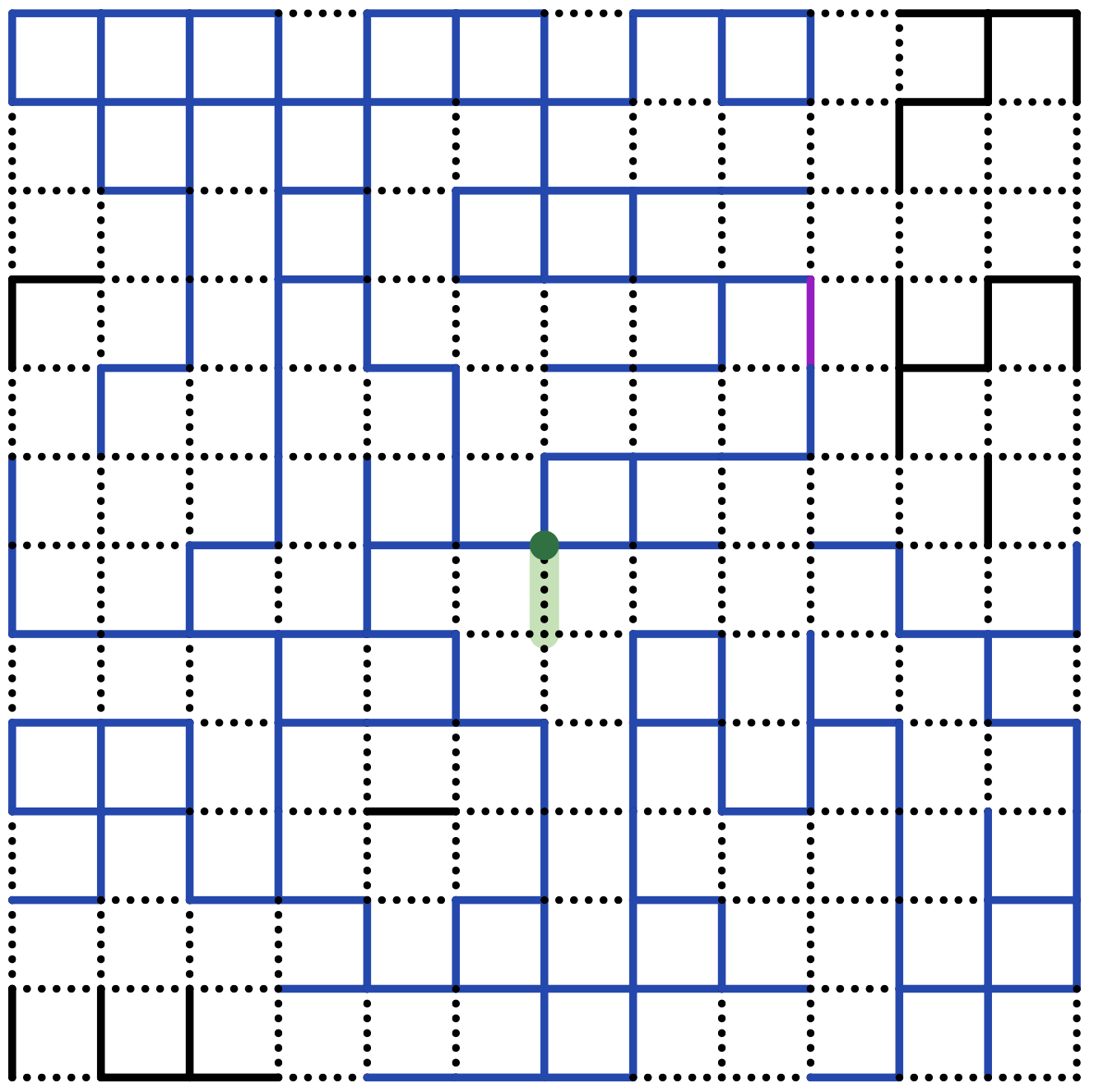}};
        \node[font = \tiny, color = green!50!black] at (4.85,-.15) {$v$};
        \node[font=\tiny, color = blue!50!red] at (6,.95) {$e$};
          \end{tikzpicture}
        \caption{For a vertex $v$ (dark green) and edge $e$ (purple), the sets $\mathcal C_v^{\ne 1}(\omega\setminus e)$ (edges in red) and $\mathsf{CE}_v^{\ne 1}(\omega)$ (edges highlighted in green) are shown in three different cases. Left: $v$ is not part of the giant (blue edges) in $\omega$ (blue and black edges). Middle: $v$ is part of the giant component but not of its $2$-connected core. Right: $v$ in the $2$-connected core of the giant.}
        \label{fig:CE-sets}
      
    \end{figure}

    Suppose $(X_t)$ and $(Y_t)$ are two instances of the FK dynamics on $G$ coupled via the grand coupling introduced in Section~\ref{subsec:coupling}; suppose also that $X_0(E(B_R))=Y_0(E(B_R))$. 
    
    \begin{definition}\label{def:disagreement-region}
        Iteratively construct what we call the \emph{disagreement set} as follows. Let $(t_i)_{i\ge 1}$ be the times of the clock rings in $E(B_R)$, let $t_0 =0$, and let $I_i = [t_{i-1},t_i)$. Then 
        \begin{enumerate}
            \item Initialize $\mathcal D_t = E(G) \setminus E(B_R)$ for $t\in I_1$. 
            \item Suppose $e_i$ is the edge whose clock rings at time $t_i$. If $e_i$ is in $E(B_R) \setminus \mathcal D_{t_i^-}$ and $e_i$ is in $\mathsf{CE}_v^{\ne 1}(Z)$ for some $v\in \partial \cD_{t_{i}^-}$ and $Z\in \{X_{t_i^-},Y_{t_i^-}\}$, let 
            $$ \cD_{t} = \cD_{t_i^-} \cup \{e_i\}\,\qquad \text{for all $t\in I_{i+1}$};$$
            else, let $\cD_t = \cD_{t_i^-}$ for all $t\in I_{i+1}$. (We use the standard notation $A_{t^-}$ to denote $\lim_{s\uparrow t}A_s$.)
        \end{enumerate}
    \end{definition}

    The role of $\cD_t$ is that it confines the set of edges on which a disagreement can possibly exist at time $t$.

    \begin{proposition}\label{prop:disagreement-region}
        For all $t\ge 0$, we have $X_t(e) = Y_t(e)$ for all $e\notin \mathcal D_t$. 
    \end{proposition}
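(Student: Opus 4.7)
The plan is to establish the invariant $X_t(e)=Y_t(e)$ for all $e\notin \cD_t$ by induction on the clock ring times $(t_i)_{i\ge 1}$ on edges in $E(B_R)$. The base case on $[0,t_1)$ follows from the hypothesis $X_0(E(B_R))=Y_0(E(B_R))$ combined with $\cD_t=E(G)\setminus E(B_R)$, and clock rings on edges outside $E(B_R)$ occurring between the $t_i$ only update edges already contained in $\cD_t$ and thus trivially preserve the invariant. At a ring time $t_i$ with ringing edge $e_i\in E(B_R)$, the subcases where $e_i\in \cD_{t_i^-}$ or where $e_i$ is added to $\cD$ are immediate, since no edge outside $\cD_{t_i}$ changes state. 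The only substantive case is $e_i\in E(B_R)\setminus \cD_{t_i^-}$ such that $e_i\notin \mathsf{CE}_v^{\ne 1}(Z)$ for every $v\in\partial \cD_{t_i^-}$ and $Z\in\{X_{t_i^-},Y_{t_i^-}\}$, where I must show $X_{t_i}(e_i)=Y_{t_i}(e_i)$.

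Since the FK update at $e_i$ sets its state to $1$ with probability $p$ or $\ps$ depending solely on whether $e_i$ is a cut-edge in the current configuration, and the grand coupling uses the same $\text{Uniform}[0,1]$ random variable in both chains, it suffices to show that $e_i$ has the same cut-edge status in $X_{t_i^-}$ and $Y_{t_i^-}$. I plan to argue this by the contrapositive: suppose, without loss of generality, that $e_i=\{u,v\}$ is a cut-edge in $X_{t_i^-}$ but not in $Y_{t_i^-}$, and exhibit some $w\in\partial \cD_{t_i^-}$ with $e_i\in\mathsf{CE}_w^{\ne 1}(X_{t_i^-})$, yielding the desired contradiction. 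The cut-edge property makes $\cC_u(X_{t_i^-}\setminus e_i)$ and $\cC_v(X_{t_i^-}\setminus e_i)$ disjoint, so at most one is the giant component of $X_{t_i^-}\setminus e_i$; let $\cF$ be the non-giant one, with containing endpoint $x\in\{u,v\}$, so that $\cF=\cC_x^{\ne 1}(X_{t_i^-}\setminus e_i)$.

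The geometric heart of the argument is the subclaim that $V(\cF)\cap\partial \cD_{t_i^-}\ne\emptyset$. Granted this, any such $w$ satisfies $\cF=\cC_w^{\ne 1}(X_{t_i^-}\setminus e_i)$ and is incident to $e_i$ through $x$, placing $e_i$ in $\mathsf{CE}_w^{\ne 1}(X_{t_i^-})$. To prove the subclaim, I would assume for contradiction that $V(\cF)\cap\partial \cD_{t_i^-}=\emptyset$, so each vertex of $\cF$ is incident either only to edges of $\cD_{t_i^-}$ or only to edges of $E(G)\setminus \cD_{t_i^-}$; connectivity of $\cF$ in $X_{t_i^-}$ together with the disjointness of those two edge sets forces all of $V(\cF)$ to be uniform in this respect. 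Because $x\in V(\cF)$ is incident to $e_i\notin \cD_{t_i^-}$, every vertex of $\cF$ must in fact be incident only to non-$\cD$ edges. The inductive hypothesis then gives $X_{t_i^-}=Y_{t_i^-}$ on every edge incident to $V(\cF)$, which implies that $\cF$ is exactly the connected component of $x$ in $Y_{t_i^-}\setminus e_i$ as well; this contradicts the fact that $u$ and $v$ lie in a common component of $Y_{t_i^-}\setminus e_i$, while $\cF$ contains only one of them.

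The main obstacle is the geometric subclaim above, which is what makes the use of \emph{finite} (non-giant) components in the definition of $\mathsf{CE}_v^{\ne 1}$ essential: it isolates precisely those cut-edge-status changes that cannot be short-circuited through the globally-percolating giant component, and the proof would fail for a naive definition that tracked all cut-edges. Handling the symmetric choice of $\cF$ between $\cC_u$ and $\cC_v$ requires only relabeling, and with the subclaim in hand the induction closes cleanly.
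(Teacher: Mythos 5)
Your proof is correct and follows essentially the same approach as the paper's: both reduce to showing that a cut-edge-status discrepancy at $e_i$ forces the non-giant component of an endpoint of $e_i$ to meet $\partial\cD_{t_i^-}$, which is precisely the condition $e_i\in\mathsf{CE}_v^{\ne 1}(Z)$ for some $v\in\partial\cD_{t_i^-}$. The only difference is stylistic: the paper argues forward by exhibiting paths from $u_i$, $v_i$ to $\partial\cD_{t_i^-}$ and noting one must avoid the giant, while you argue by contradiction that if the finite cluster $\cF$ missed $\partial\cD_{t_i^-}$ it would be determined by the region on which $X$ and $Y$ agree, forcing the cut-edge status to coincide.
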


    \begin{proof}
        We prove the claim inductively. It holds for all $t\in I_1$ since we assumed $X_t(e) = Y_t(e)$ for $e\in E(B_R)$, and no clock rings occur in the interval $I_1$. Supposing it holds for $I_{i}$, the only way it can not hold for $t\in I_{i+1}$ is if the disagreement arises at the edge $e_i=\{u_i, v_i\}$ when the clock rings at time $t_i$. If $e_i \in \cD_{t_i^-}$, then since $\cD_{t_i^-} \subset \cD_t$, we have the claim. 
        Per~\eqref{eq:FK dynamics-update-rule}, if $e_i \notin \cD_{t_i^-}$, in order for a disagreement to arise at $e_i$, it must be the case that $e_i$ is a cut-edge in one of $X_{t_i^-}$, $Y_{t_i^-}$ but not in the other; that is, that $e_i \in \text{Cutedge}(Z)$ for a $Z\in \{X_{t_i^-}, Y_{t_i^-}\}$, but $e_i \not\in \text{Cutedge}(\hat Z)$ for $\hat Z = \{X_{t_i^-}, Y_{t_i^-}\} \setminus \{Z\}$.
        In $\hat Z\setminus \{e_i\}$, the two endpoints of $e_i$ must be connected. 
        Since $X_{t_i^-}(\cD_{t_i^-}^c) = Y_{t_i^-}(\cD_{t_i^-}^c)$, this can only happen via a pair of paths from $u_i$ and $v_i$ that reach $\partial \cD_{t_i^-}$ in $\cD_{t_i^-}^c$. 
        One of these paths must be in 
        $Z\setminus E(\mathcal C_1(Z\setminus\{e_i\}))$, 
        since if both are in $E(\mathcal C_1(Z\setminus \{e_i\}))$, then $u_i$ and $v_i$ are in the same connected component of $Z\setminus e_i$, contradicting the claim that $e_i$ was a cutedge in $Z$. As such, it must be that $e_i \in \mathsf{CE}_v^{\ne 1}(Z)$ for some $v\in \partial \cD_{t_i^-}$. 
    \end{proof}

\noindent
    We now define an event on the realizations of the coupling $(X_t,Y_t)_{t \ge 0}$ that guarantees that disagreements are unlikely to spread rapidly. This will be a bound on the size of finite connections in $E(B_R)$, as well as a bound on the number of cut-edges in a finite component, which we will later show holds with high probability for the FK dynamics after a burn-in for $G$ satisfying the conditions of Theorem~\ref{thm:subexp-mixing}: see Proposition~\ref{prop:burnt-in-is-good}

    \begin{definition}\label{def:E-l-t}
        A random-cluster configuration $\omega$ is in  $\mathcal E_{\ell,\alpha,\gamma}$ if:
        $$
        (1) \quad \max_{v\in B_R} \max_{e\in E(B_R)}\, \diam( \mathcal C^{\ne 1}_{v}(\omega\setminus e))\le \ell^\alpha\,,\quad \text{and} \quad(2) \quad\max_{v\in B_R} |\mathsf{CE}_v^{\ne 1}(\omega)| \le \Delta \ell^\gamma.
        $$
    \end{definition}
    In the applications in Section~\ref{sec:fast-mixing-subexp}, we will take $\alpha = \gamma$ (depending on the $\delta$ for which Definition~\ref{def:supercritical-Bernoulli-perc} holds); however, we write this section for general $\alpha,\gamma$ in case there are better choices in specific situations.

    \begin{proposition}\label{prop:disagreement-probability}
        Suppose $(X_t)_{t},(Y_t)_t$ are FK dynamics chains, coupled by the grand coupling, and such that $X_0(E(B_R)) = Y_0(E(B_R))$. There exists $\epsilon_0(\Delta)>0$ such that for all $\epsilon<\epsilon_0$ and all $t\le \epsilon R/ \ell^{\alpha+\gamma}$,
        \begin{align*}
            \mathbb P\Big(X_t(B_{R/2})\ne Y_t(B_{R/2})\, ,\, \bigcap\nolimits_{s=0}^t \{X_s,Y_s\in \mathcal E_{\ell,\alpha,\gamma}\}\Big) \le C|\partial B_{R}| \exp( - R/(2\ell^\alpha))\,.
        \end{align*}
    \end{proposition}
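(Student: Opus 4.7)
The plan is to reduce to a disagreement percolation event and then extract a causal chain. By Proposition~\ref{prop:disagreement-region}, $\{X_t(B_{R/2})\ne Y_t(B_{R/2})\}\subseteq\{\cD_t\cap E(B_{R/2})\ne\emptyset\}$, so it suffices to upper bound the probability of the latter event intersected with the good event $\bigcap_{s\le t}\{X_s,Y_s\in\cE_{\ell,\alpha,\gamma}\}$. The strategy is to extract from every such realization a causal chain of edge additions linking $\partial B_R$ to $B_{R/2}$, lower bound its length using the finite-cluster diameter bound in $\cE_{\ell,\alpha,\gamma}$, and union bound over all such chains using the $\mathsf{CE}$-size bound together with a Poisson waiting-time estimate.

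For the extraction, let $e_k\in E(B_{R/2})$ be the first edge added to $\cD$ inside $B_{R/2}$, at time $\tau_k\le t$. By Definition~\ref{def:disagreement-region}, $e_k\in\mathsf{CE}_{v_{k-1}}^{\ne 1}(Z_k)$ for some $v_{k-1}\in\partial\cD_{\tau_k^-}$ and $Z_k\in\{X_{\tau_k^-},Y_{\tau_k^-}\}$. Either $v_{k-1}$ lies in $\partial\cD_0$ (essentially $\partial B_R$), or $v_{k-1}$ entered $\partial\cD$ as an endpoint of a previously added edge $e_{k-1}$. Iterating backwards yields a sequence $e_1,\ldots,e_k$ with ring times $\tau_1<\cdots<\tau_k\le t$, vertices $v_0\in\partial B_R$ and $v_i$ an endpoint of $e_i$ for $i\ge 1$, and $e_i\in\mathsf{CE}_{v_{i-1}}^{\ne 1}(Z_i)$ throughout. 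Under $\cE_{\ell,\alpha,\gamma}$, one endpoint of $e_i$ lies in $\mathcal C_{v_{i-1}}^{\ne 1}(Z_i\setminus e_i)$, a set of diameter at most $\ell^\alpha$, so each step advances by at most $\ell^\alpha+1$ in graph distance. Reaching $B_{R/2}$ from $\partial B_R$ therefore forces $k\ge k_0:=\lceil R/(2(\ell^\alpha+1))\rceil$.

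For the counting and probability estimate, $|\mathsf{CE}_{v_{i-1}}^{\ne 1}|\le\Delta\ell^\gamma$ under $\cE_{\ell,\alpha,\gamma}$, so given $v_{i-1}$ there are at most $\Delta\ell^\gamma$ choices for $e_i$ and $2$ for $v_i$; hence the number of candidate chains of length $k$ is at most $|\partial B_R|(2\Delta\ell^\gamma)^k$. For a fixed chain of distinct edges, the event that the Poisson rings contain a subsequence matching the prescribed order is equivalent to the success of the greedy matching (wait for the first ring of $e_1$, then the first ring of $e_2$ after that, and so on), which by memorylessness has probability $\mathbb P(\mathrm{Gamma}(k,1)\le t)\le (et/k)^k$ by Chernoff. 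For $t\le\epsilon R/\ell^{\alpha+\gamma}$ and $k\ge k_0$ one has $et/k\le 2e\epsilon/\ell^\gamma$, so combining yields
\[
\mathbb P\bigl(\cD_t\cap E(B_{R/2})\ne\emptyset,\,\textstyle\bigcap_{s\le t}\{X_s,Y_s\in\cE_{\ell,\alpha,\gamma}\}\bigr)\le |\partial B_R|\sum_{k\ge k_0}(4e\Delta\epsilon)^k.
\]
Setting $\epsilon_0(\Delta):=1/(4e^2\Delta)$ makes the ratio at most $1/e$, so the geometric tail is bounded by $C|\partial B_R|\exp(-k_0)$, which gives the claimed $C|\partial B_R|\exp(-R/(2\ell^\alpha))$.

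The main conceptual obstacle is running this classically-flavored argument in the low-temperature regime: a naive disagreement percolation would fail because a cut edge of the giant component could seemingly transmit disagreement across the whole graph. The definitions of $\mathcal C_v^{\ne 1}$ and $\mathsf{CE}_v^{\ne 1}$, which strip off the giant cluster, are precisely what enforces the $\ell^\alpha$ bound on the single-step advance, and Proposition~\ref{prop:disagreement-region} certifies that disagreements really do propagate only through these finite components. A further delicate point is surviving the union bound on graphs of stretched-exponential volume growth; what makes this possible is that the per-step branching factor is the $\mathsf{CE}$ count $\Delta\ell^\gamma$, which is polynomial in $\ell$, rather than the volume of a ball of radius $\ell^\alpha$ (which could be as large as $\exp(\ell^{\alpha\eta})$) and would overwhelm the Poisson bound.
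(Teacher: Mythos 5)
Your overall strategy is the same as the paper's: reduce to $\{\cD_t\cap E(B_{R/2})\neq\emptyset\}$ via Proposition~\ref{prop:disagreement-region}, extract a witness chain of length $k\gtrsim R/\ell^\alpha$, and charge each step a branching factor $\Delta\ell^\gamma$ from the $\mathsf{CE}$-size cap together with a Poisson waiting-time cost. The Gamma$(k,1)$/greedy-matching bound is a slick alternative to the paper's device of first concentrating the total ring count $M\le 4t|E(G)|$ and then spending $\binom{M}{K}$ on the choice of which rings participate.

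However, there is a genuine gap in the step where you write ``the number of candidate chains of length $k$ is at most $|\partial B_R|(2\Delta\ell^\gamma)^k$'' and then, for ``a fixed chain of distinct edges,'' apply the Gamma bound. The cap $|\mathsf{CE}_{v_{i-1}}^{\ne 1}(Z_i)|\le\Delta\ell^\gamma$ is not a count over a fixed set of edges: the set $\mathsf{CE}_{v_{i-1}}^{\ne 1}(Z_i)$ depends on the random configuration $Z_i$ at time $\tau_i^-$, which is itself a function of the Poisson process you then try to treat separately. If you want a union bound over chains that are fixed in advance, the viable successors of $v_{i-1}$ consistent with $\mathcal E_{\ell,\alpha,\gamma}$ can a priori be any edge within graph distance $\ell^\alpha$ of $v_{i-1}$, and on a graph of $\eta$-stretched-exponential growth that branching factor is $\exp(\ell^{\alpha\eta})$, which would overwhelm the $(et/k)^k$ gain from the waiting times. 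Your own closing paragraph correctly identifies this as the dangerous point, but the argument as written walks into it: you invoke the $\Delta\ell^\gamma$ branching factor as if it were a deterministic combinatorial count, when it is only a conditional bound given the history.

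The paper's fix, which your outline does not contain, is a sequential conditioning on the filtration $\mathcal F_{\tau_i^-}$: one fixes the ring times $\mathcal T$ and the label sequence $L$, and then peels off one ring at a time. Conditionally on $\mathcal F_{\tau_i^-}$ (and $\mathcal T$), the set $\mathsf{CE}_{v_{i-1}}^{\ne 1}(Z_i)$ is measurable and has at most $\Delta\ell^\gamma$ elements on $\mathcal E_{\ell,\alpha,\gamma}$, the location of the ring at time $\tau_i$ is uniform over $E(G)$ and independent of that filtration, and so the conditional probability that the $i$-th step of the witness is realized is at most $2\Delta\ell^\gamma/|E(G)|$. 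Iterating this moves the ``branching factor'' inside the probability computation as a conditional bound rather than a combinatorial one, which is exactly what keeps the cost polynomial in $\ell$ even in the presence of stretched-exponential volume growth. Your length bound, geometric setup, and final arithmetic are all fine, so this conditioning step is the piece you need to add to make the argument rigorous.
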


    \begin{proof}
        By Proposition~\ref{prop:disagreement-region}, the probability of the event under consideration is at most that of the event  $\{\mathcal D_t \cap B_{R/2} \ne \emptyset\}\cap \mathcal E_{\ell,t}$ where for ease of notation,  
        $\mathcal E_{\ell,t} : = \bigcap_{s=0}^t \{X_s,Y_s\in \mathcal E_{\ell,\alpha,\gamma}\}\,.$
        We construct a \emph{witness} to that pair of events as follows. 

        Let $f_0$ be the edge whose clock rang at time $t_{j_0}:= \inf\{s: B_{R/2} \cap \mathcal D_s \ne \emptyset\}$ (note that $f_0\in E(B_{R/2})$). Let $w_0$ be the vertex in $\partial \cD_{t_{j_0}^-}$ for which $f_0\in \mathsf{CE}_{w_0}^{\ne 1}(Z_0)$ for  $Z_0 \in \{X_{t_{j_0}^-},Y_{t_{j_0}^-}\}$ (if there are multiple choices for $w_0$ or $Z$, we choose arbitrarily).          
        Given $(f_j,w_j,Z_j)_{j<i}$, we construct the witness iteratively as follows: 
        \begin{itemize}
            \item Let $f_i$ be the first edge incident to $w_{i-1}$ to be included in $(\cD_{s})_{s \ge 0}$; i.e., $f_i$'s clock rang at time
            \begin{align*}
                t_{j_i}:= \inf \{s: w_{i-1}\in \cD_{s}\}
            \end{align*}
            \item Let $w_i$ be the vertex in $\partial \cD_{t_{j_{i}}^-}$ and $Z_i\in \{X_{t_{j_i}^-},Y_{t_{j_i}^-}\}$ for which $f_i \in \mathsf{CE}_{w_i}^{\ne 1}(Z_i)$. 
        \end{itemize}
        (Again, ambiguities are resolved arbitrarily.)
        Under this construction, the event $X_t(B_{R/2})\ne Y_t(B_{R/2})$ implies the existence of a witness $(f_i,w_i,Z_i)_{i=0}^K$ for some $K$ such that 
        \begin{enumerate}
            \item $f_0\in E(B_{R/2})$ and $w_K \in \partial B_R$;
            \item $f_i\in \mathsf{CE}_{w_i}^{\ne 1}(Z_i)$ for all $i$;
            \item $w_{i-1} \in f_{i}$ for all $i$;
            \item the clock ring at time $t_{j_i}$ is at edge $f_i$.
        \end{enumerate}
        Notice that this construction is done \emph{backwards in time}, i.e., $t_{j_0}> t_{j_1} >\cdots >t_{j_K}$. See Figure~\ref{fig:witness-construction} for a depiction.

            \begin{figure}
        \centering
        \begin{tikzpicture}
         \node at (-5.5,0) {
        \includegraphics[scale=0.22]{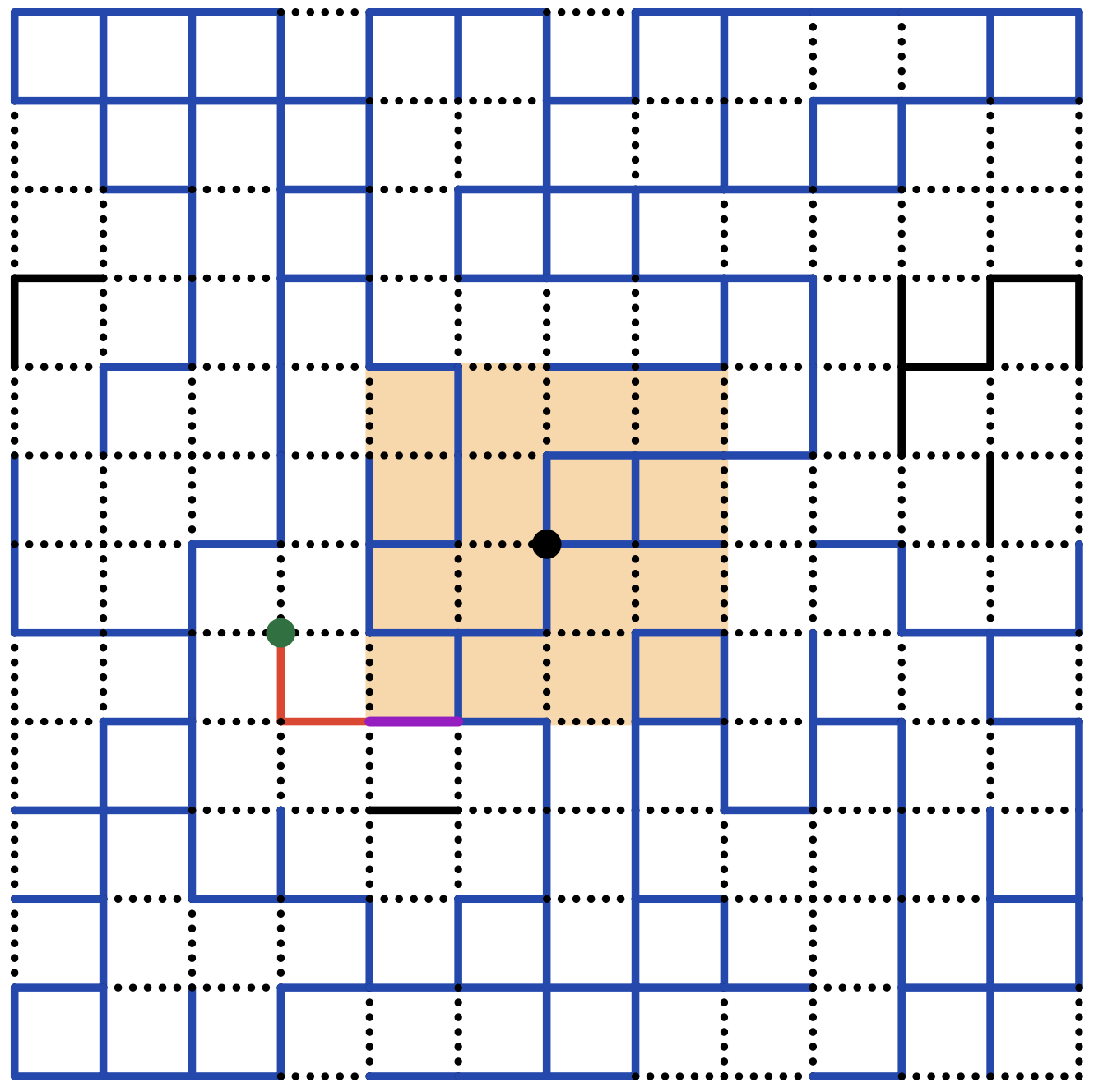}};
        \node[font = \tiny, color = green!50!black] at (-6.95,-.55) {$w_K$};
        \node[font=\tiny, color = blue!50!red] at (-6.11,-.98) {$f_K$};        
         \node at (0,0) {
        \includegraphics[scale=0.22]{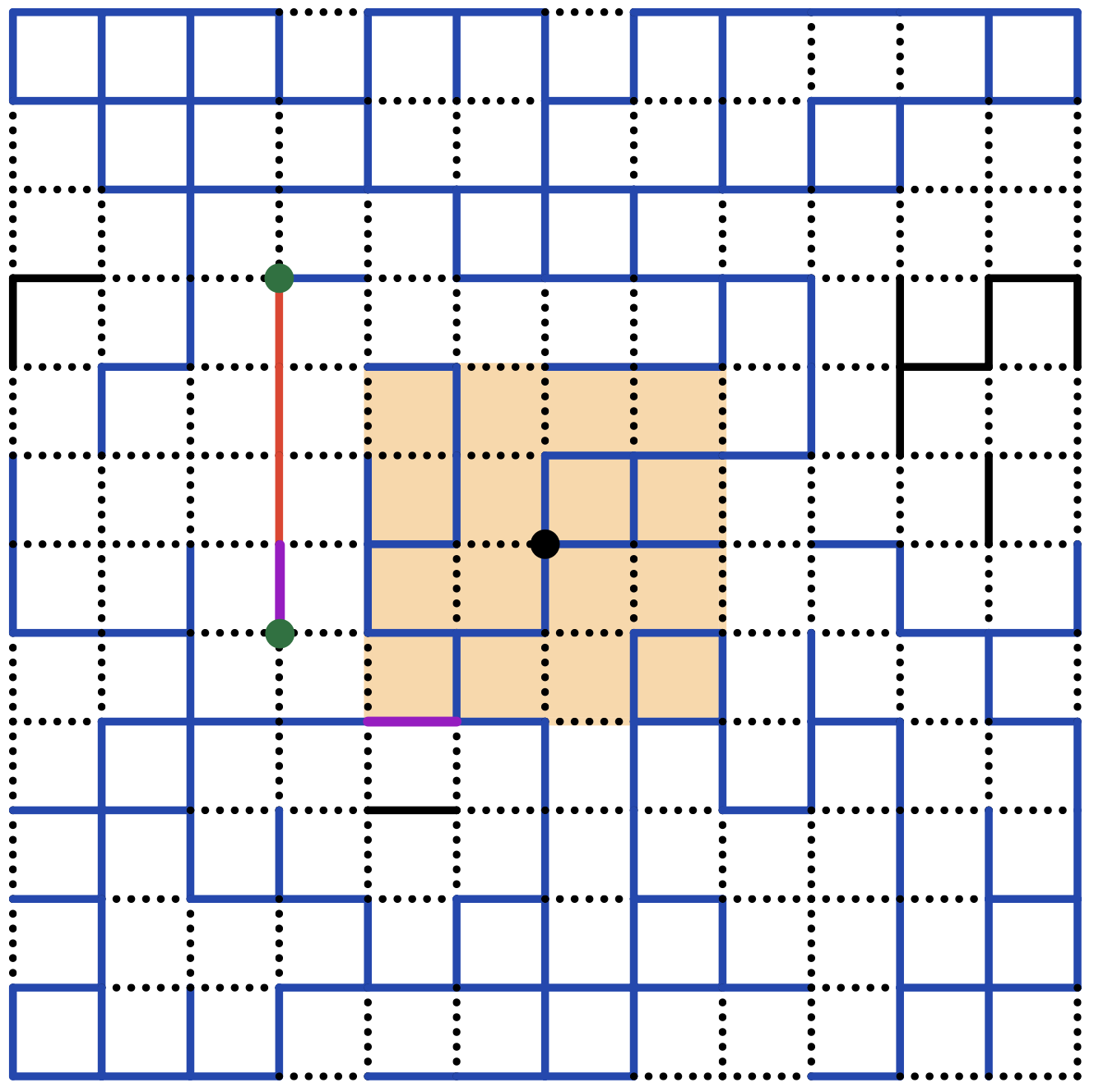}};
         \node[font = \tiny, color = green!50!black] at (-1.45,-.55) {$w_K$};
          \node[font = \tiny, color = green!50!black] at (-1.15,1.45) {$w_{K\text{-}1}$};
        \node[font=\tiny, color = blue!50!red] at (-0.61,-.98) {$f_K$};
        \node[font=\tiny, color = blue!50!red] at (-.99,-0.14) {$f_{K\text{-}1}$};
         \node at (5.5,0) {
        \includegraphics[scale=0.22]{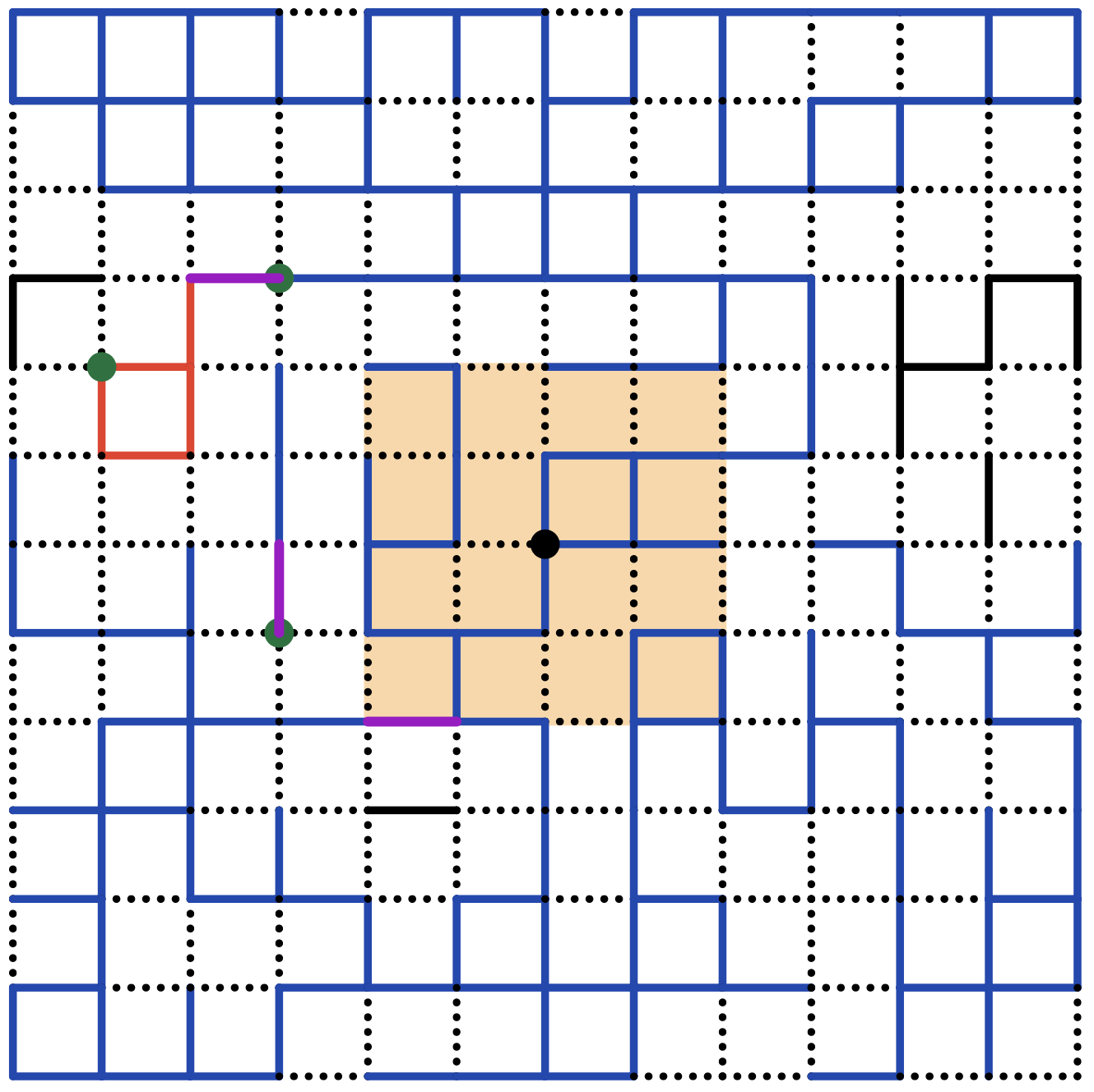}};
         \node[font = \tiny, color = green!50!black] at (4.05,-.55) {$w_K$};
          \node[font = \tiny, color = green!50!black] at (4.35,1.45) {$w_{K\text{-}1}$};
          \node[font = \tiny, color = green!50!black] at (3.3,1.05) {$w_{K\text{-}2}$};
        \node[font=\tiny, color = blue!50!red] at (4.89,-.98) {$f_K$};
        \node[font=\tiny, color = blue!50!red] at (4.51,-0.14) {$f_{K\text{-}1}$};
        \node[font=\tiny, color = blue!50!red] at (4.07,1.09) {$f_{K\text{-}2}$};
        \end{tikzpicture}
        \caption{Three steps of the construction of the witness are shown. The ball $B_{R/2}$ is the highlighted region. For each $i$, the edges of the finite-connectivity cluster of $w_i$ (green) to $f_i$ (purple) in $Z_{i}$ (blue and black edges) are depicted in red. Note that  the configuration changes from left to right, depicting the evolution of the dynamics (backwards in time).}
        \label{fig:witness-construction}
    \end{figure}

        We will show that the probability that there exists such a witness and the event $\mathcal E_{\ell,t}$ occurs satisfies the claimed bound. On the event $\mathcal E_{\ell,t}$, it must be the case that $K \ge R/(2\ell^{\alpha})$ since the distances between $f_{i}$ and $f_{i-1}$ are bounded by $\ell^{\alpha}$. Furthermore, for any witness $(f_i,w_i,Z_i)_{i=0}^K$, there is a projection, which we also call a witness, $(f_i,w_i,L_i)_{i=0}^K$ where the label $L_i\in \{X,Y\}$ indicates whether $Z_i = X_{t_{j_i}}^-$ or $Z_i = Y_{t_{j_i}^-}$.   

        The total number of clock rings in $G$ in $[0,t]$ has a $\text{Poisson}(t|E(G)|)$ distribution; let $M$ denote this quantity. Note that by standard Poisson concentration, we have 
        $
            \mathbb P(M \ge  4 t |E(G)|) \le \exp( - t |E(G)|)\,.
        $        
        Let us work on the event that $M\le 4t|E(G)|$. Let ${\mathcal T}= \{t_1,...,t_M\}$ be the sequence of clock ring times in $|E(G)|$. 
        We start by bounding $\mathbb P(  X_t(B_{R/2}) \ne Y_t(B_{R/2}),\mathcal E_{\ell,t})$ by 
        \begin{align*}
            e^{ - t|E(G)|} + \max_{M\le 4t|E(G)|}\max_{\mathcal T} \! \sum_{K \ge R/(2\ell^{\alpha})} 2^K \max_{L\in \{X,Y\}^K} \mathbb P\big(\exists (f_i,w_i)_{i=0}^{K} :  (f_i,w_i,L_i)_{i=0}^{K} \text{ is witness}, \cE_{\ell,t} \! \mid \! \mathcal T)\,,
        \end{align*}
         where the conditioning on ${\mathcal T}$ indicates conditioning on the clock ring times in $E(G)$ in $[0,t]$ being ${\mathcal T}$ (but importantly not revealing their location yet).

        Fix any $M\le 4t|E(G)|$ and any realization $\mathcal T$ and consider the probability on the right. 
        For any subset of times $J = \{j_K,...,j_0\}$, given ${\mathcal T}$, we denote by $W_{J,L}$ the event that there exist $(f_i,w_i)_{i=0}^{K}$ such that the triple $(f_i,w_i,L_i)_{i=0}^{K}$ is a witness and the clock ring at $f_i$ is at time $t_{j_{i}}$ for $i=0,\dots,K$. 
        There being $\binom{M}{K}$ choices of $J$, for any $L \in \{X,Y\}^K$:
        \begin{align*}
        \mathbb P\big(\exists (f_i,w_i)_{i=0}^{K} :  (f_i,w_i,L_i)_{i=0}^{K} \text{ is witness}, \cE_{\ell,t}\mid \mathcal T) \le 
            \max_{J}
            \binom{M}{K}  \mathbb P\big({W}_{J,L}, \cE_{\ell,t} \mid {\mathcal T}\big)\,.
        \end{align*}

        For $s\ge 0$, let $\mathcal F_{s}$ be the $\sigma$-algebra generated by the grand coupling up to time $s$. We will now sequentially
        condition on $\mathcal F_{t_{j_i}^-}$, and enumerate over the possible choices for the edge $f_i$ (of which there will be at most $\Delta \ell^\gamma$ per item (2) of Definition~\ref{def:E-l-t}), and then ask that the clock ring at time $t_{j_i}$ be at $f_i$. 

        More precisely, if for an edge $g$, ${\mathcal A}_l^{g}$ is the event that in the witness $f_l = g$, then we have for every $i< K$, 
        \begin{align}\label{eq:step-i-witness-bound}
            &\max_{g_{i+1},...,g_K}  \mathbb P\big(W_{J,L},\cE_{\ell,t} \mid {\mathcal T}, \mathcal F_{t_{j_i}^-}, ({\mathcal A}_{l}^{g_l})_{l>i}\big) \nonumber \\
            & \quad\le \max_{g_{i+1},...,g_K} \sum_{w_i\in g_{i+1}} \sum_{g_{i}\in \mathsf{CE}_{w_i}^{\ne 1}(Z_i)} \mathbb P( {\mathcal A}_i^{g_i} \mid {\mathcal T}, \cF_{t_{j_i}^-}, ({\mathcal A}_l^{g_l})_{l>i}) \mathbb P(W_{J,L}, \cE_{\ell,t} \mid {\mathcal T}, \cF_{t_{j_i}^-}, ({\mathcal A}_l^{g_l})_{l\ge i}) \nonumber \\
            & \quad \le \frac{2\Delta\ell^{\gamma}}{|E(G)|} \cdot \max_{g_i,...,g_K} \mathbb P\big(W_{J,L},\cE_{\ell,t} \mid {\mathcal T}, \mathcal F_{t_{j_i}^-}, ({\mathcal A}_{l}^{g_l})_{l\ge i}\big)\,.
        \end{align}
        In the above, and in what follows in this proof, when conditioning on a $\sigma$-algebra, we mean the inequalities to hold almost surely, i.e., for almost surely every realization of the random variables generating the $\sigma$-algebra.
        Here, the first inequality is a union bound over the potential choices of $w_i$ and then the potential choices of $g_i$ in the witness.
        (Notice that conditional on $\mathcal F_{t_{j_K}^-}$, the configuration $Z_K$ can be read-off from its label $L_K$.) For the second inequality, we used the definition of $\mathcal E_{\ell,t}$ to bound the number of summands in the second line by $2\Delta\ell^\gamma$, and we used the fact that conditionally on ${\mathcal T}$, the locations of the clock rings are independent and uniform on $E(G)$, so given ${\mathcal T}$ and $\mathcal F_{t_{j_i}^-}$, the probability that the clock ring at time $t_{j_i}$ is at a fixed edge $g_i$ is $1/|E(G)|$.
        
        We can condition the right-hand side of~\eqref{eq:step-i-witness-bound} further on $\mathcal F_{t_{j_{i-1}}^-}$ (recalling that $t_{j_{i}}<t_{j_{i-1}}$) to arrive at the following relation between the probabilities for index $i$ and index $i-1$:
        \begin{align*}
            \max_{g_{i+1},...,g_K} \mathbb P\big(W_{J,L},\cE_{\ell,t} \mid {\mathcal T}, \mathcal F_{t_{j_i}^-}, ({\mathcal A}_{l}^{g_l})_{l>i}\big) \le \frac{2\Delta\ell^{\gamma}}{|E(G)|} \cdot \max_{g_i,...,g_K} \mathbb P\big(W_{J,L},\cE_{\ell,t} \mid {\mathcal T}, \mathcal F_{t_{j_{i-1}}^-}, ({\mathcal A}_{l}^{g_l})_{l\ge i}\big).
        \end{align*}
        The same inequality holds for $i=K$ with an extra multiplicative factor of $|\partial B_R|$ for the initial choice of $w_K$. 
        Iterating this over all $i$, we arrive at the following bound on $\mathbb P(X_t(B_{R/2})\ne Y_t(B_{R/2}),\mathcal E_{\ell,t})$:
        \begin{align*}
            |\partial B_R| \!\sum_{K\ge R/(2\ell^{\alpha})} \!\! & 2^K \!\! \max_{M\le 4t|E(G)|} \binom{M}{K} \Big(\frac{2\Delta\ell^{\gamma}}{|E(G)|}\Big)^K + e^{-t|E(G)|}  \le |\partial B_R| \! \sum_{K\ge R/(2\ell^{\alpha})} \! \Big(\frac{8 \Delta \ell^{\gamma} e t}{K}\Big)^K + e^{- t|E(G)|}\,.
        \end{align*}
        At this stage, we see that if $t \le \epsilon R/(16\Delta e\ell^{\alpha+ \gamma})$ for $\epsilon<1/e$, then this is at most 
        \begin{align*}
            |\partial B_R| \epsilon^{R/(2\ell^{\alpha})} \sum_{j \ge 0}\epsilon^j + e^{-t|E(G)|} \le C |\partial B_R| e^{-R/(2\ell^{\alpha})}  + e^{ - t|E(G)|}\,.
        \end{align*}
        The term $e^{ - t|E(G)|}$ is absorbed since we have $R\le \diam(G)\le E(G)$ trivially. 
    \end{proof}

    \begin{remark}\label{rem:disagreement-percolation-proof-difficulties}
        Beyond the low-temperature construction of the disagreement region, we point out a subtlety in the above that may have gone unnoticed.  
        In disagreement percolation bounds for high-temperature FK dynamics (e.g., in~\cite{BS}), the typical analog of $\mathcal E_{\ell,\alpha,\gamma}$ is simply that the largest cluster in $B_R$ has volume at most $\ell$. When counting the number of possible witnesses, one takes $|B_\ell(w_i)|$ as a worst-case bound for the number of locations of the next disagreement along the chain in the witness. If the volume growth is stretched exponential, however, this does not work. The careful conditioning in~\eqref{eq:step-i-witness-bound} was essential to only count those edges $\mathsf{CE}_{w_i}^{\ne 1}$ that could be vulnerable to be the next edge in the witness, rather than the entire volume of a ball, keeping the count to a polynomial even in the presence of exponential volume growth. 
    \end{remark}

\section{Fast mixing of FK dynamics on graphs of sub-exponential growth}\label{sec:fast-mixing-subexp}

With the bound on the speed of information propagation at low temperatures from the previous section on hand, we proceed to establish Theorem~\ref{thm:subexp-mixing}. The event $\mathcal E_{\ell,\alpha,\gamma}$ from Definition~\ref{def:E-l-t} was crucial to controlling the speed of disagreement propagation, and our first aim (Section~\ref{subsec:burn-in-subexp}) is to establish that after an $O(1)$ time burn-in period, $\mathcal E_{\ell, \alpha,\gamma}$ holds for a further $O(1)$ amount of time. Then in Section~\ref{subsec:proof-subexp-mixing}, we will build a space-time recursion to establish the desired mixing time bound. 

\subsection{Dominating edge-percolation after a burn-in period}
    We start with a simple estimate showing that after an $O(1)$ (continuous-time) burn-in period, the FK dynamics started from any initialization stochastically dominates the edge-percolation at a parameter arbitrarily close to $\ps = \frac{p}{q(1-p)+p}$. This will be crucial to many of our arguments throughout the paper. 
    
    \begin{lemma}\label{lem:domination-after-burn-in}
    Fix $p,q,\delta$. There exists $T_0(\delta)$ such that $X_{t}^\zero$ stochastically dominates $\pi_{\ps -\delta,1}$ for all $t\ge T_0$. This also holds conditioned on $\mathscr{T}_{[T_0,\infty)}$ (the $\sigma$-algebra generated by the clock rings from time $T_0$ on).  
    \end{lemma}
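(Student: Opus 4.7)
The plan is to dominate the FK dynamics from below by the Glauber dynamics for independent Bernoulli edge-percolation at parameter $\ps$, and then read off marginals from the product structure of the latter. The key observation motivating this is that for $q\ge 1$ one has $\ps\le p$, so by the FK update rule~\eqref{eq:FK dynamics-update-rule}, an edge is set to $1$ with probability at least $\ps$ regardless of its cut-edge status.

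First I would set up the grand coupling of Section~\ref{subsec:coupling} and compare $X_t^{\zero}$ to a second chain $Z_t^{\zero}$ driven by the same Poisson clocks and uniforms, in which at each ring on edge $e$ with uniform $U$ one sets $Z(e)=1$ iff $U\le \ps$. Since the threshold governing $X(e)$ at every ring is either $p$ or $\ps$, both of which are $\ge \ps$, one obtains pointwise $X_t^{\zero}(e)\ge Z_t^{\zero}(e)$ for all edges $e$ and all times $t$. The chain $Z^{\zero}$ is a product over edges of independent two-state Markov chains; a direct computation on a single edge shows that, starting from $0$, the marginal of $Z_t^{\zero}(e)$ equals $\ps(1-e^{-t})$.

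Next, I would pick $T_0 = T_0(\delta)$ (concretely, $T_0 \ge \log(\ps/\delta)$ suffices) so that $\ps(1-e^{-T_0})\ge \ps-\delta$. Then for every $t\ge T_0$, $Z_t^{\zero}$ is a product of i.i.d.\ Bernoullis with parameter $\ge \ps-\delta$, hence stochastically dominates $\pi_{\ps-\delta,1}$. Combined with the pathwise inequality $X_t^{\zero}\ge Z_t^{\zero}$, this yields the unconditional claim.

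The only mildly delicate step is the conditional version. Here the point to verify is that both the product structure and the marginal lower bound for $Z_t^{\zero}$ survive conditioning on $\mathscr{T}_{[T_0,\infty)}$. Since the Poisson clocks on disjoint time intervals are independent, and each edge of $Z$ evolves autonomously, I would argue edge by edge: if $e$ is rung at least once in $[T_0,t]$, the strong Markov property at its last ring in that interval gives conditional marginal exactly $\ps$, while if no ring occurs on $e$ in $[T_0,t]$ then $Z_t^{\zero}(e)=Z_{T_0}^{\zero}(e)$, whose law is Bernoulli$(\ps(1-e^{-T_0}))\ge \ps-\delta$ and is independent of $\mathscr{T}_{[T_0,\infty)}$. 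Edges remain conditionally independent because the generating clocks and uniforms are mutually independent across edges. The pathwise inequality $X_t^{\zero}\ge Z_t^{\zero}$ is unaffected by conditioning, so the conditional domination follows. I do not expect a serious obstacle here; the main care is to keep track of which randomness the conditioning event sees.
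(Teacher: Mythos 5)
Your proposal is correct and follows the same underlying mechanism as the paper's proof: since $\ps\le p$ for $q\ge 1$, the FK update rule opens an edge with probability at least $\ps$ at every ring regardless of cut-edge status, so once an edge has rung its state dominates $\mathrm{Ber}(\ps)$, and $T_0$ is chosen to make the probability of having never rung at most $\delta/\ps$. Your explicit construction of the auxiliary product chain $Z_t$ coupled monotonically below $X_t$ is a slightly more spelled-out way of making the per-edge marginal domination into a joint domination (and carrying it through the conditioning on $\mathscr{T}_{[T_0,\infty)}$), but it is the same argument in substance.
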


    \begin{proof}
    Consider any edge $e$. Uniformly over all the possible randomness (Poisson clocks and uniform random variables) on edges of $E(G)\setminus \{e\}$, as well as all clock rings at $e$ after time $T_0$ (a to be determined constant depending only on $\delta$), on the event that the clock at $e$ has rung by time $t$, its distribution stochastically dominates $\text{Ber}(\ps)$ per~\eqref{eq:FK dynamics-update-rule}. The result follows if we let $T_0$ be large enough that the probability that the clock at $e$ has not rung by time $T_0$ is less than $\delta$ (this is independent of the clock rings at $e$ after $T_0$). 
    \end{proof}

    We consistently use the notation 
    $\widetilde \omega$ for independent edge-percolation processes on $E(G)$ with parameter $\tilde p$, i.e., $\widetilde \omega \sim \pi_{G,\tilde p,1}$. For ease of notation, we simply write $\widetilde \pi_G$ for the law of $\widetilde \omega$ on $G$. 

\subsection{Burnt-in FK dynamics are in {$\mathcal E_\ell$}}\label{subsec:burn-in-subexp}
Our next aim is to show that burnt-in FK dynamics configurations are in $\mathcal E_{\ell,\alpha,\gamma}$ with high probability. Recall the main assumption on our underlying graphs for Theorem~\ref{thm:subexp-mixing}, that the independent percolation on them has a strongly supercritical phase: Definition~\ref{def:supercritical-Bernoulli-perc}.

 Recall the event $\mathcal E_{\ell,\alpha,\gamma}$ from Definition~\ref{def:E-l-t} that governed the size of regions through which disagreements could possibly spread. In what follows, we fix $\delta>0$ given to us by Definition~\ref{def:supercritical-Bernoulli-perc}, and let 
 \begin{align}\label{eq:alpha-gamma-choices}
     \mathcal E_\ell := \mathcal E_{\ell,(1+\delta)/\delta,(1+\delta)/\delta}\,, \qquad \text{i.e., $\alpha = \gamma = (1+\delta)/\delta$}\,.
 \end{align}
 For ease of notation, we use $\alpha = (1+\delta)/\delta$ in the below. We continue to imagine a fixed vertex $o\in V(G)$, and fixed $R$ large, but independent of the graph, and let $B_R = B_R(o)$; events and sets from the previous section are all defined with respect to this ball. Finally, $\ell,R$ can be assumed to be sufficiently large (depending on $\eta,\delta)$. Our main result in this subsection is the following.

\begin{proposition}\label{prop:burnt-in-is-good}
    Suppose $G$ satisfies Definition~\ref{def:supercritical-Bernoulli-perc} and has $\eta$-stretched exponential growth for $\eta$ less than some $\eta_0(\delta)$. There exists $T_0(\delta,\eta,q)$ such that for every initial configuration $\omega_0$ and every $t\ge T_0$, 
    \begin{align*}
        \mathbb P\Big(\bigcup_{s = t}^{2t} \{X_s^{\omega_0} \notin \mathcal E_{\ell}\}\Big) \le C t e^{R^\eta} e^{- \ell}\,.
    \end{align*}
\end{proposition}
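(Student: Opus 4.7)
I reduce the event $\mathcal{E}_\ell^c$ to the existence of a large, (almost) isolated connected subset of $G$ in the FK configuration, bound the probability of such a set via the strongly supercritical phase, and then union-bound over vertices in $B_R$ and Poisson clock rings in $[t,2t]$.

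Applying Lemma~\ref{lem:domination-after-burn-in} with $\delta'$ small enough that $\tilde{p} := \ps - \delta'$ exceeds the threshold $\tilde p$ in the assumption on $G$ (which requires $p$ close to $1$, depending on $q$ and $\delta$), I obtain $T_0 = T_0(\delta', q)$ such that $X_s^{\omega_0}$ stochastically dominates $\widetilde\omega \sim \widetilde\pi_G$ for every $s \ge T_0$. Since the event ``there exists a connected $A \ni v$ in $G$ with $\ell \le |A| \le n/2$ and $\omega(\partial_e A) = 0$'' is decreasing in $\omega$, the stretched-exponential bound of Definition~\ref{def:supercritical-Bernoulli-perc} transfers from $\widetilde\omega$ to $X_s$ at every $s \ge T_0$.

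With $\alpha = \gamma = (1+\delta)/\delta$ chosen so that $\alpha \cdot \delta/(1+\delta) = 1$, both conditions defining $\mathcal{E}_\ell$ follow from the uniform size bound $|\mathcal{C}_v^{\ne 1}(\omega \setminus e)| \le \ell^\alpha$ over $v \in B_R$, $e \in E(B_R)$, since diameter is dominated by size and $|\mathsf{CE}_v^{\ne 1}(\omega)| \le \Delta\, |\mathcal{C}_v^{\ne 1}(\omega \setminus e)|$. A case analysis on whether $v \in \mathcal{C}_1(\omega \setminus e)$ reduces a violation of this size bound to the existence of a connected set $A \ni u$ with $u \in B_{R+1}$, $|A| \ge \ell^\alpha/(2\Delta)$ (and $|A|\le n/2$ since $A$ is non-giant), and $\omega(\partial_e A) \subseteq \{e\}$ for some $e \in E(B_R)$: if $v \notin \mathcal{C}_1(\omega\setminus e)$, take $A := \mathcal{C}_v^{\ne 1}(\omega\setminus e)$, which is itself an $(\omega\setminus e)$-component; if $v \in \mathcal{C}_1(\omega\setminus e)$, one checks that $\mathcal{C}_v^{\ne 1}(\omega \setminus e) = \{v\} \cup \bigcup_i K_i$ for at most $\Delta$ distinct non-giant $(\omega\setminus e)$-components $K_i$ attached to $v$ via its non-giant neighbors, so some $|K_i| \ge \ell^\alpha/(2\Delta)$ and we take $A := K_i$. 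The slack from $\omega(\partial_e A) \subseteq \{e\}$ (instead of $= \emptyset$) is handled either by extending $A$ through the extra edge to the $\omega$-component of its other endpoint (which must remain non-giant when $v \notin \mathcal{C}_1$, giving a genuine $\omega(\partial_e A') = 0$) or by a union bound over $e \in E(B_R)$ combined with conditioning on $\omega(e)=0$, at a constant cost $1/(1-\tilde p)$. Union-bounding over $u \in B_{R+1}$, whose volume is $\le Ce^{R^\eta}$, yields $\Pr(X_s \notin \mathcal{E}_\ell) \le Ce^{R^\eta}e^{-c\ell}$ at each fixed $s \ge T_0$; the assumption $\eta \le \eta_0(\delta)$ keeps this volume factor controlled.

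To extend from a fixed $s$ to all $s \in [t,2t]$, I union-bound over those Poisson clock rings that can affect the bad event. Partitioning candidate $|A|$ dyadically as $[2^k L, 2^{k+1} L]$ with $L = \ell^\alpha/(2\Delta)$, each such $A$ lies in a ball of radius $O(2^k L)$ around a vertex of $B_{R+1}$, of volume $\le e^{(R+O(2^k L))^\eta}$. By Poisson concentration, the number of clock rings in this ball during $[t,2t]$ is at most $Cte^{(R+O(2^kL))^\eta}$ except with probability $e^{-\Omega(t)}$, and by the previous paragraph the probability of the bad event at any particular time on scale $2^k L$ is at most $e^{-2^{k\delta/(1+\delta)}\ell}$. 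Summing over rings and dyadic scales, and using $\eta \le \eta_0(\delta)$ to ensure the tail in $\ell$ dominates the volume growth uniformly in $k$, yields the target bound $Cte^{R^\eta}e^{-\ell}$, after absorbing constants into the definition of $\ell$. The main obstacles I anticipate are the combinatorial handling of case~(b) (the decomposition of $\mathcal{C}_v^{\ne 1}$ on the giant into at most $\Delta$ non-giant pieces) and the need to keep the time union bound \emph{local} — via the dyadic size decomposition restricting to thickenings of $B_R$ — rather than running it over all of $E(G)$, which would contribute a prohibitive factor of $n$.
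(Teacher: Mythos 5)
Your overall strategy matches the paper's: burn in via Lemma~\ref{lem:domination-after-burn-in}, replace $\mathcal E_\ell^c$ by a monotone ``large connected set with (almost) closed edge-boundary'' event, bound that event under $\widetilde\pi$ using Definition~\ref{def:supercritical-Bernoulli-perc} with a Peierls factor for the one exceptional boundary edge, and then make the union bound over clock rings in $[t,2t]$ \emph{local}. Your organization of the time union bound --- stratifying by the dyadic size of the bad set, so that the event at scale $2^kL$ is measurable with respect to $\omega(E(B_{R+O(2^kL)}))$ and only clock rings in that ball can toggle it --- is a legitimate and arguably cleaner alternative to the paper's device (Lemma~\ref{lem:far-away-update-influence} and the two-exceptional-edge events $\bar\cG_m$), and the resulting sum over scales converges under the same condition $\eta<\eta_0(\delta)$.

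There is, however, a genuine gap in your reduction of condition~(2) of $\mathcal E_\ell$ (the bound on $|\mathsf{CE}_v^{\ne 1}(\omega)|$). First, your decomposition in the case $v\in\mathcal C_1(\omega\setminus e)$ is based on a misreading of the definition: since every open edge incident to a vertex of the giant is an edge \emph{of} the giant, removing $E(\mathcal C_1(\omega\setminus e))$ removes all open edges at $v$, so $\mathcal C_v^{\ne 1}(\omega\setminus e)=\{v\}$ exactly --- there are no components $K_i$ ``attached via non-giant neighbors.'' More importantly, the inequality $|\mathsf{CE}_v^{\ne 1}(\omega)|\le\Delta\,|\mathcal C_v^{\ne 1}(\omega\setminus e)|$ does not typecheck as stated: the left-hand side aggregates over \emph{all} $e\in E(B_R)$, and each such $e$ is incident to a \emph{different} finite component $\mathcal C_v^{\ne 1}(\omega\setminus e)$, computed in a different configuration $\omega\setminus e$. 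To conclude, one must exhibit a \emph{single} cut-edge $e_\star$ and set $A$ with $\omega(\partial_e A\setminus\{e_\star\})\equiv 0$, $|A|\le n/2$, and $\Delta|A|\gtrsim|\mathsf{CE}_v^{\ne 1}(\omega)|$. When $v$ lies in the giant of $\omega$, this requires showing that the relevant cut-edges are nested (they all separate $v$ from the $2$-connected core) and choosing $e_\star$ to be the outermost one whose removal leaves $v$ in a component of size at most $n/2$ --- the edges beyond it leave $v$ in the giant and hence do not contribute to $\mathsf{CE}_v^{\ne 1}$ except through the $\le\Delta$ edges at $v$ itself. This is exactly the content of the paper's Lemma~\ref{lem:E-ell-G-ell}, proved via the cut-edge tree $T_v$ and its restriction $T_v'$; your proposal asserts the conclusion but the case analysis you give does not establish it.
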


\smallskip
\noindent \textbf{Proof strategy.} Establishing Proposition~\ref{prop:burnt-in-is-good} is quite a bit more involved than it would be in a high-temperature setting (i.e., for small $p$). This is due both to the delicate graph-theoretic aspects of the event $\mathcal E_\ell$, its non-monotonicity, and its non-locality. In particular, the last one means that in the time interval $[t,2t]$ the number of edge updates that could hypothetically cause the FK dynamics to leave $\mathcal E_\ell$ are $t|E(G)|$, rather than $t|B_R|$; a naive union bound over this number would fail. 
We outline our strategy as follows:
\begin{enumerate}
    \item In Definition~\ref{def:G-l} below, we define a proxy event $\mathcal G_\ell$ which is monotone, ensures that $\mathcal E_\ell$ occurs, and is more ``local" than $\mathcal E_\ell$. The relationship to $\mathcal E_\ell$ under minimal assumptions on $G$ is in Lemma~\ref{lem:E-ell-G-ell}.  
    \item Using Definition~\ref{def:supercritical-Bernoulli-perc}, we will show in Lemma~\ref{lem:G-l-high-probability} that $\mathcal G_\ell$ holds with high probability for independent edge-percolation on $G$ with a large enough parameter $\tilde p$. Since $\mathcal G_\ell$ is monotone, we can translate this bound to the FK dynamics after an $O(1)$ burn-in time per Lemma~\ref{lem:domination-after-burn-in}. 
    \item We then perform a careful ``union bound" over the update times between $s\in [t,2t]$. This could be a problem since there are order $|E(G)|$ many updates in this time interval, whereas the probability of $\mathcal G_\ell^c$ is only exponentially small in the local quantity $\ell$. Importantly, though, we use that the event $\mathcal G_\ell$ is ``localized" to reason that far away edge updates are unlikely to induce a change in $\mathcal G_\ell^c$, in a summable manner. This argument is executed via Lemma~\ref{lem:far-away-update-influence} in the proof of Proposition~\ref{prop:burnt-in-is-good}. 
\end{enumerate}

Let us begin by defining the proxy event $\mathcal G_\ell$ and its variant $\bar \cG_{m}$ for $m\ge \ell$. 

\begin{definition}\label{def:G-l}
    Define the event $\mathcal G_\ell$ as the event that there does not exist a connected set $A$ intersecting $B_R$ having $\ell^{\alpha} \le |A|\le n/2$, and an edge $e\in \partial_e A$ such that $\omega(\partial_e A \setminus \{e\}) \equiv 0$. 

    Define the event $\bar \cG_{m}$ as the event that there does not exist a connected set $A$ intersecting $B_R$, of size $m^{\alpha}\le |A|\le n/2$, and a pair of edges $e_1,e_2\in \partial_e A$ such that $\omega(\partial_e A \setminus \{e_1,e_2\}) \equiv 0$. 
\end{definition}
Notice that, unlike $\mathcal E_\ell^c$, the event $\mathcal G_\ell^c$ is a monotone decreasing event, since it is the union (over $A,e$) of decreasing events. Similarly, the event $\bar \cG^c_{m}$ is a decreasing event.

The following graph theoretic lemma demonstrates that $\mathcal G_\ell$ controls $\mathcal E_\ell$. It is important here to relate the number of cut-edges in the carefully constructed set of vulnerable edges in the disagreement percolation $\mathsf{CE}_v^{\ne 1}$ to an easier quantity: the volume of a set of size smaller than $n/2$ with closed boundary. 

\begin{lemma}\label{lem:E-ell-G-ell}
    The event $\mathcal E_\ell^c$ is a subset of the event $\mathcal 
    G_\ell^c$. 
\end{lemma}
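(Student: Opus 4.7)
The plan is to produce a deterministic construction of the required set $A$ from a configuration $\omega$ violating $\cE_\ell$. In both cases of Definition~\ref{def:E-l-t}, the candidate $A$ will be a finite component of $\omega$, possibly with one distinguished edge deleted. Suppose first that condition~(1) fails, so $\diam(\cC_v^{\neq 1}(\omega\setminus e^*))>\ell^\alpha$ for some $v\in B_R$ and $e^*\in E(B_R)$. Since this diameter is positive, $v$ cannot lie in $\cC_1(\omega\setminus e^*)$ (else the finite component would be $\{v\}$), so $A:=\cC_v(\omega\setminus e^*)$ is a genuine non-giant component of $\omega\setminus e^*$. Then $A$ is connected, contains $v\in B_R$, satisfies $|A|>\ell^\alpha$, and has $|A|\le n/2$ by comparison with the strictly larger $\cC_1(\omega\setminus e^*)$. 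Moreover $\omega(\partial_e A)\subseteq\{e^*\}$, since $A$ is a component of $\omega\setminus e^*$ and no open edge of $\omega\setminus e^*$ crosses $\partial_e A$; taking $e=e^*$ (or any $e\in\partial_e A$ if $e^*\notin\partial_e A$) closes this case.

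Next suppose that condition~(2) fails, so $|\mathsf{CE}_v^{\neq 1}(\omega)|>\Delta\ell^\alpha$ for some $v\in B_R$. If $v\notin\cC_1(\omega)$, then for any $e\notin\cC_v(\omega)$ we have $\cC_v^{\neq 1}(\omega\setminus e)=\cC_v(\omega)$, which is a full component of $\omega$ and is therefore disjoint from $e$, forcing $e\notin\mathsf{CE}_v^{\neq 1}(\omega)$; hence every cut-edge in $\mathsf{CE}_v^{\neq 1}(\omega)$ lies inside $\cC_v(\omega)$. The degree bound $|\mathsf{CE}_v^{\neq 1}(\omega)|\le\Delta|\cC_v(\omega)|$ then gives $|\cC_v(\omega)|>\ell^\alpha$, and $A=\cC_v(\omega)$ works with $\omega(\partial_e A)=\emptyset$. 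The substantive sub-case is $v\in\cC_1(\omega)$, where $\cC_v^{\neq 1}(\omega)=\{v\}$, and $\mathsf{CE}_v^{\neq 1}(\omega)$ consists of at most $\Delta$ edges adjacent to $v$ together with ``separating'' cut-edges $f$ of $\cC_1(\omega)$ whose removal puts $v$ into the smaller piece $A_f:=\cC_v(\omega\setminus f)$. Thus there are more than $\Delta\ell^\alpha-\Delta$ separating edges. I would work in the block tree $T$ of $\cC_1(\omega)$ (vertices = $2$-edge-connected components, edges = cut-edges), let $B_v$ denote the block of $v$, write $T_v(f)$ for the subtree of $T\setminus\{f\}$ containing $B_v$, and pick $f^*$ maximizing $|A_{f^*}|$ among separating edges. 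A brief tree argument (if some separating $f\neq f^*$ were not in $T_v(f^*)$, then $T_v(f)\supsetneq T_v(f^*)$, contradicting maximality of $|A_{f^*}|$) shows that every separating edge lies in $T_v(f^*)\cup\{f^*\}$, so their number is at most $|V(T_v(f^*))|\le|A_{f^*}|$. This gives $|A_{f^*}|>\Delta\ell^\alpha-\Delta\ge\ell^\alpha$ (using $\Delta\ge 3$ and $\ell$ large), and $A_{f^*}$ is connected, meets $B_R$, has $|A_{f^*}|\le n/2$ by the type-(ii) property, and satisfies $\omega(\partial_e A_{f^*})\subseteq\{f^*\}$.

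The hard part will be the sub-case $v\in\cC_1(\omega)$ of condition~(2). A naive candidate like $\bigcup_f A_f$ can easily exceed $n/2$ and fails to have an almost-closed $\omega$-boundary, so a single ``frontier'' $A_{f^*}$ must be extracted. The key inequality is bounding the number of separating cut-edges by $|V(T_v(f^*))|\le|A_{f^*}|$ rather than by the weaker $\Delta|A_{f^*}|$ that would follow from a naive degree argument; this requires careful use of the block-tree structure and the maximality of $f^*$, and is the step I expect to require the most care.
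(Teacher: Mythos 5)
Your proposal is correct, and both halves of the case analysis arrive at the right conclusion, but the route for condition~(2) differs somewhat from the paper's. The paper does not split on whether $v\in\cC_1(\omega)$; instead it forms the (larger) set $\mathbf{e}$ of \emph{all} cut-edges $e$ with $e\sim\cC_v(\omega\setminus e)$, organizes $\{v\}\cup\mathbf{e}$ into a rooted tree $T_v$ where descendancy is governed by which cut-edges screen $v$ from which, observes that $\mathbf{e}'=\{e\in\mathbf{e}:|\cC_v(\omega\setminus e)|\le n/2\}$ is a subtree $T_v'$ containing the root, and selects $e_\star$ on the boundary of $T_v'$. You instead pass directly to the bridge/block tree of $\cC_1(\omega)$ after peeling off the $\le\Delta$ edges incident to $v$, and choose $f^*$ by maximizing $|A_{f^*}|$ among separating edges; the tree argument then shows all separating edges live in $T_v(f^*)\cup\{f^*\}$, giving the count. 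Both arguments are ultimately extremal selections along a tree of cut-edges, and both yield $\omega(\partial_e A\setminus\{e\})\equiv 0$ with $|A|\le n/2$ by the same disjointness-with-the-new-giant reasoning. One small advantage of your bookkeeping: you explicitly separate out the $\le\Delta$ edges incident to $v$, whereas the paper asserts $\mathbf{e}'\supset\mathsf{CE}_v^{\ne 1}$, which can in fact fail for up to $\Delta$ edges incident to $v$ (those for which $\cC_v(\omega\setminus e)$ is the giant of $\omega\setminus e$ with size $>n/2$); that slack is harmless for the final inequality, and your treatment makes the slack explicit rather than leaving it implicit. A couple of points you might tighten for a clean write-up: (a) you should state that $|A|\le n/2$ in the first sub-case of condition~(2) as well ($\cC_v(\omega)$ disjoint from the giant and no larger than it), and (b) your bound $|V(T_v(f^*))|\le|A_{f^*}|$ silently uses that $2$-edge-connected components partition the vertex set, which is true but worth a word since it distinguishes the bridge tree from the block-cut tree.
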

\begin{proof}
    On the complement of item (1) in Definition~\ref{def:E-l-t}, there exists $v\in B_R$ and $e\in E(B_R)$ such that $\diam(\mathcal C_v^{\ne 1}(\omega\setminus e))\ge \ell^{\alpha}$. Letting $A = \mathcal C_v^{\ne 1}(\omega\setminus e)$, we notice that $|A|\ge \diam(A) \ge \ell^{\alpha}$. At the same time, $|A|\le n/2$ since if $|A|\ge n/2$ then $\mathcal C_v(\omega\setminus e) = \mathcal C_1(\omega\setminus e)$ and $\mathcal C_v^{\ne 1}(\omega \setminus e)$ would be the trivial $\{v\}$. Finally $A$ intersects $B_R$ since it contains $v\in B_R$, and under $\omega$, all of $\partial_e A\setminus \{e\}$ must be closed since $A$ is a connected component of $\omega \setminus e$. 

    We now show that the complement of item (2) in Definition~\ref{def:E-l-t} also implies $\mathcal G_\ell^c$. The essence of the argument is that $\mathsf{CE}_v^{\ne 1}$ lower bounds the size of $\mathcal C_v^{\ne 1}(\omega\setminus e)$ for some $e$, but a little care must be taken due to the definition of $\mathsf{CE}_v^{\ne 1}$. 
    We begin by constructing a tree from the set of all $e\in \text{Cutedge}(\omega)$ that are incident to $\mathcal C_v(\omega\setminus e)$; note this is a larger set than those $e\in \text{Cutedge}(\omega)$ that have $e\sim \mathcal C_v^{\ne 1}(\omega\setminus e)$, but we will subsequently restrict to this latter set. 
     Suppose $\mathbf e$ is the set of all edges $e$ in $\text{Cutedge}(\omega)$ such that $e\sim \mathcal C_v(\omega\setminus e)$.  
     We iteratively associate a tree $T_v= T_v(\omega)$ to $\{v\} \cup \mathbf e$ in the following natural way. 
         \begin{enumerate}
         \item Identify the root of the tree with the vertex $v$; 
         \item All cut-edges in $\mathbf{e}$ are associated to descendants of the root. 
        \item For a vertex $w$ of the tree (asssociated to a cut-edge $e_w$ in $\mathbf{e}$), a cut-edge $f\in \mathbf{e}$ is associated to a descendant of $w$ if and only if it is disconnected from $v$ by $e_w$ in $\omega$. 
         \end{enumerate}
         This process uniquely determines the tree since the children of $w_{i,j}$ are those descendants of $w_{i,j}$ that are not descendants of any of $w_{i,j}$'s other descendants. The fact that all of these are cut-edges also ensures that no cycles arise in the construction. Notice that the leaves of $T_v$ are exactly $\partial_e \mathcal C_v(\omega)$. 

         We next claim that the edges in
         $$\mathbf{e}' = \{e\in \mathbf{e}: |\mathcal C_v(\omega\setminus e)|\le n/2\}\,,$$
         are a sub-tree of $T_v$. By definition, an edge $e$ can only be in $\mathbf{e}$ but not in $\mathbf{e}'$ if the component $|\mathcal C_v(\omega\setminus e)| >n/2$. If this occurs for an edge $e$ associated to vertex $w$ in the tree, any edge $f$ associated to a descendant of $w$ will also have $|\mathcal C_v(\omega\setminus f)| > n/2$ and not be in $\mathbf{e}'$ since the difference in the component structures of $\omega\setminus e$ and $\omega\setminus f$ is that the latter has a larger $\mathcal C_v$ and one other component is correspondingly smaller. Therefore, the event that an edge is in $\mathbf{e}$ but not in $\mathbf{e}'$ is a decreasing event on the tree $T_v$. As such the restriction of $T_v$ to $\{v\}\cup \mathbf{e}'$ is itself a tree, which we can call $T_v'$.

         Select an arbitrary vertex in $\partial T_v'$, i.e., its descendants are all in $T_v$ but not in $T_v'$, and call its corresponding cut-edge $e_\star$; also define $\omega_\star = \omega \setminus e_\star$. Note that the tree $T_v(\omega_\star)$ is exactly $T_v(\omega)\setminus S_\star$ where $S_\star$ are all descendants of $e_\star$. 
         If we let $A=\mathcal C_v(\omega_\star)$, then evidently $\omega(\partial_e A\setminus e_\star) \equiv 0$ since $\omega_\star(\partial_e A) \equiv 0$. Also, $|A|\le n/2$ since otherwise $e_\star$ would not belong to $T_v'$. All edges of $\mathbf{e}'$ belong to $T_v(\omega_\star)$ so they are all incident to $A$; therefore $$\Delta |A| \ge |\mathbf{e}'|\ge |\mathsf{CE}_v^{\ne 1}| \ge \Delta \ell^{\alpha}\,,$$
         using the fact that $\mathbf{e}'\supset \mathsf{CE}_v^{\ne 1}$. Lastly, $A$ contains $v\in B_R$ since $v\in T_v(\omega_\star)$. Thus, $A$ violates $\mathcal G_\ell$. 
\end{proof}

The following lemma relates the vulnerability of a configuration to leaving $\mathcal G_\ell$ by means of an edge-update at distance $m$ from $B_R$ to the event $\bar \cG_{m}^c$, allowing us to control the probability that far away updates (of which there are many in order-one continuous time) induce the dynamics to leave $\mathcal G_\ell$. 

\begin{lemma}\label{lem:far-away-update-influence}
    For any $m\ge \ell$, in order for an edge $e\notin B_{R+m^{\alpha}}$ to be pivotal to $\omega\in\mathcal G_\ell$, i.e., for $\omega\oplus \{e\} \in \mathcal G_\ell^c$ while $\omega \in \mathcal G_\ell$, it must be that $\omega \in \bar \cG_{m}^c$.  
\end{lemma}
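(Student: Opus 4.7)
\medskip
\noindent\textbf{Proof plan.} The plan is a short combinatorial case analysis comparing the witness for $\omega'\oplus\{e\}\in\cG_\ell^c$ against $\omega\in\cG_\ell$, followed by a size lower bound coming from the distance condition on $e$.

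Write $\omega' = \omega\oplus\{e\}$ and suppose $\omega\in\cG_\ell$ while $\omega'\in\cG_\ell^c$. By Definition~\ref{def:G-l} applied to $\omega'$, there exists a connected set $A\subset V(G)$ intersecting $B_R$ with $\ell^\alpha\le |A|\le n/2$ and an edge $e'\in\partial_e A$ such that $\omega'(\partial_e A\setminus\{e'\})\equiv 0$. The first step is to pin down the role of $e$ relative to this witness. Since $\omega$ and $\omega'$ differ only at $e$, if $e\notin\partial_e A$ then $\omega(\partial_e A\setminus\{e'\})=\omega'(\partial_e A\setminus\{e'\})\equiv 0$, which would already violate $\omega\in\cG_\ell$; similarly, if $e=e'$ then $\omega$ and $\omega'$ agree on all of $\partial_e A\setminus\{e'\}$, again contradicting $\omega\in\cG_\ell$. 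Therefore we must have $e\in\partial_e A$ and $e\ne e'$, and for any $f\in\partial_e A\setminus\{e,e'\}$ we have $\omega(f)=\omega'(f)=0$, i.e., $\omega(\partial_e A\setminus\{e,e'\})\equiv 0$. This already produces a candidate violation of $\bar\cG_m$ with the two boundary edges $e_1=e$ and $e_2=e'$; what remains is to upgrade the size bound from $|A|\ge\ell^\alpha$ to $|A|\ge m^\alpha$.

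This is where the hypothesis $e\notin B_{R+m^\alpha}$ enters: interpreting this as saying that both endpoints of $e$ lie outside $B_{R+m^\alpha}$ (equivalently, $e$ is at graph-distance strictly greater than $m^\alpha$ from $B_R$), let $u$ denote the endpoint of $e$ that lies in $A$. Then $d_G(o,u)>R+m^\alpha$, while $A$ intersects $B_R$, say at a vertex $w$ with $d_G(o,w)\le R$. Because $A$ is connected, it contains a path from $w$ to $u$ of length at least $d_G(w,u)\ge d_G(o,u)-d_G(o,w)>m^\alpha$, which in turn forces $|A|>m^\alpha$. Combined with the previous step, $A$ is a connected subset of $V(G)$ intersecting $B_R$ with $m^\alpha<|A|\le n/2$ whose edge boundary satisfies $\omega(\partial_e A\setminus\{e,e'\})\equiv 0$, so $(A,e_1,e_2)=(A,e,e')$ witnesses $\omega\in\bar\cG_m^c$, as required.

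There is no serious obstacle here; the argument is purely combinatorial. The only subtlety worth flagging is choosing the interpretation of ``$e\notin B_{R+m^\alpha}$'' so that the path-length lower bound in the final step really does deliver $|A|>m^\alpha$. With the interpretation above (both endpoints of $e$ far from $o$), the size bound is immediate from connectivity of $A$ and the triangle inequality; a weaker interpretation—only requiring one endpoint of $e$ outside $B_{R+m^\alpha}$—would fail precisely when the endpoint in $A$ happens to be close to $B_R$, so one should verify that the sections invoking this lemma apply it only in the intended sense.
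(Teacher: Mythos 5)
Your proof is correct and follows essentially the same route as the paper's: take the witness $(A,e')$ certifying $\omega'\in\cG_\ell^c$, show by comparing $\omega$ and $\omega'$ that necessarily $e\in\partial_e A$ and $e\neq e'$ so that $\omega(\partial_e A\setminus\{e,e'\})\equiv 0$, and then use the distance of $e$ from $B_R$ together with connectedness of $A$ to upgrade $|A|\ge\ell^\alpha$ to $|A|\ge m^\alpha$. The only presentational difference is that the paper dismisses the ``opening'' case $e\notin\omega$ outright via the monotonicity of $\cG_\ell$ and then argues on the closing case, whereas your case analysis on whether $e\in\partial_e A$ and whether $e=e'$ treats both cases uniformly (and is slightly more explicit about why $e\neq e'$, a point the paper elides).
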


\begin{proof}
    Suppose $\omega \in \mathcal G_\ell$, $e\notin E(B_{R+m^{\alpha}})$ such that $e\notin \omega$ and $\omega\cup \{e\} \in \mathcal G_\ell^c$ or $e\in \omega$ and $\omega \setminus \{e\} \in \mathcal G_\ell^c$.  

    Since $\cG_\ell$ is an increasing event, the first case is not possible. 
     In the second case, the removal of an edge $e\in \omega$ outside $B_{R+m^{\alpha}}$ causes a component $A$ to become part of $\mathcal G_\ell^c$. Call $A$ the set in $\omega \setminus e$ that violates $\mathcal G_\ell$. Then the edge $e$ must be in $\partial_e A$,  meaning that the set $A$ is a set of size at most $n/2$, intersecting $B_R$ and with one other edge $e_1 \in \partial_e A$ such that $\omega(\partial_e A\setminus \{e,e_1\})\equiv 0$. 
    Since the distance of $e$ to $B_R$ is at least $m^{\alpha}$, it must be that $|A|\ge \diam(A)\ge m^{\alpha}$.  
\end{proof}

We now turn to the probabilistic estimates. The following lemma utilizes Definition~\ref{def:supercritical-Bernoulli-perc} to bound the probability of $\mathcal G_\ell^c$ (as well as of $\bar \cG_{m}^c$) under the independent edge-percolation measure $\widetilde{\pi} = \pi_{G,\tilde p,1}$. 

\begin{lemma}\label{lem:G-l-high-probability}
    If $G$ satisfies Definition~\ref{def:supercritical-Bernoulli-perc} and has $\eta$-stretched exponential volume growth for $\eta<\eta_0(\delta)$, for all $\tilde p$ sufficiently large, 
    \begin{align*}
        \widetilde\pi(\widetilde\omega \notin \mathcal G_\ell)\le C |B_R| \exp( - \ell)\,,
    \end{align*}
    for some $C(\tilde p, \eta,\delta)$. Similarly, 
    \begin{align*}
         \widetilde\pi( \widetilde\omega \notin \bar \cG_{m}) \le C |B_R| \exp(- m)\,.
    \end{align*}
\end{lemma}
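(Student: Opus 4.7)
The plan is to bound $\widetilde\pi(\cG_\ell^c)$ by union-bounding over the vertex $v\in B_R$ lying in the purported violating set $A$, and then reducing each contribution to an application of Definition~\ref{def:supercritical-Bernoulli-perc}. For each fixed $v$, I split $\cG_\ell^c$ into two subcases depending on whether the distinguished edge $e^\star\in\partial_e A$ (the one allowed to be open) is closed or open in $\widetilde\omega$.

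In the first subcase, $e^\star$ is closed, so $\widetilde\omega(\partial_e A)\equiv 0$ outright, and Definition~\ref{def:supercritical-Bernoulli-perc} applied at scale $\ell^\alpha$ yields a bound of $\exp(-(\ell^\alpha)^{\delta/(1+\delta)})=\exp(-\ell)$, since $\alpha=(1+\delta)/\delta$. Summing over $v\in B_R$ produces the desired $|B_R|\exp(-\ell)$ contribution. In the second subcase, $e^\star$ open, I further union-bound over the choice of $e^\star$. Because the event $\{\exists A\ni v : e^\star\in\partial_e A,\, |A|\in[\ell^\alpha,n/2],\, \widetilde\omega(\partial_e A\setminus\{e^\star\})\equiv 0\}$ does not depend on $\widetilde\omega(e^\star)$, independence lets me, after forcing $e^\star$ closed (consistent with the boundary constraint), apply Definition~\ref{def:supercritical-Bernoulli-perc} to bound the probability per fixed $(v,e^\star)$ by $\tilde p\,\exp(-\ell)/(1-\tilde p)$.

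The main obstacle is that a naive union bound over $e^\star$ costs $|E(G)|$, which far overwhelms $\exp(-\ell)$. The resolution is to split the sum by $d_G(v,e^\star)$. For near edges ($d_G(v,e^\star)\le\ell^\alpha$), the count is at most $\Delta|B_{\ell^\alpha}(v)|\le\Delta e^{\ell^{\alpha\eta}}$; choosing $\eta<\delta/(1+\delta)=:\eta_0(\delta)$ ensures $\alpha\eta<1$, so $\ell^{\alpha\eta}=o(\ell)$ and this prefactor is absorbed into the exponential decay. For far edges ($d_G(v,e^\star)=k>\ell^\alpha$), the connectivity of $A\ni v$ with $e^\star\in\partial_e A$ forces $|A|\ge k$, so Definition~\ref{def:supercritical-Bernoulli-perc} applied at scale $k$ gives the stronger bound $\exp(-k^{\delta/(1+\delta)})$; under $\eta<\eta_0(\delta)$, the series $\sum_{k>\ell^\alpha}\Delta e^{k^\eta}\exp(-k^{\delta/(1+\delta)})$ is geometrically summable and contributes $O(\exp(-\ell))$.

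For the second claim on $\bar\cG_m$, the argument is entirely parallel, replacing the single exceptional edge by a pair $(e_1,e_2)$. I would split by how many of $e_1,e_2$ are open in $\widetilde\omega$ (zero, one, or two), apply Definition~\ref{def:supercritical-Bernoulli-perc} at scale $m^\alpha$ in the ``zero open'' case, and apply the same near/far dichotomy to each open edge in the remaining cases. The resulting single or double sum remains controlled under $\eta<\eta_0(\delta)$, yielding $\widetilde\pi(\bar\cG_m^c)\le C|B_R|\exp(-m)$. The principal delicate step throughout is the summation over exceptional edges, which is precisely what constrains $\eta_0$ in terms of $\delta$; the exponent $\exp(-\ell)$ in the target should be read up to absorbing a constant factor in the rate via either enlarging $C$ or a mild rescaling of $\ell$ at the end of the argument.
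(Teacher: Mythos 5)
Your proof is correct and follows essentially the same route as the paper. Both arguments reduce $\widetilde\pi(\cG_\ell^c)$ to the fully-closed-boundary event of Definition~\ref{def:supercritical-Bernoulli-perc} at the cost of an explicit $\tilde p/(1-\tilde p)$ factor for the one permitted open edge, and both control the resulting union bound over the choice of that edge by trading the volume-growth count $e^{(\cdot)^\eta}$ against the stretched-exponential decay $e^{-(\cdot)^{\delta/(1+\delta)}}$; the paper organizes this count by the size $r=|A|$ (bounding pre-images of a closing map $\phi_e$), while you organize by the distance $k=d_G(v,e^\star)$ with an explicit near/far dichotomy, and these are equivalent bookkeeping. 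You correctly identify the threshold $\eta_0(\delta)=\delta/(1+\delta)$ (i.e., $\alpha\eta<1$), and you correctly flag that the advertised rate $\exp(-\ell)$ should be read up to a mild rescaling since the dominant term is really $\exp(-\ell+\ell^{\alpha\eta})$, a looseness also present in the paper and harmless in the downstream application. The treatment of $\bar\cG_m$ with a pair of exceptional edges is likewise parallel to the paper's one-line remark about squaring the prefactor and doubling the volume exponent.
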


\begin{proof}
         The lemma is almost a union bound together with  Definition~\ref{def:supercritical-Bernoulli-perc}, the only distinction being that we allow one or two of the edges in $\partial_e A$ to be open. Fix a vertex $v$, and for every configuration $\widetilde \omega$ in $\mathcal G_\ell^c$ by means of a set $A= A_v(\tilde \omega)$ containing $v$, such that $\widetilde \omega(\partial_e A\setminus e)\equiv 0$, let $\phi_e(\widetilde \omega)= \omega\setminus e$. Evidently,  
        \begin{align*}
            \frac{\widetilde \pi(\widetilde \omega)}{\widetilde \pi(\phi_e (\widetilde \omega))} \le \frac{\tilde p}{1-\tilde p}\,.
        \end{align*}
        For every $\widetilde \omega$, the configuration $\phi_e(\widetilde \omega)$ has a set $A$ intersecting $v$ such that $\ell^{\alpha} \le |A|\le n/2$ and such that $(\phi_e(\widetilde\omega))(\partial_e A) \equiv 0$, the probability of which is governed by Definition~\ref{def:supercritical-Bernoulli-perc}. Furthermore, if this $A$ has size exactly $r$, the set of all pre-images of a single $\widetilde \omega$ under the map $\phi_e$ is bounded by the set of all $e$ such that $d(v,e)\le r$, which is at most $e^{r^{\eta}}$ per the $\eta$-stretched exponential growth assumption. Putting this all together, we get 
        \begin{align}\label{eq:G_m-Peierls}
           \widetilde \pi(\widetilde \omega \notin \mathcal G_\ell) \le  \sum_{v\in B_R} \sum_{r = \ell^{\alpha}}^{n/2} \sum_{\tilde \omega \in \cG_{\ell}^c: |A_v(\tilde \omega)| = r} \widetilde \pi(\widetilde \omega) \le \frac{\tilde p}{1-\tilde p} \sum_{v\in B_R}\sum_{r=\ell^{\alpha}}^{n/2} e^{r^{\eta}} e^{ - r^{\delta/(1+\delta)}}\,.
        \end{align}
        As long as $\eta$ is smaller than $\alpha = (1+\delta)/\delta$, the above quantity is at most some constant (depending on $\eta,\delta,\tilde p$) times $|B_R|e^{ - \ell}$. 
        
        The argument for $\bar \cG_{m}$ is essentially identical, with the only differences being that in~\eqref{eq:G_m-Peierls}, the pre-factor $\tilde p/(1-\tilde p)$ is squared, and the number of choices of \emph{two} edges that could be closed contributes a factor of $e^{2r^\eta}$ instead of $e^{r^\eta}$. 
\end{proof}

The last lemma we need towards proving Proposition~\ref{prop:burnt-in-is-good} is one for bounding the number of clock rings in $B_{R+m^\alpha}$. This follows from a standard Poisson tail bound together with a union bound. 

\begin{lemma}\label{lem:Poisson-clock-ring-concentration}
    For a set $A$, let $N_{A}^{[t,2t]}$ be the number of clock rings in $A$ in the time interval $[t,2t]$. We have 
    \begin{align*}
        \mathbb P\Big(\bigcup_{m\ge 1} \{N_{B_{R+m^\alpha}}^{[t,2t]} \ge 4t|B_{R+m^{\alpha}}|\} \Big) \le \exp( - t |B_R|)\,.
    \end{align*}
\end{lemma}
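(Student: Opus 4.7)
The plan is a standard Poisson concentration estimate combined with a union bound, with the only subtle point being that although $m$ ranges over all positive integers, the nested sequence $(B_{R+m^\alpha})_{m\ge 1}$ takes only finitely many distinct values.

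First, for a fixed $m$, $N_{B_{R+m^\alpha}}^{[t,2t]}$ is a Poisson random variable with mean proportional to $t|B_{R+m^\alpha}|$, being a superposition of independent rate-$1$ clocks on the edges associated with $B_{R+m^\alpha}$. The standard Chernoff bound for a Poisson($\lambda$) variable, $\mathbb{P}(N \ge c\lambda) \le \exp(-\lambda(c\ln c - c + 1))$, applied with $c = 4$ gives $\mathbb{P}(N \ge 4\lambda) \le e^{-2\lambda}$. Absorbing any bounded-degree constant (from whether the Poisson mean scales as $|A|$ or $|E(A)|$) into the exponent yields
\[
\mathbb{P}\bigl(N_{B_{R+m^\alpha}}^{[t,2t]} \ge 4t|B_{R+m^\alpha}|\bigr) \le \exp(-2t|B_{R+m^\alpha}|)
\]
for every $m \ge 1$.

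Next, I would observe that since the balls $B_{R+m^\alpha}$ are nested and contained in the finite set $V(G)$, the sequence $(|B_{R+m^\alpha}|)_{m\ge 1}$ is nondecreasing and takes at most $n$ distinct values; moreover, whenever $|B_{R+m^\alpha}| = |B_{R+(m+1)^\alpha}|$, the balls themselves coincide, so the corresponding events coincide as well. The union $\bigcup_{m\ge 1}\{N_{B_{R+m^\alpha}}^{[t,2t]} \ge 4t|B_{R+m^\alpha}|\}$ therefore reduces to a union of at most one event per distinct size $k \in \{|B_R|,\dots,n\}$. A union bound together with geometric summation gives
\[
\mathbb{P}\Bigl(\bigcup_{m\ge 1}\{N_{B_{R+m^\alpha}}^{[t,2t]} \ge 4t|B_{R+m^\alpha}|\}\Bigr) \;\le\; \sum_{k \ge |B_R|} e^{-2tk} \;\le\; \frac{e^{-2t|B_R|}}{1 - e^{-2t}}\,,
\]
and for $t$ bounded below by a universal constant (which we may assume, since the surrounding applications only invoke the lemma for $t\ge T_0$), the prefactor $(1-e^{-2t})^{-1}$ can be absorbed into an extra factor of $e^{t|B_R|}$, producing the claimed bound $e^{-t|B_R|}$.

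There is no real obstacle: this is a routine Chernoff-plus-union-bound estimate. The only thing one must be mildly careful about is that a naive union over $m \in \mathbb{N}$ is infinite, so one has to exploit the nestedness and finiteness of the ball sequence to collapse the union to $O(n)$ distinct events; after that, the factor of $4$ in the Chernoff threshold leaves enough slack in the exponent $-2t|B_{R+m^\alpha}|$ for the geometric sum to telescope to the desired $e^{-t|B_R|}$.
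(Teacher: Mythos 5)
Your proof is correct and follows essentially the same route as the paper's: a Poisson Chernoff bound at threshold $4\lambda$ giving $e^{-2\lambda}$, followed by a union bound whose infinite index set is tamed by the nestedness of the balls (the paper phrases this as $|B_{R+m^\alpha}|\ge |B_R|+m^\alpha$ until the balls stabilize, you phrase it as collapsing to one event per distinct ball size --- the same observation), and finally absorbing the geometric-sum prefactor into the exponent using $t$ bounded below. Your explicit caveats about the edge-versus-vertex count in the Poisson rate and about needing $t\ge T_0$ to absorb the constant are, if anything, slightly more careful than the paper's "using that $t>0$".
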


\begin{proof}
    The number of clock rings in a set $A$ in an interval of length $t>0$ is distributed as a Poisson with rate $|A|t$. Therefore, 
    \begin{align*}
        \mathbb P\big(N_{A}^{[t,2t]} \ge 4t|A| \big) \le \exp( - 2t |A|)\,.
    \end{align*}
    By a union bound, we then get 
        \begin{align*}
        \mathbb P\Big(\bigcup_{m} \{N_{B_{R+m^{\alpha}}}^{[t,2t]} \ge 4t|B_{R+m^{\alpha}}|\} \Big) \le \sum_m e^{ - 2 t |B_{R+m^{\alpha}}|}\,.
    \end{align*}
    Using that $|B_{R+m^{\alpha}}|$ is at least $|B_R| + m^{\alpha}$ by the fact that $m$ only ranges until expanding the radius doesn't add any vertices, and using that $t>0$, this sums out to give $C e^{ - 2t |B_R|}$, whence we absorb the constant $C$ by changing the $2$ in the exponent. 
\end{proof}

\begin{proof}[\textbf{\emph{Proof of Proposition~\ref{prop:burnt-in-is-good}}}]
    By Lemma~\ref{lem:E-ell-G-ell} it suffices to bound the probability of $\bigcup_{s=t}^{2t}\{X_s \notin \mathcal G_\ell\}$. 
    Condition on the clock rings between times $t$ and $2t$; this set of clock rings generates a $\sigma$-algebra we denote by $\mathscr{T}_{[t,2t]}$. Let 
    \begin{align*}
        E_{[t,2t]} := \bigcap_{m\ge \ell} \big\{N_{B_{R+m^{\alpha}}}^{[t,2t]}\le 4t|B_{R+m^{\alpha}}|\big\}\,,
    \end{align*}
    measurable with respect to $\mathscr{T}_{[t,2t]}$. 
    Lemma~\ref{lem:Poisson-clock-ring-concentration} showed that $\mathbb P(E_{[t,2t]}^c) \le \exp(- t |B_R|)$. We can then write  
    \begin{align*}
        \mathbb P\Big(\bigcup_{s=t}^{2t} \{X_s \notin \mathcal G_\ell \} \Big)\le \max_{(e_i,s_i)_i\in E_{[t,2t]}} \mathbb P\Big(\bigcup_{i} \{X_{s_i}\notin \cG_\ell\} \mid (e_i,s_i)_{i}\Big) + e^{ - t|B_R|}\,,
    \end{align*}
    where $(e_i,s_i)_i$ denotes the sequence of pairs of edges and corresponding clock rings between times $t$ and $2t$. For ease of notation, let $s_0 = t$ and let $\mathcal G_{\ell,s}^c$ be the event $\{X_s\notin \mathcal G_\ell\}$. We now write the union above as 
    \begin{align*}
        \bigcup_{i\ge 1} \mathcal G_{\ell,s_i}^c \, \subset \, \cG_{\ell,s_0}^c \cup \bigcup_{i\ge 1} (\cG_{\ell,s_{i-1}}\cap \cG_{\ell,{s_i}}^c) \,.
    \end{align*}
    Furthermore, given the clock ring times and locations $(e_i,s_i)_i$, we can let $I_0 = B_{R+\ell^{\alpha}}$ and for $m\ge \ell$, let $I_m$ be the set of $i$'s for which $e_i$ is in $B_{R+(m+1)^{\alpha}}\setminus B_{R+m^{\alpha}}$. 
    Then, 
    \begin{align*}
        \bigcup_{i\ge 1} \mathcal G_{\ell,s_i}^c \, \subset \, \mathcal G_{\ell,s_0}^c \cup \bigcup_{i\in I_0} (\mathcal G_{\ell,s_{i-1}} \cap \mathcal G_{\ell,s_i}^c) \cup \bigcup_{m\ge \ell} \bigcup_{i\in I_m} (\mathcal G_{\ell,s_{i-1}} \cap \mathcal G_{\ell,s_i}^c)\,.
    \end{align*}
    By Lemma~\ref{lem:far-away-update-influence}, for $i\in I_m$ for $m\ge \ell$, we have  
    \begin{align*}
        \big(\mathcal G_{\ell,s_{i-1}} \cap \mathcal G_{\ell,s_i}^c\big) \, \subset \, \{X_{s_{i-1}}  \notin \bar \cG_{m}\} \,.
    \end{align*}
    Using this bound for $i\in I_m$, and the obvious bound $(\mathcal G_{\ell,s_{i-1}} \cap \mathcal G_{\ell,s_i}^c)\subset \mathcal G_{\ell,s_i}^c$ for $i\in I_0$, we obtain 
    \begin{align*}
        \bigcup_{i\ge 1} \mathcal G_{\ell,s_i}^c \, \subset \, \{\mathcal G_{\ell,t}^c\} \cup \bigcup_{i\in I_0} \{ \mathcal G_{\ell,s_i}^c\} \cup  \bigcup_{m\ge \ell} \bigcup_{i\in I_m} \{X_{s_{i-1}}\notin \bar \cG_{m}\} \,.
    \end{align*}
    Taking the probability on either side, conditioning on $\mathscr{T}_{[t,2t]}$ and using a union bound, we get 
    \begin{align*}
        \mathbb P\Big(\bigcup_{s=t}^{2t} \{X_s \notin \mathcal G_\ell \} \Big) & \le \max_{(e_i,s_i)_i\in E_{[t,2t]}} \mathbb P\big(X_t\notin \mathcal G_\ell \mid (e_i,s_i)_i\big) + \max_{(e_i,s_i)_i \in E_{[t,2t]}} \sum_{i\in I_0} \mathbb P(X_{s_i}\notin \mathcal G_{\ell} \mid (e_i,s_i)_i) \\
        & \quad + \max_{(e_i,s_i)_i \in E_{[t,2t]}} \sum_{m\ge \ell} \sum_{i \in I_m} \mathbb P\big(X_{s_{i-1}}\notin \bar \cG_{m} \mid (e_i,s_i)_i\big)  + e^{ - t |B_R|}\,.
    \end{align*}
    By Lemma~\ref{lem:domination-after-burn-in}, conditionally on any $(e_i,s_i)_i \in \mathscr{T}_{[t,2t]}$, the law of $X_{s_i} \succeq \widetilde \omega$ where $\widetilde \omega$ is drawn from a $\text{Ber}(\tilde p)$ distribution, so long as $t\ge T_0$ from that lemma. Since the events $\cG_\ell^c$ and $\bar \cG_{m}^c$ are decreasing events, each of the probabilities above is bounded above by their analogs for $\widetilde \omega$. Finally, the number of summands $|I_k| \le 4t|B_{R+m^{\alpha}}|$ since $(e_i,s_i)_i\in E_{[t,2t]}$. Together, this means  
    \begin{align*}
        \mathbb P\Big(\bigcup_{s=t}^{2t} \{X_s \notin \mathcal G_\ell \} \Big)\le (1+4t|B_{R+\ell^{\alpha}}|)\mathbb P(\widetilde \omega \notin \mathcal G_\ell) + \sum_{m\ge \ell} 4t |B_{R+(m+1)^{\alpha}}| \mathbb P(\widetilde \omega \notin \bar \cG_{m}) + e^{ - t |B_R|}\,.
    \end{align*}
    By Lemma~\ref{lem:G-l-high-probability}, this is at most 
    \begin{align*}
        (1+4t|B_{R+\ell^{\alpha}}|) e^{ - \ell}  + 4C t \sum_{m\ge \ell} |B_{R+(m+1)^{\alpha}}| e^{ - m} + e^{ - t|B_R|}\,.
    \end{align*}
    Using the stretched-exponential volume growth bound $|B_{R+ m^{\alpha}}|\le e^{(R+m^{\alpha})^\eta}\le e^{R^\eta + m^{\alpha\eta}}$, the first two terms above are summable and yield $4t e^{R^{\eta}} e^{ - \ell}$ as long as $\eta$ is small enough depending on $\delta$, and $\ell,R$ are large enough. The additional term $e^{-t|B_R|}$ is absorbed since $t|B_R|\ge R$ for $t\ge 1$. 
    \end{proof}

\subsection{Exponential relaxation to equilibrium after burn-in}\label{subsec:proof-subexp-mixing}
Our aim is to now combine the above ingredients to establish that after a burn-in period that keeps our configuration in the set $\mathcal E_{\ell}$ per Proposition~\ref{prop:burnt-in-is-good}, the disagreement propagation bounds of Section~\ref{sec:low-temp-disagreement-percolation} can be implemented to guarantee exponential relaxation to equilibrium as long as $p$ is sufficiently close to $1$ to kickstart the spacetime recursion.

        \begin{proposition}\label{prop:exp-relaxation-to-equilibrium}
             Fix $q,\Delta,\delta$. There exists $\eta_0(\delta)>0$ and $p_0(q,\Delta,\delta,\eta)<1$ and $C(p,q,\Delta,\delta,\eta)$ such that for every $\eta<\eta_0$ and $p\ge p_0$ we have the following. For any $G$ satisfying Definition~\ref{def:supercritical-Bernoulli-perc} and $\eta$-stretched-exponential volume growth, the FK dynamics satisfies
            \begin{align*}
               \max_{e\in E(G)} \big(\mathbb P(X^{\one}_{s}(e) =1) - \mathbb P(X^{\zero}_{s}(e) =1)\big) \le e^{-  s/C}\,, \qquad \text{for all $s\le (\log n)^2$}\,.
            \end{align*}
        \end{proposition}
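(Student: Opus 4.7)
The plan is to combine the crude initial bound coming from stochastic domination after a burn-in with the disagreement-percolation machinery of Section~\ref{sec:low-temp-disagreement-percolation} and the good-event control of Proposition~\ref{prop:burnt-in-is-good}, all packaged into a space-time recursion. Work throughout in the grand monotone coupling so that $X_s^{\one} \succeq X_s^{\zero}$ for every $s$; then disagreement at an edge $e$ means precisely that $X_s^{\one}(e) = 1$ and $X_s^{\zero}(e) = 0$. Define $\psi(s) := \sup_{e\in E(G)}\bigl(\mathbb{P}(X_s^{\one}(e) = 1) - \mathbb{P}(X_s^{\zero}(e) = 1)\bigr)$; the statement to prove is $\psi(s) \le e^{-s/C}$ for $s \le (\log n)^2$.

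First I would extract a crude base bound $\psi(T_0) \le \epsilon_0$ from Lemma~\ref{lem:domination-after-burn-in}. For any target $\delta'>0$, after the burn-in time $T_0(\delta')$ provided by that lemma, $X_{T_0}^{\zero}$ stochastically dominates independent edge-percolation at parameter $\ps - \delta'$. By monotonicity of the grand coupling, $\mathbb{P}(X_{T_0}^{\one}(e) \ne X_{T_0}^{\zero}(e)) \le \mathbb{P}(X_{T_0}^{\zero}(e) = 0) \le 1 - (\ps - \delta')$, and since $\ps = p/(p + q(1-p)) \uparrow 1$ as $p \uparrow 1$, the quantity $\epsilon_0 := 1-\ps+\delta'$ can be made as small as we wish by enlarging $p_0$.

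Next, fix an arbitrary edge $e$ and parameters $R, \ell, T$ (functions of $s$) with $T \le \epsilon R/\ell^{2\alpha}$. I would mimic the witness construction of Proposition~\ref{prop:disagreement-probability}, but shift ``time $0$'' to an intermediate time $s \ge T_0$, initializing the disagreement region by
\[
\mathcal{D}_s \;=\; \bigl(E(G)\setminus E(B_R(e))\bigr) \;\cup\; \{f \in E(B_R(e)) : X_s^{\one}(f) \ne X_s^{\zero}(f)\}.
\]
The analog of Proposition~\ref{prop:disagreement-region} then says a disagreement at $e$ at time $s+T$ forces a backward witness chain from $e$ whose terminal edge lies either on $\partial B_R(e)$ (``came from outside'') or at an initial disagreement inside $B_R(e)$. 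Conditioning on the configuration at time $s$ and on both chains remaining in $\mathcal{E}_\ell$ throughout $[s,s+T]$ (high probability by Proposition~\ref{prop:burnt-in-is-good}), the counting argument from the proof of Proposition~\ref{prop:disagreement-probability}---carried out with the terminal vertex $w_K$ now ranging over $\partial B_R \cup \partial D_s$ rather than just $\partial B_R$---yields a recursion
\[
\psi(s + T) \;\le\; |E(B_R)|\,\psi(s) \;+\; C|\partial B_R|\,e^{-R/(2\ell^{\alpha})} \;+\; C T\, e^{R^{\eta}}\,e^{-\ell}.
\]
Calibrating $R = R(s)$ and $\ell = \ell(s)$ to grow with $s$ just fast enough---using $\eta < \eta_0(\delta)$ to guarantee $R^{\eta} \ll R/\ell^{\alpha}$ for admissible $\ell$---and iterating from $\psi(T_0) \le \epsilon_0$ converts the crude constant bound into exponential decay $\psi(s) \le e^{-s/C}$. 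The constraint $s \le (\log n)^2$ keeps the scales $R(s)$ well below $n$ so that the good events and bounds above remain valid throughout.

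The main obstacle I expect is calibrating the recursion so it genuinely contracts: the spatial union-bound cost $|E(B_R)|$ in the first term must be offset by the smallness of $\psi(s)$ at each stage (which is why an arbitrarily small $\epsilon_0$---i.e.\ arbitrarily large $p_0$---is necessary to ``kickstart'' the process), while the two error terms must simultaneously remain below the target $e^{-(s+T)/C}$. This is delicate because the volume $|E(B_R)|$ can be stretched-exponentially large in $R$, the disagreement-percolation error only decays like $\exp(-R/\ell^{\alpha})$, and the time step $T$ scales like $R/\ell^{2\alpha}$; choosing $\eta$ small relative to $\delta$ is precisely what makes these three constraints compatible. A secondary subtlety is tracking the witness chains that terminate at an initial disagreement inside $B_R(e)$ rather than escaping to $\partial B_R(e)$: one must sum the Proposition~\ref{prop:disagreement-probability}-style clock-ring estimate over the possible endpoints $f$ weighted by $\mathbb{P}(f \in D_s) \le \psi(s)$, using the stretched-exponential volume bound together with the smallness of $cT/K$ (for $K \ge d(e,f)/\ell^{\alpha}$) to keep this sum under control.
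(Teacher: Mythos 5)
Your recursion $\psi(s+T)\le |E(B_R)|\,\psi(s) + \text{(errors)}$ cannot contract: the coefficient $|E(B_R)|\ge 1$ makes the first term at least as large as $\psi(s)$, so iteration gives no decay. The issue is visible already in the $K=0$ case of your modified witness construction: if the initial disagreement at time $s$ is at $e$ itself (or an adjacent edge), the witness chain is trivially short, the ``smallness of $cT/K$'' gives no gain, and the contribution from that single $f$ alone is already $\Theta(\psi(s))$. Your argument bounds the probability that a disagreement \emph{propagates} a long distance, but says nothing about a disagreement near $e$ simply persisting or recurring, and that persistence probability is $\Theta(\psi(s))$, not $o(\psi(s))$.

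The paper's proof avoids this by a different decomposition that yields a \emph{quadratic} recurrence rather than a linear one. It splits on the event $A_{t,R,e}$ that $X_t,Y_t$ already disagree somewhere in $B_R(e)$ at the midpoint time $t$. On the ``already disagree'' branch, one does not run the witness argument at all; instead the Markov property of the grand coupling plus monotonicity give $\Pr(X_{2t}(e)\ne Y_{2t}(e)\mid X_t,Y_t)\le\rho(t)$ uniformly over the conditioning, while $\Pr(A_{t,R,e})\le |E(B_R)|\rho(t)$ by a union bound. Their product is $|E(B_R)|\rho(t)^2$, quadratic in $\rho(t)$. The witness construction (Proposition~\ref{prop:disagreement-probability}) is then applied only on the complementary ``agree on $B_R(e)$ at time $t$'' branch, exactly as stated there, with no modification to $\mathcal{D}$ needed. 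The resulting recurrence $\rho(2t)\le e^{\sqrt t}\rho(t)^2+e^{-t}$ is then solved via the auxiliary function $\phi(t)=e^{\sqrt t}(\rho(t)+e^{-t/2})^{1/2}$, for which $\phi(2t)\le\phi(t)^2$; this step is also not addressed by your vague ``iterate to get exponential decay.'' The large-$p$ initial bound $\rho(t_0)\le(1-\ps)+e^{-t_0}$ (obtained directly from the update rule, not Lemma~\ref{lem:domination-after-burn-in}, though your version is equivalent) is what makes $\phi(t_0)<1/e$ and kickstarts the iteration. In short, the missing idea is the Markov-restart argument that converts a naive linear recursion into a quadratic one.
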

        
	\begin{proof} 
        Abusing notation slightly, let $(X_s)_{s\ge 0} = (X_s^\zero)_{s\ge 0}$ and $(Y_s)_{s\ge 0} = (X_s^\one)_{s\ge 0}$. Define 
        \begin{align*}
            \rho(t) : = \max_{e\in E} \mathbb P(X_t(e)\ne Y_t(e))\,,
        \end{align*}
        under the grand coupling (whence the probability is exactly the difference of the probabilities of $e$ taking value $1$). Recall the definition of $\mathcal E_{\ell} = \mathcal E_{\ell,\alpha,\alpha}$ for $\alpha = (1+\delta)/\delta$ from Definition~\ref{def:E-l-t} and~\eqref{eq:alpha-gamma-choices}.  
        Our first aim is to establish the following recurrence relation, 
        \begin{align}\label{eq:wts-recurrence}
            \rho(2t) \le e^{\sqrt{t}}\rho(t)^2 + e^{-t}\,.
        \end{align}
        for all $t\ge T_0$ for a large enough $T_0$, 
        Toward this aim, let  
        \begin{align*}
            A_{t,R,e}= \{ X_t(B_R(e)) \ne Y_t(B_R(e))\}\,.
        \end{align*}
        Then, for any fixed $e\in E(G)$, we have 
        \begin{align}
        \Pr(X_{2t}(e) \neq Y_{2t}(e))  & \le
		\Pr(X_{2t}(e) \neq Y_{2t}(e) \mid {A}_{t,R,e}^c)  \Pr(A_{t,R,e}^c)  \label{eq:triang:1} \\
		&\quad + 	\Pr\Big(X_{2t}(e) \neq Y_{2t}(e), A_{t,R,e}, \bigcap_{s=t}^{2t} \{X_s,Y_s\in \mathcal{E}_{\ell}\}\Big) \label{eq:triang:2} \\
		&\quad + 
		\Pr\Big(\bigcup\nolimits_{s=t}^{2t} \{X_s\notin \mathcal{E}_{\ell}\}\Big)  +  \Pr\Big(\bigcup\nolimits_{s=t}^{2t} \{Y_s\notin \mathcal{E}_{\ell}\}\Big) \label{eq:triang:3}\,.
        \end{align}
        First notice that 
        \begin{align*}
            \Pr(X_{2t}(e) \neq Y_{2t}(e) \mid {A}_{t,R,e}^c) \le \rho(t)\,,
        \end{align*}
        since we can condition on $X_t,Y_t$, w.r.t.\ which $A_{t,R,e}$ is measurable, and use the property of the grand coupling that 
        \begin{align*}
            \max_{\omega_0,\omega_0'} \mathbb P(X_t^{\omega_0}(e)\ne X_t^{\omega_0'}(e)) \le \mathbb P(X_t^{\zero}(e)\ne X_t^{\one}(e))\,.
        \end{align*}
        At the same time, by a union bound over $e \in E(B_R)$, we can bound $\mathbb P(A_{t,R,e}^c)\le |E(B_R)| \rho(t)$. These give the bound on~\eqref{eq:triang:1} of
        \begin{align*}
            \Pr(X_{2t}(e) \neq Y_{2t}(e) \mid {A}_{t,R,e}^c)  \Pr(A_{t,R,e}^c)\le |E(B_R)| \rho(t)^2\,.
        \end{align*}
        The quantity in~\eqref{eq:triang:2} is controlled by Proposition~\ref{prop:disagreement-probability}, whence as long as $t\le \epsilon R/\ell^{2\alpha}$, 
        \begin{align*}
            \Pr\Big(X_{2t}(e) \neq Y_{2t}(e), A_{t,R,e}, \bigcap_{s=t}^{2t} \{X_s,Y_s \in \mathcal{E}_{\ell}\}\Big)\le C |\partial B_R| \exp( - R/(2\ell^{\alpha}))\,.
        \end{align*}
        Finally, we control each of the  terms in~\eqref{eq:triang:3} by Proposition~\ref{prop:burnt-in-is-good} giving
        \begin{align*}
            \mathbb P\Big( \bigcup_{s=t}^{2t} \{X_s \notin \mathcal E_{\ell}\}\Big) + \mathbb P\Big(\bigcup_{s=t}^{2t}\{Y_s \notin \mathcal E_\ell\}\Big) \le  Ct e^{ R^\eta} e^{ - \ell} \,.
        \end{align*}
        as long as $t\ge T_0(\delta,\eta,q)$ and $\eta<\eta_0(\delta)$. 
        Putting the above together, and using the bounds on $|E(B_R)|$ and $|\partial B_R|$ from the fact that $G$ has $\eta$-stretched-exponential volume growth, for all $T_0(\delta,\eta,q)\le t \le \epsilon R/\ell^{2\alpha}$,  
        \begin{align*}
            \rho(2t) \le e^{R^\eta} \rho(t)^2 + C e^{R^\eta}e^{ - R/(2\ell^{\alpha})} + Cte^{R^\eta} e^{ - \ell}\,.
        \end{align*}
        If we make the choices    
        \begin{align*}
            \ell = 2 R^{2\eta}\,,\qquad \text{and} \qquad t=\ell/2= R^{2\eta}\,.
        \end{align*}
        we find that as long as $\eta <\eta_0(\delta)$ and $t,\ell,R$ are sufficiently large (as a function of $\delta,\eta,\epsilon$), we maintain $t\le \epsilon R/\ell^{2\alpha}$ and we can absorb the pre-factors above to obtain the claimed~\eqref{eq:wts-recurrence}. That recurrence will hold for all $t\ge T_0(q,\delta,\eta)$ and as an upper bound, for all $t\le (\log n)^2$ since our arguments are all valid as long as $R\le \diam(G)$ which for $G$ of $\eta$-subexponential volume growth is for all $R\le (\log n)^{1/\eta}$, which translates to $t\le (\log n)^2$. 
        
        It remains to deduce the exponential decay on $\rho(t)$ from~\eqref{eq:wts-recurrence}; consider the function 
        \begin{align*}
            \phi(t) = e^{\sqrt{t}} \big(\rho(t) +  e^{-t/2}\big)^{1/2}\,. 
        \end{align*}
        Then by~\eqref{eq:wts-recurrence}, and the fact that $\sqrt{a+b}\le \sqrt{a}+\sqrt{b}$, 
        \begin{align*}
            \phi(2t) \le e^{\sqrt{2}\sqrt{t}} \big(e^{\sqrt{t}} \rho(t)^2 + e^{-t} +  e^{ - t}\big)^{1/2} \le e^{(\sqrt{2} + .5 )\sqrt{t}} \rho(t) + \sqrt{2} e^{ \sqrt{2t}} e^{ - t/2}\,.
        \end{align*}
         Since $2\ge \sqrt{2}+ .5$ and $\sqrt{2}e^{\sqrt{2t}}\le e^{ 2\sqrt{t}}$ for all $t\ge 1$, this is at most $\phi(t)^2$. Therefore, for any $t_0 \ge 1$, we have 
        \begin{align*}
            \phi(2^k t_0) \le \phi(t_0)^{2^k}\,,
        \end{align*}
        whence if $\phi(t_0)<1/e$, then for $r=2^k$, we have $\phi(r t_0)\le e^{ - r}$. From there, using the definition of $\phi(t)$ in terms of $\rho(t)$, we see that $\rho(r t_0) \le e^{ - 2r}$, whence $\rho(t)\le e^{ - 2 t/t_0}$ for $t=2^k t_0$. The fact that $\rho(t)$ is monotone decreasing in time implies the bound $\rho(t) \le e^{ -t/t_0}$ for all $t \ge t_0$. 
        

        The last step is to show that $\phi(t_0)<1/e$ for some $t_0$ larger than $\max\{6, T_0(q,\Delta,\delta,\eta)\}$. Towards this purpose, notice that by the update rule~\eqref{eq:FK dynamics-update-rule}, 
        \begin{align*}
            \rho(s) \le (1-\ps) + \mathbb P(\text{Pois}(s) =0) = (1-\ps) + e^{ - s}\,.
        \end{align*}
        There exists $s_0$ independent of everything else such that  $e^{\sqrt{s}}(2e^{-s}+e^{-s/2})^{1/2}$ is less than $1/e$ for all $s>s_0$ because $e^{\sqrt{t}}(2e^{-t}+ e^{-t/2})^{1/2}$ is at most $3e^{ - t/4 + \sqrt{t}}$, say. Let $p_0(q)$ be large enough that $(1-\ps) \le e^{ - s_0}$ for all $p\ge p_0$. Then for all $p\ge p_0$ and $t_0\ge s_0$, $\rho(t_0)\le 2e^{ - t_0}$ and we obtain the claimed $\phi(t_0) <1/e$. 
        \end{proof}

        \begin{proof}[\textbf{\emph{Proof of Theorem~\ref{thm:subexp-mixing}}}]
Under the monotone grand coupling, we have for every initial state $\omega_0$, 
    \begin{align*}
        \max_{\omega_0} \|\mathbb P(X_t^{\omega_0} \in \cdot) -\pi\|_\tv\le \sum_{e\in E(G)} \mathbb P(X_t^{\omega_0}(e) \ne X_t^\pi(e)) \le \sum_{e\in E(G)} \mathbb P(X_t^\one(e) \ne X_t^\zero(e))\,.
    \end{align*}
    In turn, by monotonicity, the right-hand side is at most  
    \begin{align*}
        \max_{\omega_0} \|\mathbb P(X_t^{\omega_0} \in \cdot) -\pi\|_\tv \le \sum_{e\in E(G)} \big(\mathbb P(X_t^\one(e) = 1) - \mathbb P(X_t^\zero(e) = 1)\big)\,.
    \end{align*}
    Let $t= C_1 \log n$ for $C_1$ a large enough constant (depending on $q,\Delta,\delta,\eta$). Then by Proposition~\ref{prop:exp-relaxation-to-equilibrium}, each term in the right-hand side is bounded by $n^{-4}$ for large enough $n$. Since there are at most $\Delta n\le n^2$ many summands, the sum above is $o(1)$, implying mixing in $O(\log n)$ time. This gives $O(n\log n)$ mixing time for the discrete-time FK dynamics as described in the preliminaries.

    The result for the SW dynamics follows from the comparison result of~\cite{Ullrich-random-cluster}.
\end{proof}

\section{Spatial and temporal mixing on trees with $r$-wired boundary}\label{sec:trees}
Our next goal in the paper is to establish Theorem~\ref{thm:exp-growth-mixing} concerning FK dynamics on treelike expanders. Recursive mixing time arguments based on disagreement percolation, like those in the previous section, are known to fail on graphs with exponential volume growth. At the same time, localizing the dynamics can be difficult in treelike graphs because there is no weak spatial mixing when $p$ is close to $1$; see the discussion at the beginning of Section~\ref{sec:tree-spatial-mixing} for more details. In this section, we define a class of boundary conditions that are sufficiently ``wired" to support a notion of weak spatial mixing with respect to the wired boundary conditions. This class of boundary conditions captures the boundary conditions induced by the FK dynamics configuration on a treelike ball centered at a vertex of an expander graph after a short burn-in. 

This section focuses on general rooted trees $\cT_h = (V(\cT_h),E(\cT_h))$ having depth $h$, minimum internal degree $3$  and maximum degree $\Delta$. For any $m\le h$, $\cT_m$ will denote the tree given by truncating $\cT_h$ at depth $m$. The boundary $\partial \cT_m$ is the set of vertices of $\cT_m$ at depth $m$. For any vertex $w\in V(\cT_h)$, we use $\cT_{h,w}$ to denote the sub-tree of $\cT_h$ rooted at $w$, with boundary $\partial \cT_{h,w} = \partial \cT_h \cap \cT_{h,w}$.

\begin{definition}
    A boundary condition $\xi$ of $\cT_h$ is \emph{single-component} if the boundary partition corresponding to $\xi$ has at most one non-singleton element; we call this its \emph{wired component}.
\end{definition}

\begin{definition}
    A distribution $\mathsf{P}$ over boundary conditions $\xi$ on $\cT_h$ is $r$-wired if it is supported on single-component boundary conditions, and the distribution of the wired component of $\xi$ stochastically dominates the distribution over random subsets $A\subset \partial \cT_h$ in which each vertex of $\partial \cT_h$ is included in $A$ with probability $r$ independently (the partial order being the natural one on vertex subsets).
\end{definition}

The following shows that except with \emph{double-exponentially} small probability, the random-cluster model on $\cT_h$ with $r$-wired boundary conditions satisfies weak spatial mixing with respect to the all-wired boundary condition (the TV distance between the two decays exponentially in the distance from the boundary).   

\begin{lemma}\label{lem:tree-spatial-mixing}
Let $\mathsf{P}$ be $r$-wired and let $\xi\sim \mathsf{P}$. Then, with $\mathsf{P}$-probability $1- e^{ - c_{p,r} (1.1)^h}$, we have 
\begin{align*}
    \|\pi_{\cT_{h}}^\xi(\omega(\cT_{h/2}) \in \cdot) - \pi_{\cT_h}^{\one}(\omega(\cT_{h/2})\in \cdot) \|_\tv \le e^{ - c_{p,r} h}\,,
\end{align*}
for some $c_{p,r} = c_{p,r}(q,\Delta)$, which is positive as long as $p>p_0(q,\Delta)$ and $r>r_0(q,\Delta)$ for suitable $p_0(q,\Delta) , r_0(q,\Delta)  \in (0,1)$, 
and which goes to $\infty$ as $p,r \to 1$. 
\end{lemma}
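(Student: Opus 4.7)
My plan is to establish the lemma in two stages: first, a concentration argument isolating a high-probability structural event for $\xi$, and second, a random-cluster tree recursion that controls the induced boundary conditions on $\partial \cT_{h/2}$.

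\textbf{Stage 1 (Structural event).} For every internal vertex $w$ of $\cT_h$ at depth $d(w)\le h/2$, let $N_w$ be the number of leaves of the subtree $\cT_{h,w}$ lying in the wired component of $\xi$. Because the minimum internal degree is $3$, we have $|\partial\cT_{h,w}|\ge 2^{h-d(w)}$. The $r$-wired stochastic domination ensures that $N_w$ dominates a $\mathrm{Binomial}(|\partial\cT_{h,w}|,r)$ random variable, so a Chernoff bound yields $\mathsf{P}(N_w<r|\partial\cT_{h,w}|/2)\le e^{-c_r\, 2^{h-d(w)}}$. A union bound over the at most $2\Delta^{h/2}$ such vertices produces the event $\mathsf G=\{N_w\ge r|\partial\cT_{h,w}|/2 \text{ for all such }w\}$ with $\mathsf P(\mathsf G^c)\le e^{-c'_{p,r}(1.1)^h}$, using that $2^{h/2}$ dominates $(1.1)^h$.

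\textbf{Stage 2 (Reveal the outside; compare induced boundaries).} Work on $\mathsf G$. By the domain Markov property, under $\pi^\xi_{\cT_h}$ the conditional distribution of $\omega(\cT_{h/2})$ given $\omega(\cT_h\setminus \cT_{h/2})$ is $\pi^{\xi''}_{\cT_{h/2}}$, where $\xi''$ is the partition of $\partial\cT_{h/2}$ whose wired class collects the vertices connected through the outer configuration to the wired component of $\xi$; likewise $\pi^\one_{\cT_h}$ induces $\xi''_\one$. Both are single-component with $\xi''\le\xi''_\one$, and the TV distance between the two marginals on $\cT_{h/2}$ is bounded by twice the expected symmetric difference between their wired classes. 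By a union bound over $v\in\partial\cT_{h/2}$, it therefore suffices to show that for a fixed such $v$, the ``is in wired class'' event has probabilities under $\pi^\xi$ and $\pi^\one$ that differ by at most $\Delta^{-h/2}\, e^{-c_{p,r}h}$.

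\textbf{Stage 3 (Tree recursion).} The marginal of $\omega(\cT_{h,v})$ under $\pi^\xi_{\cT_h}$ is FKG-sandwiched between the random-cluster measures on $\cT_{h,v}$ with boundary $\xi|_{\partial\cT_{h,v}}$ and either $\{v\}$ wired or $\{v\}$ free; the same holds for $\pi^\one_{\cT_h}$ with all-wired leaves. The probability that $v$ is connected to the wired leaves through $\cT_{h,v}$ obeys the standard random-cluster tree recursion whose Bernoulli-percolation skeleton (via the monotone comparison $\pi_{p,q}\succeq \pi_{\ps,1}$) is $\delta\mapsto ((1-\ps)+\ps\,\delta)^2$. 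This map has an attractive fixed point $\delta^*=((1-\ps)/\ps)^2$ with derivative $2(1-\ps)$ there. Starting from $\delta_0=0$ (all-wired) or $\delta_0\le 1-r/2$, both sequences reach a neighborhood of $\delta^*$ within $O(\log\log(1/r))$ iterations (by squaring-type contraction $\delta_{d+1}\approx \delta_d^2$ in the large-$\ps$ regime), after which the two trajectories differ by at most a factor contracting as $2(1-\ps)\Delta$ per level. Summing over the $\le \Delta^{h/2}$ vertices of $\partial\cT_{h/2}$ yields the claimed TV bound whenever $\Delta^2(1-\ps)$ is sufficiently small, with $c_{p,r}=\tfrac12\log\frac{1}{2(1-\ps)\Delta^2}$ going to $\infty$ as $p\to 1$; the threshold $r_0$ arises from needing the burn-in to finish before depth $h/2$.

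\textbf{Main obstacle.} The principal technical challenge is in Stage 3: extending the Bernoulli skeleton recursion to the full random-cluster model on the tree. The cluster-count reweighting couples subtrees in a way that the clean map $\delta\mapsto((1-\ps)+\ps\delta)^2$ must be replaced by the analogous recursion for the random-cluster ``effective connection probability,'' and one must verify that the analogous attractive fixed point remains stable with contraction rate $O((1-\ps)\Delta)$ under the single-component constraint on the boundary. The single-component hypothesis is essential: without it, multiple disconnected ``wired'' classes would introduce additional fixed points of the recursion and spoil the conclusion --- indeed, this is exactly why free-versus-wired WSM fails on trees at large $p$, while the refined statement for $r$-wired-versus-all-wired holds.
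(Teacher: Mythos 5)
Your overall plan (a high-probability structural event for $\xi$, followed by a percolation-type comparison between $\pi^{\xi}$ and $\pi^{\one}$) has the right flavor, but it is a genuinely different strategy from the paper's, and as written it has two substantive gaps, one of which the paper is specifically engineered to avoid.

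\textbf{Gap in Stage 2.} After revealing the outer configuration, it is not true that the induced boundary condition $\xi''$ on $\partial\cT_{h/2}$ is single-component: vertices of $\partial\cT_{h/2}$ can be connected to each other through $\omega(\cT_h\setminus\cT_{h/2})$ without meeting the wired component of $\xi$, producing several non-singleton classes. More importantly, the step ``TV distance between the two marginals on $\cT_{h/2}$ is bounded by twice the expected symmetric difference between their wired classes'' is not a valid inequality. Two induced boundary conditions that differ on even a small subset of $\partial\cT_{h/2}$ can produce marginals on $\cT_{h/2}$ whose total-variation distance is order one --- this is exactly the failure of weak spatial mixing between wired and free on trees that the lemma is up against. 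You would need a separate spatial-mixing estimate \emph{inside} $\cT_{h/2}$ to convert a per-vertex boundary discrepancy into a TV bound, and no such estimate is available here. The paper circumvents this entirely: its revealing scheme (Lemma~\ref{coupling-with-separating-set}) stops at the lowest \emph{wired separating set}, a vertex antichain between depth $h/2$ and $h$ all of whose members are connected down to $\xi$'s wired component. Above such a set, both $\pi^\xi$ and $\pi^\one$ see \emph{exactly the same} (fully wired) induced boundary condition, so the coupling is perfect on $\cT_{h/2}$ --- no ``close boundary conditions implies close marginals'' step is needed, only the probability of the separating set's existence.

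\textbf{Gap in Stage 3.} You flag this one yourself: the clean recursion $\delta\mapsto((1-\ps)+\ps\delta)^2$ is for Bernoulli percolation, and extending it to the random-cluster tree recursion is nontrivial; that extension is not carried out. The paper avoids ever analyzing the random-cluster measure recursively. Because the event $\cS_{h,\xi}$ of having a wired separating set is \emph{increasing}, one can use $\pi^\xi_{\cT_h}\succeq\widetilde\pi_{\cT_h}=\pi_{\cT_h,\tilde p,1}$ to reduce to Bernoulli percolation outright, and then estimate $\widetilde\pi(\cS^c_{h,\xi})$ by a union bound over downward paths with a per-level failure probability controlled by the structural event (P1). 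This is the core reason the paper introduces wired separating sets: they turn an intractable random-cluster correlation-decay question into a monotone Bernoulli-percolation computation.

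\textbf{Secondary issue in Stage 1.} Your event $\mathsf{G}$ controls the total number $N_w$ of wired leaves in each $\cT_{h,w}$, but the recursion in Stage~3 needs a lower bound on the \emph{per-level} connection probability through each one-child-removed slab $\cT_{v_i}\setminus\cT_{v_{i+1}}$ along every root-to-leaf path (cf.\ the paper's event $\mathcal E_\gamma$ and Fact~\ref{fact:branching-process}). Knowing the total count of wired leaves does not prevent them from being concentrated in a single subtree, which would starve the other branches. You do not get to assume independent Ber$(r)$ leaves in the recursion: $r$-wiredness is only a stochastic domination, so you must phrase all inputs to the recursion as increasing events and compare to the Ber$(r)$ lower bound --- which is exactly what the paper's $\mathcal E_\gamma$/$c$-good formulation and its Markov-inequality step are set up to do.

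In short, the ideas in your sketch point in the right direction (monotonicity, Bernoulli skeleton, high-probability regularity of $\xi$), but without the wired separating set your Stage~2 reduction is false, and your Stage~3 recursion is not justified for the random-cluster model. The paper's two lemmas (\ref{coupling-with-separating-set} and \ref{lem:xi-not-c-good}) supply precisely the missing pieces.
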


\begin{remark}
    The double exponential concentration under $\mathsf{P}$ in Lemma~\ref{lem:tree-spatial-mixing} is not strictly needed for the results in this paper. A single-exponential concentration under $\mathsf{P}$ would suffice and would be easier to establish by averaging over $\xi\sim \mathsf{P}$ and then applying Markov's inequality. However, we include this stronger form  since it provides insight into the good qualities of $r$-wired boundary conditions and could be used to get an improved mixing time bound for the FK dynamics on trees with $r$-wired boundary. 
    Unfortunately, with our current methods in Section~\ref{sec:treelike-expanders}, this improvement would not translate into better bounds for the FK dynamics on treelike graphs, so we do not pursue them.
\end{remark}

\subsection{Spatial mixing on trees with $r$-wired boundary conditions}\label{sec:tree-spatial-mixing}
Our first aim in this section is to establish Lemma~\ref{lem:tree-spatial-mixing}. 
Spatial mixing in the traditional sense, where one takes a maximum over the boundary conditions on the tree, does \emph{not} hold for large $p$, even if it is very close to $1$. This can be seen by considering $\cT_h$ with wired vs.\ free boundary conditions, and noticing that the marginal 
of any edge in the tree with the free boundary condition is $\mbox{Ber}(\ps)$, whereas the marginal of an edge in 
the wired tree gives at least constant probability to that edge being distributed as $\mbox{Ber}(p)$, and otherwise, as $\mbox{Ber}(\ps)$, so the total-variation distance on that edge does not go to zero as $h\to\infty$. 
Our solution to this issue is to restrict attention to $r$-wired boundary conditions and establish that, at least among these boundary conditions, the random-cluster model exhibits spatial mixing on $\cT_h$. Considering only such boundary conditions will suffice for us, 
since these are the boundary conditions that appear after a burn-in period of the FK dynamics 
on treelike expander graphs.  We note that with \emph{fully} wired boundary conditions on trees,~\cite{Jonasson} showed decay of correlations to establish uniqueness at $p$ close to $1$.

The mechanism for coupling random-cluster configurations with $r$-wired and wired boundary conditions is based on what we call wired separating sets. These will be a set of vertices that are all connected down to the wired component of $\xi$ in $\partial \cT_h$ and therefore wired together; as such, they separate the influence of the boundary condition of $\partial \cT_h$ from, say, $\partial \cT_{h/2}$. 

\begin{definition}\label{def:separating-set}
    A separating set in $\cT_h$ is a set of vertices $S \subset V(\cT_h)\setminus V(\cT_{h/2})$ such that every path from $\partial \cT_{h/2}$ to $\partial \cT_h$ must intersect $S$. 
    A configuration $\omega \subset E(\cT_h)$ has a \emph{wired separating set} if there exists a separating set $S$ such that every vertex $v\in S$ is connected in $\omega(\cT_{h,v})$ to a vertex $u\in \partial \cT_{h,v}$ belonging to the wired component of $\xi$. 
    Let $\mathcal S_{h,\xi}$ be the event that $\omega$ has a wired separating set in $\cT_h$ with boundary condition~$\xi$. 
\end{definition}

The following lemma shows how the event of having a wired separating set governs 
the probability of coupling random-cluster configurations to $\omega \sim \pi_{\cT_h}^\one$ in $\cT_{h/2}$. 

\begin{lemma}\label{coupling-with-separating-set}
    For any single-component boundary condition $\xi$ on $\cT_h$, 
     \begin{align*}
        \|\pi_{\cT_{h}}^\xi (\omega(\cT_{h/2}) \in \cdot) - \pi_{\cT_{h}}^\one(\omega(\cT_{h/2}) \in \cdot) \|_\tv &\le \pi_{\cT_h}^\xi(\mathcal S_{h,\xi}^c)\,.
    \end{align*}
\end{lemma}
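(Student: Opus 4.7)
The plan is to exhibit a coupling of $\omega\sim\pi_{\cT_h}^\xi$ and $\omega'\sim\pi_{\cT_h}^\one$ under which $\omega(\cT_{h/2}) = \omega'(\cT_{h/2})$ on the event $\mathcal S_{h,\xi}$; the coupling inequality for total variation then immediately yields the claim. The coupling will be constructed via a two-stage sampling procedure that uses the domain Markov property of the random-cluster model, together with a key structural fact about how wired separating sets make $\xi$ and $\one$ look the same from the interior.

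The structural backbone is the observation that on $\mathcal S_{h,\xi}$, the boundary condition induced on $\partial\cT_{h/2}$ by the exterior $\omega_{\text{ext}} := \omega|_{E(\cT_h)\setminus E(\cT_{h/2})}$ combined with $\xi$ is identical to the one induced by $\omega_{\text{ext}}$ combined with $\one$. Indeed, for each $u\in\partial\cT_{h/2}$ the subtrees $\cT_{h,u}$ and $\cT_{h,u'}$ are edge-disjoint for $u\neq u'$, so whether $u$ lies in the non-singleton block of the induced partition is determined entirely by which leaves of $\partial\cT_{h,u}$ are reachable from $u$ via $\omega(\cT_{h,u})$. On $\mathcal S_{h,\xi}$, the set $S_u:=S\cap V(\cT_{h,u})$ separates $u$ from $\partial\cT_{h,u}$, and Definition~\ref{def:separating-set} forces every $v\in S_u$ to be connected in $\omega(\cT_{h,v})$ to a leaf in the wired component of $\xi$. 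Any path in $\omega(\cT_{h,u})$ from $u$ to $\partial\cT_{h,u}$ must cross some $v\in S_u$, which in turn drags along a connection to a wired-component leaf; hence $u$ is connected to \emph{some} leaf of $\partial\cT_{h,u}$ if and only if it is connected to a wired-component leaf of $\xi$, so the two induced partitions coincide.

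With this identification, I would realize the coupling by first sampling $\omega_{\text{ext}} \sim \pi_{\cT_h}^\xi|_{\text{ext}}$ and then drawing $\omega(\cT_{h/2})\sim\pi_{\cT_{h/2}}^{\phi(\omega_{\text{ext}})}$, where $\phi(\cdot)$ is the $\xi$-induced-partition map. By the structural fact, on $\mathcal S_{h,\xi}$ we have $\phi(\omega_{\text{ext}}) = \psi(\omega_{\text{ext}})$ with $\psi$ the analogous $\one$-map, so the conditional law of $\omega(\cT_{h/2})$ given $\omega_{\text{ext}}$ matches the law one would obtain by sampling under $\pi_{\cT_h}^\one$ conditional on the same induced partition. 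To pass from this matching of conditional laws to the coupling we want, invoke the domain Markov property in its mixture form: both $\pi_{\cT_h}^\xi(\omega(\cT_{h/2})\in\cdot)$ and $\pi_{\cT_h}^\one(\omega(\cT_{h/2})\in\cdot)$ are mixtures of the common family $\{\pi_{\cT_{h/2}}^\eta\}_\eta$ against mixing distributions $q^\xi$ and $q^\one$ respectively (the pushforwards of the exterior marginals under $\phi$ and $\psi$), and by convexity of total variation the TV distance between the interior marginals is bounded by $\|q^\xi-q^\one\|_\tv$.

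The main obstacle is the last step: bounding $\|q^\xi-q^\one\|_\tv$ by $\pi_{\cT_h}^\xi(\mathcal S_{h,\xi}^c)$ without paying the potentially much larger cost $\|\pi_{\cT_h}^\xi|_{\text{ext}}-\pi_{\cT_h}^\one|_{\text{ext}}\|_\tv$. The resolution is to couple only at the level of the induced partition rather than the raw exterior: since $\phi(\omega_{\text{ext}}) = \psi(\omega_{\text{ext}})$ holds pointwise on $\mathcal S_{h,\xi}$, one can design a coupling of $(\omega_{\text{ext}}^\xi,\omega_{\text{ext}}^\one)$ whose image under the respective induced-partition maps agrees whenever $\mathcal S_{h,\xi}$ occurs (for instance, by revealing $\omega_{\text{ext}}^\xi$ first and, on $\mathcal S_{h,\xi}$, resampling $\omega_{\text{ext}}^\one$ conditioned to have the same induced partition, then absorbing the mismatch off $\mathcal S_{h,\xi}$). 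This careful separation between the raw exterior and its induced-partition image is the technical crux, and once it is in place the total mismatch of $q^\xi$ and $q^\one$ is at most $\pi_{\cT_h}^\xi(\mathcal S_{h,\xi}^c)$, completing the proof.
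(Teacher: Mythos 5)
The setup and the structural observation are correct: you rightly identify that for a \emph{fixed} exterior configuration $\omega_{\mathrm{ext}}\in\mathcal S_{h,\xi}$, the partition of $\partial\cT_{h/2}$ induced by $(\omega_{\mathrm{ext}},\xi)$ coincides with that induced by $(\omega_{\mathrm{ext}},\one)$, and the mixture decomposition via the domain Markov property is legitimate. These two steps capture the right idea.

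The gap is in the third step, and unfortunately it is where all the work lives. You need a coupling of $\omega_{\mathrm{ext}}^\xi\sim\mu^\xi$ and $\omega_{\mathrm{ext}}^\one\sim\mu^\one$ under which $\phi(\omega_{\mathrm{ext}}^\xi)=\psi(\omega_{\mathrm{ext}}^\one)$ off an event of $\mu^\xi$-probability at most $\mu^\xi(\mathcal S_{h,\xi}^c)$. The ``reveal $\omega_{\mathrm{ext}}^\xi$ in full, then resample $\omega_{\mathrm{ext}}^\one$ conditioned to match the partition, absorb the mismatch off $\mathcal S_{h,\xi}$'' construction is only a valid coupling if, for every partition $\eta$,
\[
q^{\one}(\eta)\;\ge\;\mu^\xi\big(\phi^{-1}(\eta)\cap\mathcal S_{h,\xi}\big)\,,
\]
so that there is enough mass under $q^\one$ to ``absorb.'' You assert this implicitly but do not prove it, and it is by no means automatic: $\psi^{-1}(\eta)$ is a level set, not a monotone event, so the stochastic domination $\mu^\one\succeq\mu^\xi$ does not help directly, and the two sides involve different events under different measures. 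Also note that the pointwise identity $\phi=\psi$ on $\mathcal S_{h,\xi}$ is a statement about the \emph{same} $\omega_{\mathrm{ext}}$ plugged into both maps; it does not by itself say anything about the relationship between $\phi(\omega_{\mathrm{ext}}^\xi)$ and $\psi(\omega_{\mathrm{ext}}^\one)$ when $\omega_{\mathrm{ext}}^\xi$ and $\omega_{\mathrm{ext}}^\one$ are drawn from different measures. In short, the inequality you need is essentially equivalent to the lemma, so the argument is circular as written.

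The paper closes this gap by not revealing the whole exterior at once. It runs an adaptive, leaves-to-root revealing procedure under the \emph{monotone} grand coupling of $\pi^\xi$ and $\pi^\one$. Two ingredients you omit are doing the real work there. First, the monotonicity $\omega_\one\ge\omega_\xi$ lets you transfer the wiring condition: if a vertex $w$ is connected to the wired component of $\xi$ in $\omega_\xi(\cT_{h,w})$, then it is connected to some leaf (hence to $\partial\cT_h$, hence wired) in $\omega_\one(\cT_{h,w})$. Second, the procedure stops as soon as it has discovered the lowest wired separating set, and \emph{before} touching $\cT_{h/2}$; at that point the unrevealed region has wired boundary conditions under both $\xi$ and $\one$, so its configuration can be sampled identically for the two chains. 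Revealing the whole exterior first, as you propose, destroys this: once the entire exterior is exposed the induced partitions can differ (under any coupling) even on $\mathcal S_{h,\xi}$, because the exterior configurations themselves differ, and one then has to prove the domination inequality above by some independent argument — which is precisely what the adaptive revealing accomplishes. I would rewrite step 3 to use the monotone coupling and the leaves-to-root revealing that stops at the lowest wired separating set, as in the paper.
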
 

\begin{proof}
    We construct a monotone coupling for
    $\pi_{\cT_h}^\xi$ and $\pi_{\cT_h}^\one$
    such that if $(\omega_\xi,\omega_\one)$ is sampled from this coupling, 
    then
    $\omega_\xi \sim \pi_{\cT_h}^\xi$, $\omega_\one\sim \pi_{\cT_h}^\one$ and
     \begin{align*}
         \{\omega_\xi \in \cS_{h,\xi}\} \implies \{\omega_\xi (\cT_{h/2}) = \omega_\one (\cT_{h/2})\}.
     \end{align*}
    Our construction relies on revealing the values of $(\omega_\xi,\omega_\one)$ on an edge set $\mathcal R$ under a monotone coupling, where crucially, $\mathcal R$ will be designed to be the set of edges in the ``lowest'' wired separating set (if one exists).
    We construct $\omega_\xi$ and $\omega_{\one}$ as follows (and refer to Figure~\ref{fig:tree-revealing-process} for a depiction):
    \begin{enumerate}
    \item Initialize $\cP_0$ as the parents of $\partial \cT_h$, and $\mathcal R_0 = \emptyset$.
    \item Starting with $i=1$, pick a vertex $w_i \in \cP_{i-1}$, and sample the configurations $(\omega_\xi(\cT_{h,w_i}),\omega_{\one}(\cT_{h,w_i}))$ 
    from the monotone coupling between the marginals $$\pi_{\cT_h}^\xi(\omega_\xi(\cT_{h,w_i})\in \cdot \mid \omega_\xi(\mathcal R_{i-1}))\,, \qquad \mbox{and}\qquad \pi_{\cT_h}^\one(\omega_\one(\cT_{h,w_i})\in \cdot  \mid \omega_1(\mathcal R_{i-1}))\,.$$
    \item Let $\mathcal R_i = \mathcal R_{i-1} \cup E(\cT_{h,w_i})$ and form $\omega_\xi(\mathcal R_i)$ and $\omega_{\one}(\mathcal R_i)$ by adding the configurations $\omega_\xi(\cT_{h,w_i})$ and $\omega_{\one}(\cT_{h,w_i})$, respectively.
    \item If $w_i $ is connected to the wired component of $\xi$ in $\omega_\xi(\cT_{h,w_i})$, then let $\cP_{i} = \cP_{i-1}\setminus \cT_h(w_i )$; else, let $\cP_i = (\cP_{i-1} \setminus \cT_h(w_i )) \cup \{w_i '\}$ where $w_i '$ is the parent of $w_i $. 
    \end{enumerate}
    While $i$ is such that $\cP_{i-1}$ is non-empty, $\mathcal R_i \setminus \mathcal R_{i-1} \ne \emptyset$. This is because the edges from $w_i$
    to its children will be in $\mathcal R_i$ but not in $\mathcal R_{i-1}$ (if they were in $\mathcal R_{i-1}$ then $w_i \in \cT_{w_j}$ for some $j<i$ and $w_i$ would have been removed from $\cP_{j}$). Therefore, the revealing process will terminate after a finite number of steps, and we can call $\cR = \cR_f$ if $f$ is the first index for which $\cP_f = \emptyset$.  

    \begin{figure}
        \centering
        \begin{tikzpicture}
        \node at (-4,0) {
        \includegraphics[width = .45\textwidth]{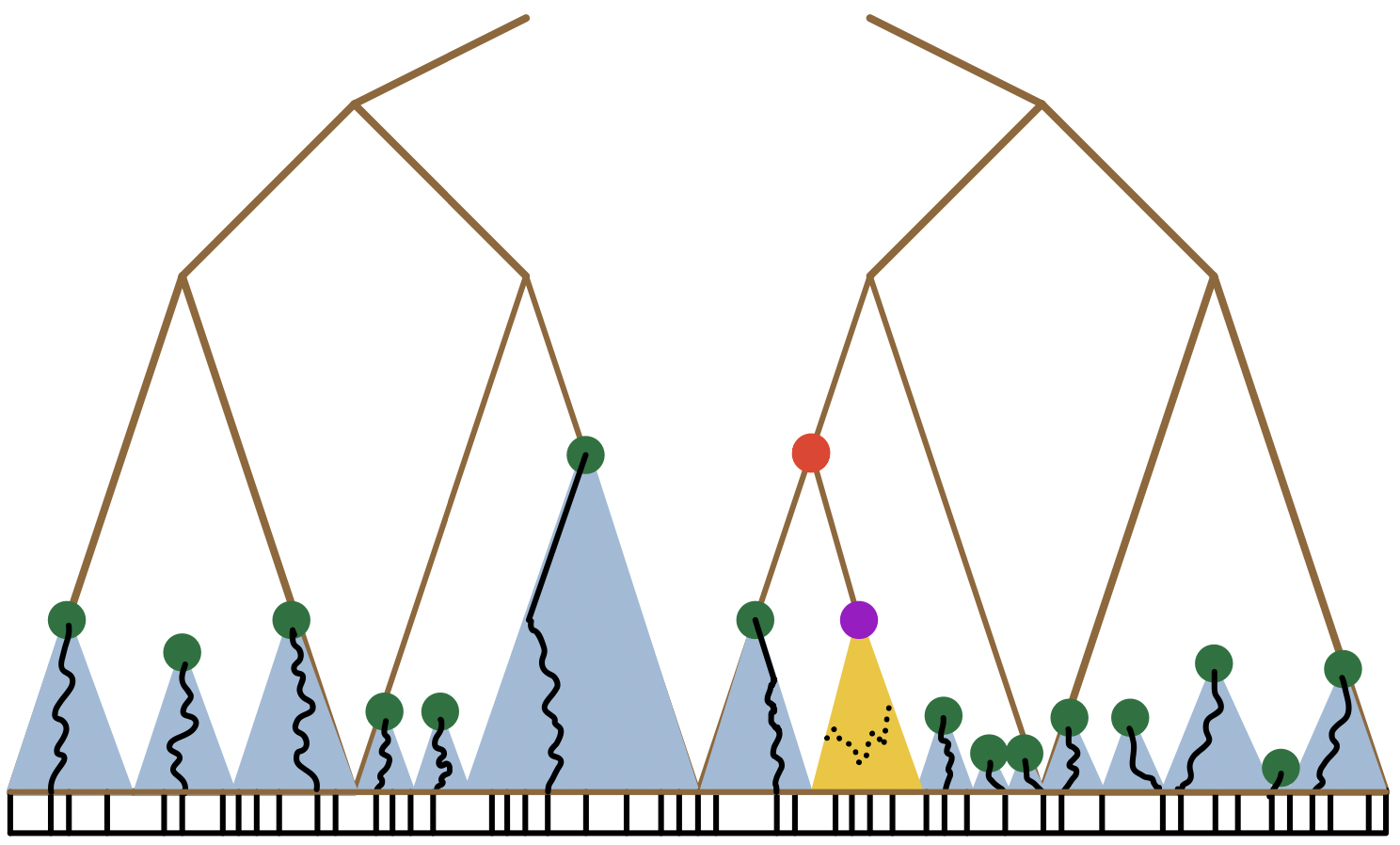}};
        \node at (4,0) { 
        \includegraphics[width = .45\textwidth]{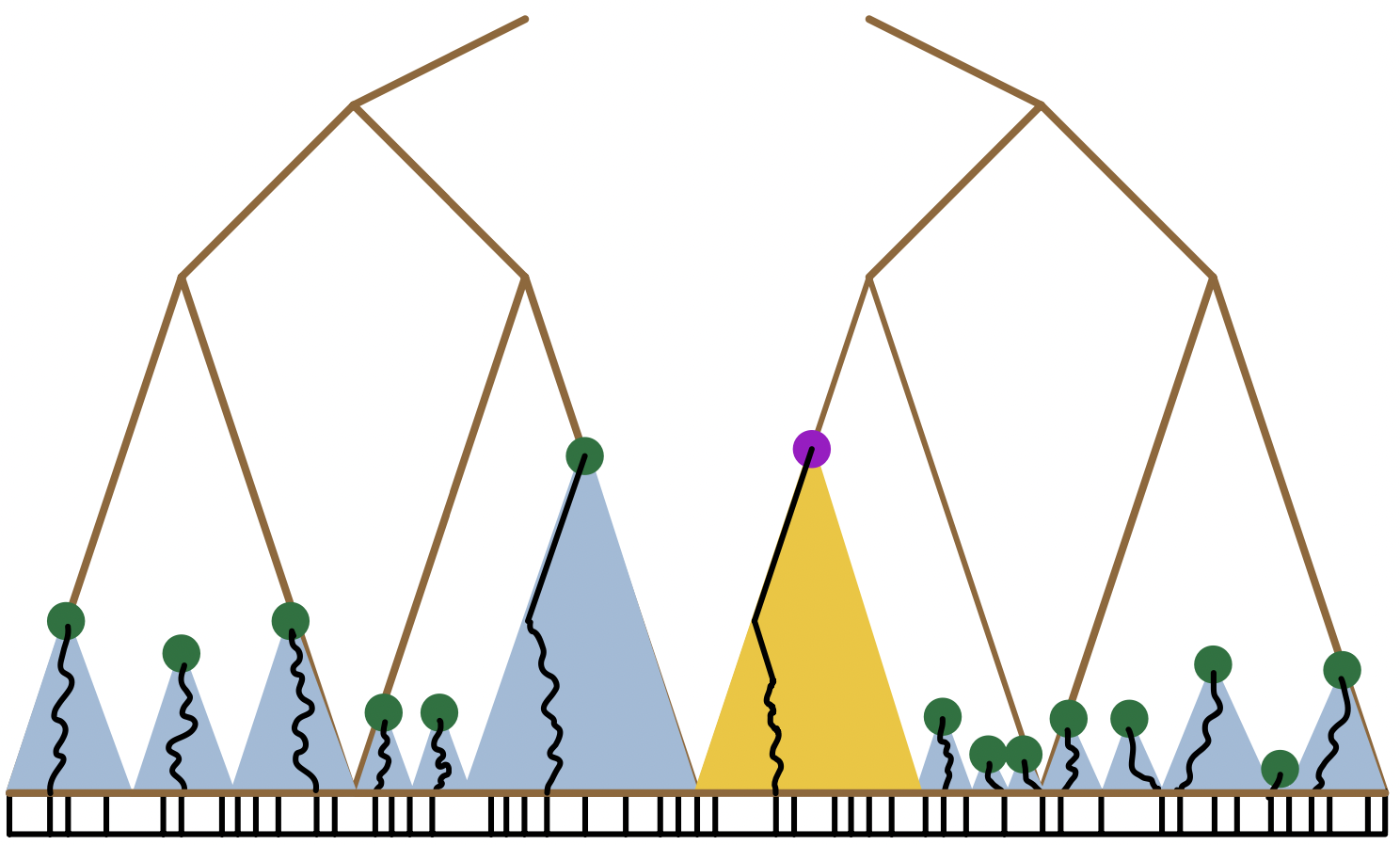}};
        \node[font = \tiny, color = red] at (-3.15,-.1) {$w_i'$};
        \node[font = \tiny, color = blue!50!red] at (-2.85,-1.075) {$w_i$};
        \node[font = \tiny, color = blue!50!red] at (5.05,-.13) {$w_{i+1}$};
        \node[font = \tiny, color = black] at (-4,-2.45) {$\xi$};
        \node[font = \tiny, color = black] at (4,-2.45) {$\xi$};
        \end{tikzpicture}
        \caption{Two steps of the revealing process used in Lemma~\ref{coupling-with-separating-set}. Left: the configurations on the sub-tree of $w_i\in \mathcal P_{i-1}$ are revealed, and in the configuration with $\xi$-boundary conditions, $w_i$ is not wired down to $\xi$. Thus, its parent $w_i'$ is added to $\mathcal P_i$. In the next step, this vertex is $w_{i+1}$, and when the remainder of its sub-tree is revealed to indeed include a wiring to $\xi$, the vertex is removed from $\mathcal P_i$ but its parent is no longer added.}
        \label{fig:tree-revealing-process}
    \end{figure}

    Now, consider the subset $\partial_{\textsc{top}} \cR$ of vertices of $\cR$ whose parents are not in $\cR$.
    As long as the process terminates with $\cR \cap E(\cT_{h/2}) = \emptyset$, the vertices in $\partial_{\textsc{top}}\mathcal R$ will be connected to the wired component of $\xi$ in $\omega_\xi(\cT_{h,w})$, and so they will form a wired separating set. Conversely, if there exists a wired separating set $S$ in $\omega_\xi(\cT_{h,w})$, no parent of any vertex in $S$ will ever be added to $\cP$, and therefore, the vertices from $\partial_{\textsc{top}}$ will form exactly the lowest wired separating set. 
    
    Upon termination of the iterative procedure above, we can then sample $\omega_\xi(\cT_h\setminus \cR) = \omega_\one(\cT_h\setminus \cR)$, since both $\omega_\xi(\mathcal R)$ and $\omega_\one(\mathcal R)$ induce wired boundary conditions on $E(\cT_h)\setminus \mathcal R$.  
    Therefore, under this coupling, we always have $\omega_\xi(\cT_h\setminus \cR) = \omega_\one(\cT_h\setminus \cR)$. 
    Since the process terminates at the lowest wired separating set of $\omega_\xi$, on the event $\omega_\xi \in \cS_{h,\xi}$, necessarily $\cT_h \setminus \cR$ contains all of $\cT_{h/2}$. As such, using $\mathbb P$ to denote the probability under the coupling we just designed, 
    \begin{align*}
        \|\pi_{\cT_{h/2}}^\xi - \pi_{\cT_{h/2}}^\one \|_\tv \le \mathbb P(\cR \cap \cT_{h/2} \ne \emptyset) \le \mathbb P(\omega_\xi \in \cS_{h,\xi}^c) = \pi_{\cT_h}^\xi(\mathcal S_{h,\xi}^c)\,,
    \end{align*}
    as claimed.
\end{proof}

\subsubsection{Good boundary conditions}
Our aim is now to control the probability of $\cS_{h,\xi}^c$ under a random $r$-wired boundary condition $\xi \sim \mathsf{P}$. Recalling the definition of $\cS_{h,\xi}$, notice that it is an increasing event in the random-cluster configuration. Since the random-cluster measure
with parameters $p$ and $q$ stochastically dominates independent percolation with parameter $\ps = \frac{p}{q(1-p)+p}$ configuration (i.e., the random-cluster measure with parameters $q=1$ and $\ps$), it will suffice for us to consider the probability of $\cS_{h,\xi}$ under the product measure $\widetilde \pi_{\cT_h} = \pi_{\cT_h,\tilde p,1}^\xi$ for some $\tilde p\le \ps$ still going to $1$ with $p$ (e.g., $\tilde p = 2\ps -1$). 

\begin{definition}
A boundary condition $\xi$ on $\cT_h$ is called $c$-good if 
\begin{align*}
    \widetilde \pi_{\cT_h}(\widetilde \omega \in \mathcal S_{h,\xi}^c) \le \exp( - ch)\,.
\end{align*}
\end{definition}

Notice that if $\xi$ is $c$-good, then any single-component boundary condition $\xi'\ge \xi$ will also be $c$-good, since any $\widetilde \omega \in \mathcal S_{h,\xi}$ will also be in $\mathcal S_{h,\xi'}$. Therefore, the event $\{\xi \, \mbox{is $c$-good}\}$ is itself increasing in the partial order on subsets of $\partial \cT_h$. In particular, if $\mathsf{P}$ is $r$-wired, then 
$$
\mathsf{P}(\xi \mbox{ is $c$-good}) \ge \mathsf{P}_{\mathrm{Ber}(r)}(\xi \mbox{ is $c$-good})
$$
where $\mathsf{P}_{\mathrm{Ber}({r})}$ is the distribution over boundary conditions on $\partial \cT_h$ where the wired component contains each vertex independently with probability ${r}$. 
Given this, the following lemma implies Lemma~\ref{lem:tree-spatial-mixing}. 

\begin{lemma}\label{lem:xi-not-c-good}
    Suppose $\xi\sim \mathsf P_{\mathrm{Ber}(r)}$. There exists $c = c(p,q,r)$ going to $\infty$ as $p,r\to 1$, such that  
    \begin{align*}
        \mathsf P(\xi \mbox{ is not $c$-good}) \le \exp( - c (1.1)^h)\,.
    \end{align*}
\end{lemma}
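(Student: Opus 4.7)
The plan is to use the factorization $\widetilde\pi(\cS_{h,\xi})=\prod_{v\in\partial\cT_{h/2}}\widetilde\pi(\cA_v\mid\xi)$, which holds by independence of $\widetilde\omega$ across the disjoint subtrees $\cT_{h,v}$; here $\cA_v$ denotes the event that a wired separator of $v$ from $\partial\cT_{h,v}$ exists in $\cT_{h,v}\setminus\{v\}$. Bounding $\widetilde\pi(\cS_{h,\xi}^c)\le\sum_v\widetilde\pi(\cA_v^c\mid\xi)$ and noting $|\partial\cT_{h/2}|\le\Delta^{h/2}$, the lemma reduces to exhibiting constants $c_1,c_2=c_1,c_2(p,q,r)$ going to $\infty$ as $p,r\to 1$ such that for each $v$, $\widetilde\pi(\cA_v^c\mid\xi)\le e^{-c_1 h}$ with $\mathsf P$-probability at least $1-e^{-c_2(1.1)^h}$.

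Next I would recast $\cA_v^c$ as a tree-percolation event: call $w\in\cT_{h,v}$ \emph{red} if $w$ is not connected through $\widetilde\omega(\cT_{h,w})$ to any leaf in $\xi\cap\partial\cT_{h,w}$; then $\cA_v^c$ coincides with the existence of a root-to-leaf tree-path in $\cT_{h,v}$ of red vertices. Writing $\rho_w(\xi):=\widetilde\pi(w\text{ red}\mid\xi)$ and $\sigma_w(\xi):=(1-\tilde p)+\tilde p\rho_w(\xi)$, subtree independence gives the recursion $\rho_w=\prod_{w_i}\sigma_{w_i}$ (with base case $\rho_w=\one\{w\notin\xi\}$ at leaves), and a union bound over the path endpoint $\ell$ yields
\begin{equation*}
\widetilde\pi(\cA_v^c\mid\xi)\ \le\ \sum_{\ell\in\partial\cT_{h,v}\setminus\xi}\ \prod_{w\in\mathrm{OP}(\ell)}\sigma_w(\xi),
\end{equation*}
where $\mathrm{OP}(\ell)$ denotes the off-path children of the ancestors of $\ell$ along the tree-path $v\to\ell$, whose subtrees are pairwise disjoint so that their $\sigma_w$ are independent functions of $\xi$.

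The main step is the doubly-exponential $\mathsf P$-concentration of the shallow $\sigma_w$'s near the small fixed point $\alpha^*=(1-\tilde p)+\tilde p\, p^*$ of the expectation-recursion $p_{k+1}=[(1-\tilde p)+\tilde p\,p_k]^d$ initialized at $p_0=1-r$. The mechanism is a propagation argument for ``near-$1$'' values: if $\sigma_u\ge 1-\delta$, the identity $\sigma_u=(1-\tilde p)+\tilde p\prod_i\sigma_{u_i}$ and $\sigma_{u_i}\le 1$ force $\prod_i\sigma_{u_i}\ge 1-\delta/\tilde p$ and, since every internal vertex has at least two children, every child $u_i$ to satisfy $\sigma_{u_i}\ge 1-\delta/\tilde p$. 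Iterating down $k$ levels, \emph{every one} of the $\ge 2^k$ descendants at depth $k$ below $u$ must be near-$1$ --- an event which at the leaves reduces to all these descendants being unwired, bounded by $(1-r)^{2^k}$. Taking $k\asymp h/4$, this is $\le e^{-c_2(1.1)^h}$ using $2^{1/4}>1.1$; a union bound over $u$'s at depth $\le h/4$ from $v$ is absorbed into the doubly-exponential slack. A multi-scale refinement of this argument, carried out with a sequence of contracting thresholds $1-\delta_0,\,1-\delta_1,\ldots$ converging toward $\alpha^*$, then upgrades the weak uniform conclusion ``$\sigma_w$ stays away from $1$'' to the sharper uniform bound $\sigma_w\le C\alpha^*$ for all $w$ at depth $\le h/4$ from $v$.

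Under this typical event, each path's product contains $\Omega(h)$ shallow off-path factors bounded by $C\alpha^*$ and up to $\Omega(h)$ deep factors bounded trivially by $1$, giving $\prod_{w\in\mathrm{OP}(\ell)}\sigma_w\le (C\alpha^*)^{\Omega(h)}$. Summing over the at most $|\partial\cT_{h,v}|\le\Delta^{h/2}$ unwired leaves yields $\widetilde\pi(\cA_v^c\mid\xi)\le\Delta^{h/2}(C\alpha^*)^{\Omega(h)}\le e^{-c_1 h}$, provided $\tilde p$ is close enough to $1$ that $\Delta\cdot(C\alpha^*)^{\Omega(1)}<1$; a final union bound over $v$ closes the argument. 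The principal obstacle is the multi-scale refinement in the previous paragraph: the basic one-scale propagation only yields ``$\sigma_w$ bounded away from $1$'', whereas beating the exponential volume $\Delta^{h/2}$ in the path sum requires the sharper bound $\sigma_w\lesssim\alpha^*$, and threading the contraction through sufficiently many threshold scales while keeping each refinement's $\mathsf P$-failure event within the doubly-exponential budget is the most delicate point.
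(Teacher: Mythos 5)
Your approach is a genuinely different route from the paper's. The paper fixes a downward path $v_0,\dots,v_{h/2}$ and an index $i$, uses the branching-process estimate (Fact~\ref{fact:branching-process}) to show that the $\widetilde\omega$-component of $v_i$ in the slab $\cT_{v_i}\setminus\cT_{v_{i+1}}$ reaches at least $(2\tilde p-\epsilon)^{h(v_i)-1}$ boundary vertices with $\widetilde\pi$-probability $1-\delta$, and then applies Markov's inequality to the $\mathsf P_{\mathrm{Ber}(r)}\otimes\widetilde\pi$-probability $(1-r)^{(2\tilde p-\epsilon)^{h(v_i)-1}}$ that this component misses $\xi$. This yields the doubly-exponential $\mathsf P$-concentration in one step, with no recursion bookkeeping. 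You instead introduce the deterministic recursion $\rho_w=\prod_i\sigma_{w_i}$, $\sigma_w=(1-\tilde p)+\tilde p\rho_w$, and try to show directly --- by a one-scale ``near-$1$ propagation'' followed by a multi-scale contraction toward the small fixed point $\alpha^*$ --- that the shallow $\sigma_w$ are small with doubly-exponential $\mathsf P$-probability. The path-sum bound $\widetilde\pi(\cA_v^c\mid\xi)\le\sum_{\ell\notin\xi}\prod_{w\in\mathrm{OP}(\ell)}\sigma_w(\xi)$ and the recursion itself are correct.

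The gap is in the one-scale step, which you mischaracterize. When you propagate ``$\sigma_u\ge 1-\delta\Rightarrow\sigma_{u_i}\ge 1-\delta/\tilde p$'' all the way to the leaves of $\cT_{h,u}$ (depth $m=m(u)\ge h/4$), the threshold degrades to $1-\delta/\tilde p^m$, and forcing ``all leaves unwired'' requires $\delta<\tilde p^{m+1}$. So the one-scale argument yields only the \emph{depth-dependent} bound $\mathsf P\big(\sigma_u\ge 1-\tilde p^{m(u)+1}\big)\le(1-r)^{2^{m(u)}}$, whose threshold $1-\tilde p^{m+1}\to 1$ as $m$ grows --- it does \emph{not} give the uniform ``$\sigma_w$ bounded away from $1$'' conclusion you attribute to it. Consequently the multi-scale refinement must carry the full weight of bridging from $1-\tilde p^{m(w)+1}$ all the way to $C\alpha^*$, not from a fixed $1-\delta_0$. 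That bridge does exist: once the depth-dependent bound holds at every vertex of height $\ge h/4$, iterating the deterministic recursion $\sigma\mapsto(1-\tilde p)+\tilde p\sigma^{d}$ upward expands the gap $1-\sigma$ by a factor $\gtrsim 2\tilde p>1$ per level at no additional $\mathsf P$-cost, reaching $O(\alpha^*)$ after $\sim h(1-\tilde p)/(4\log 2)\ll h/4$ levels and leaving $\Omega(h)$ shallow off-path factors of size $O(\alpha^*)$. But this calculation is exactly the content your proof is missing, and your description of the refinement (``a sequence of contracting thresholds,'' ``each refinement's $\mathsf P$-failure event within the doubly-exponential budget'') both misidentifies the starting point of the contraction and suggests the refinement incurs further probabilistic cost when in fact it is purely deterministic once the one-scale event holds.
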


\begin{proof}
    For some $\gamma = \gamma(p,q,r)>0$ to be chosen later,  going to $1$ as $p,r \to 1$, consider the event $\mathcal E_{\gamma}$ 
    that $\xi$ belongs to the set of boundary conditions on $\cT_h$ that satisfy the following property:

    \begin{enumerate}
        \item[(P1)]  for each downward path from $\partial \cT_{h/2}$ to $\partial \cT_h$: $v_0 ,v_1,...,v_{h/2}$,
    for each $i=1,...,h/4$, if we draw a configuration $\widetilde \omega (\cT_{v_i}\setminus \cT_{v_{i+1}})$ from $\widetilde \pi_{\cT_{v_i}\setminus \cT_{v_{i+1}}}$, the probability that the component of $v_i$ in $\widetilde \omega (\cT_{v_i}\setminus \cT_{v_{i+1}})$ intersects the wired component of $\xi\cap \partial (\cT_{v_i}\setminus \cT_{v_{i+1}})$ is at least $\gamma$.
    \end{enumerate}
    
    We first show that $\xi \in \mathcal E_{\gamma}$ implies $\xi$ is $c$-good for a suitable $c > 0$. 
    Fix any $\xi\in \mathcal E_{\gamma}$. In order for $\cS_{h,\xi}^c$ to occur, there must exist a path from $\partial \cT_{h/2}$ to $\partial \cT_h$ such that no $v_i$ is connected to $\xi$ through $\widetilde \omega(\cT_{v_i})$. For any fixed path, that probability is upper bounded by 
    \begin{align*}
        \widetilde\pi_{\cT_h}\Big(\bigcap_{i=1}^{h/4}\{\widetilde \omega(\cT_{v_i} \setminus \cT_{v_{i+1}}) \cap \xi = \emptyset\} \Big) = \prod_{i=1}^{h/4} \widetilde\pi_{\cT_h}(\widetilde \omega(\cT_{v_i} \setminus \cT_{v_{i+1}}) \cap \xi = \emptyset) \le (1-\gamma)^{h/4}\,.
    \end{align*}
    Here we have abused notation slightly to identify $\xi$ with the subset of $\partial \cT_h$ that is its wired component. 
    (The change from the intersection to the product comes from the fact that $\widetilde\pi_{\cT_h}$ is a product measure and that 
    the sets $\cT_{v_i}\setminus \cT_{v_{i+1}}$ are disjoint for different $i$.)
     A union bound over the $\Delta^h$ many paths implies that the probability that $\omega\notin \cS_{h,\xi}$ is at most $\Delta^{h/2} (1-\gamma)^{h/4}$ which is at most $\exp(-c h)$ for some $c > 0$  going to $\infty$ as $\gamma\to 1$, which happens as $p,r\to 1$. 

     It now suffices to find such a $\gamma > 0$, and a $\hat c  = \hat c(p,q,r)> 0$ such that  
    \begin{align}\label{nts:xi-not-c-good}
    \mathsf{P}_{\mathrm{Ber}(r)} \big(\xi \notin \mathcal E_{\gamma}\big) \le \exp( - \hat c (1.1)^{h})\,.
    \end{align}
    Fix a downward path $v_0,...,v_{h/2}$ from $\partial \cT_{h/2}$ to $\partial \cT_h$ and an $i\in \{1,...,h/4\}$ (these will subsequently be union bounded over), and consider the probability that $\xi$ is such that (P1) above holds for that path and that $i$. 

    Define the event $\mathcal F^i$ that 
    the connected component of $v_i$ in
    $\widetilde \omega(\cT_{v_i}\setminus \cT_{v_i+1})$ intersects $\partial (\cT_{v_i}\setminus \cT_{v_i+1})$ in at least $(2\tilde p - \epsilon)^{h(v_i)-1}$ many sites, where $h(v_i)$ denotes the height of $\cT_{v_i}$. Note that $h(v_i) = i-h/2 \ge h/4$, and $\epsilon$ will be chosen later. For intuition, the component of $v_i$ in $\widetilde \omega(\cT_{v_i}\setminus \cT_{v_i +1})$ is a branching process which (besides its first level) has $2\tilde p$ expected number of children since we assumed that in $\cT_{h}$ all internal vertices have degree at least $3$.
    By a standard branching process argument (see Fact~\ref{fact:branching-process} below from which this follows after asking that the first level have an open edge with probability at least $\tilde p$), we know there exists $\delta$ going to $0$ as $\tilde p\to 1$ (equivalently as $p\to 1$) such that 
    \begin{align*}
        \widetilde\pi_{\cT_h}(\widetilde \omega(\cT_{v_i}\setminus \cT_{v_i+1}) \notin \mathcal F^i) 
        \le \delta\,.
    \end{align*}
       
    By the independence of $\widetilde \omega(\cT_{v_i}\setminus \cT_{v_i+1})$ from $\xi$, we have 
    \begin{align*}
        \mathsf{P}_{\text{Ber}(r)} \otimes \widetilde\pi_{\cT_{h}}\Big( \widetilde \omega(\cT_{v_i}\setminus \cT_{v_i+1}) \cap \xi = \emptyset,~ \widetilde \omega(\cT_{v_i}\setminus \cT_{v_i+1}) \in \mathcal F^i\Big) 
         & \le (1-r)^{(2 \tilde p - \epsilon)^{h(v_i)-1}}\,.
    \end{align*}
    The left-hand side above is exactly the expected value over $\xi \sim \mathsf{P}_{\text{Ber}(r)}$ of the $\widetilde \pi_{\cT_h}$ probability of an event depending on $\xi$.
    Thus by Markov's inequality and the fact that $h(v_i)\ge h/4$, 
    \begin{align*}
        \mathsf P_{\mathrm{Ber}(r)}\Big(\xi: \widetilde\pi_{\cT_{h}}\big( \widetilde \omega(\cT_{v_i}\setminus \cT_{v_i+1}) \cap \xi & = \emptyset,~ \widetilde \omega(\cT_{v_i}\setminus \cT_{v_i+1}) \in \mathcal F^i\big) >(1-r)^{(2 \tilde p - \epsilon)^{h/4}/2}\Big) \\
       & \le  \frac{ \mathsf{E}_{\xi\sim \text{Ber}(r)} \Big[\widetilde\pi_{\cT_{h}}\Big( \widetilde \omega(\cT_{v_i}\setminus \cT_{v_i+1}) \cap \xi = \emptyset,~ \widetilde \omega(\cT_{v_i}\setminus \cT_{v_i+1}) \in \mathcal F^i\Big)\Big] }{(1-r)^{(2 \tilde p - \epsilon)^{h/4}/2}}	\\
              & =  \frac{ \mathsf{P}_{ \text{Ber}(r)} \otimes \widetilde\pi_{\cT_{h}}\Big( \widetilde \omega(\cT_{v_i}\setminus \cT_{v_i+1}) \cap \xi = \emptyset,~ \widetilde \omega(\cT_{v_i}\setminus \cT_{v_i+1}) \in \mathcal F^i\Big)}{(1-r)^{(2 \tilde p - \epsilon)^{h/4}/2}}	\\
        &\le \exp( (2 \tilde p - \epsilon)^{h/4}\log(1-r)/2)\,.
    \end{align*}
    At the same time, for any fixed $\xi$, 
    \begin{align*}
        \widetilde\pi_{\cT_{h}}(\widetilde \omega(\cT_{v_i}\setminus \cT_{v_i+1}) \cap \xi = \emptyset) & \le \widetilde\pi_{\cT_{h}}(\widetilde \omega(\cT_{v_i}\setminus \cT_{v_i+1}) \cap \xi = \emptyset, \widetilde \omega(\cT_{v_i}\setminus \cT_{v_i+1}) \in \mathcal F^i) \\
        & \quad + \widetilde\pi_{\cT_{h}}(\widetilde \omega(\cT_{v_i}\setminus \cT_{v_i+1}) \notin \mathcal F^i)\,.
    \end{align*}
    If the first of these terms is at most $(1-r)^{(2 \tilde p - \epsilon_p)^{h/4}/2}$ and the second is at most $\delta$, we set $\gamma =  1-\delta - (1-r)^{(2 \tilde p - \epsilon)^{h/4}/2}$.  Then, $\gamma$ goes to $1$ as $p\to 1$ and by the above, $ \widetilde\pi_{\cT_{h}}(\widetilde \omega(\cT_{v_i}\setminus \cT_{v_i+1}) \cap \xi \ne \emptyset)\ge \gamma$. Thus, 
    \begin{align*}
        \mathsf P_{\mathrm{Ber}(r)}\big(\xi: \widetilde\pi_{\cT_h}(\widetilde \omega(\cT_{v_i}\setminus \cT_{v_i+1}) \cap\xi= \emptyset)> 1- \gamma\big) \le \exp( (2 \tilde p - \epsilon)^{h/4}\log(1-r)/2)\,.
    \end{align*}
    We now upgrade this into the probability that $\xi$ is in $\mathcal E_\gamma$ by a union bound over all $\Delta^{h/2}$ many paths in $\cT_h$ and the $h/4$ many possible $i$'s; both these terms are absorbed by the double exponential above. As long as $\epsilon$ is sufficiently small, and $p$ is sufficiently large, $(2\tilde p-\epsilon)^{1/4}$ is greater than $1.1$, and the prefactor evidently blows up as $r\to 1$ as claimed. 
\end{proof}

For completeness, we have included the following simple branching process concentration estimate. 

\begin{fact}\label{fact:branching-process}
    In a branching process with progeny distribution stochastically dominating $\text{Bin}(2,\tilde p)$, let $Z_k$ be the population size at level $k$. Then for every $\epsilon>0$, there exists $\delta>0$ going to $0$ as $p\to 1$ such that 
    \begin{align*}
        \mathbb P\Big( \bigcap_{k\ge 1} \{Z_k \ge (2\tilde p -\epsilon)^k\}\Big) \ge 1-\delta\,.
    \end{align*}
\end{fact}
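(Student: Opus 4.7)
The plan is to reduce to the canonical case where each individual's progeny is exactly $\mathrm{Bin}(2,\tilde p)$, via a standard monotone coupling of branching processes (larger offspring gives stochastically larger $Z_k$), and then prove the lower bound for this sharper distribution by splitting the argument into an \emph{early} and a \emph{late} generation regime.

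For the early regime, I fix a threshold $K=K(\tilde p)\to\infty$ to be chosen at the end. By a union bound over the at most $2^{K+1}$ offspring draws in the first $K$ generations, with probability at least $1-2^{K+2}(1-\tilde p)$ every individual in generations $0,\ldots,K-1$ produces exactly two children, so on this event $Z_k = 2^k$ for all $k\le K$, which trivially dominates $(2\tilde p-\epsilon)^k$. This step leverages $\tilde p \to 1$ together with the fact that the total number of offspring trials to control is still only exponential in $K$.

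For the late regime, given $Z_{k-1}=m$, the next generation $Z_k$ has distribution $\mathrm{Bin}(2m,\tilde p)$. A Bernstein--Chernoff bound (using $\tilde p(1-\tilde p)\le 1/4$) furnishes a constant $c(\epsilon)>0$ independent of $\tilde p\in[\tfrac12,1]$ such that
\[
\Pr\!\big(Z_k < (2\tilde p-\epsilon/2)\,Z_{k-1}\,\big|\,Z_{k-1}\big) \;\le\; \exp\!\big(-c(\epsilon)\,Z_{k-1}\big).
\]
Iterating this bound from $Z_K=2^K$, on the intersection of all the good events at steps $k>K$ one obtains $Z_k \ge 2^K(2\tilde p-\epsilon/2)^{k-K}$ for every $k\ge K$. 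Since $\tilde p<1$ and $\epsilon>0$, both $2/(2\tilde p-\epsilon)\ge 1$ and $(2\tilde p-\epsilon/2)/(2\tilde p-\epsilon)\ge 1$, so this already dominates the target $(2\tilde p-\epsilon)^k$.

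Finally, I would bound the overall failure probability by
\[
\delta \;\le\; 2^{K+2}(1-\tilde p) \;+\; \sum_{k>K}\exp\!\big(-c(\epsilon)\,2^K(2\tilde p-\epsilon/2)^{k-K-1}\big).
\]
Because $2\tilde p-\epsilon/2>1$ once $\tilde p$ is sufficiently close to $1$, the summands above decrease doubly exponentially, so the tail is dominated by $C_\epsilon\exp(-c(\epsilon)2^K)$. Choosing, e.g., $K=\lfloor\log_2(1/\sqrt{1-\tilde p})\rfloor$ makes both contributions vanish as $\tilde p\to 1$, giving $\delta=O(\sqrt{1-\tilde p})$. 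The only delicate point is keeping the Bernstein constant $c(\epsilon)$ bounded away from $0$ uniformly in $\tilde p$ (so that the double-exponential cancellation survives the sum over $k$); this is automatic since we only ask for a deviation of $\epsilon Z_{k-1}/2$, which is linear in $Z_{k-1}$ while the variance of $\mathrm{Bin}(2Z_{k-1},\tilde p)$ is at most $2Z_{k-1}$.
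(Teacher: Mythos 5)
Your proposal is correct and follows essentially the same route as the paper's proof: reduce by monotonicity to exact $\mathrm{Bin}(2,\tilde p)$ progeny, handle the first $K(\tilde p)\approx \log_2(1/\sqrt{1-\tilde p})$ generations by a union bound forcing every early individual to have two children, and handle later generations by a Hoeffding/Chernoff bound on $Z_k$ conditional on $Z_{k-1}$, summing the resulting doubly-exponentially small failure probabilities. The only cosmetic differences are your use of the intermediate slack $\epsilon/2$ and a slightly different choice of $K$; both yield $\delta\to 0$ as $\tilde p\to 1$.
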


\begin{proof}
    Since the event in question is an increasing event, it suffices to show the above for the branching process with progeny distribution exactly $\text{Bin}(2,\tilde p)$. If we let $\mathcal A_k$ be the event  $\{Z_k \ge (2\tilde p - \epsilon)^k\}$, then we can write 
    \begin{align*}
        \mathbb P \Big(\bigcup_{k\ge 1} \mathcal A_k^c\Big)  \le \mathbb P(\mathcal A_1^c) +  \sum_{k\ge 2} \mathbb P\Big(\mathcal A_k^c , \bigcap_{j<k}\mathcal A_j\Big) \le \mathbb P(\mathcal A_1^c) +  \sum_{k\ge 2} \mathbb P\Big(\mathcal A_k^c \mid \bigcap_{j<k}\mathcal A_j\Big)\,.
    \end{align*}
    Since $Z_1 = 1$, the probability of $\mathcal A_1$ is $1$. For any $k\ge 2$, since $Z_k$ is Markov, it suffices to condition on $Z_{k-1}: Z_{k-1}\in \mathcal A_{k-1}$; given $Z_{k-1}$, the distribution of $\mathcal A_{k-1}$ is $$\sum_{i=1}^{Z_{k-1}} X_i \qquad \text{where $X_i$ are i.i.d.\ $\text{Bin}(2,\tilde p)$}\,. $$

    Thus, $\mathbb P(\mathcal A_k^c \mid Z_{k-1},\mathcal A_{k-1})$ is at most the  probability of a sum of $Z_{k-1} \ge (2\tilde p-\epsilon)^{k-1}$ i.i.d.\ $\text{Bin}(2,\tilde p)$ random variables, being at least $\epsilon Z_{k-1}$ below its mean. By Hoeffding's inequality (the $X_i$'s being bounded by $2$), this has probability at most $\exp( - \tfrac{1}{2}\epsilon^2 (2\tilde p -\epsilon)^{k-1})$ for every $k$. Let $K$ equal $\log_{2\tilde p-\epsilon}(2/(\epsilon^2\sqrt{1-\tilde p}))$. For the first $K$ generations, we can use the simpler union bound over the probability that one of the first $(2\tilde p-\epsilon)^k$ many $X_i$'s is not equal to $2$. 
    Putting these together, we get  
    \begin{align*}
        \mathbb P\Big(\bigcup_{k\ge 1}\mathcal A_k^c\Big) \le \frac{4}{\epsilon^2} (1-\tilde p)^{1/2} + \sum_{k>K} e^{ - \frac{1}{\sqrt{1-\tilde p}}(k-K)}\,.
    \end{align*}
    For any fixed $\epsilon$, this is then seen to be at most some $\delta$ going to zero as $\tilde p \to 1$, which happens as $p\to 1$. 
\end{proof}

\subsection{Mixing time for trees with single-component boundary conditions}\label{sec:single-component-tree-mixing-time}
In this section, we provide a bound on the mixing time of the FK dynamics on trees with single-component boundary conditions. 
In view of its application in the following section for treelike balls of expanders (possibly having a finite number of cycles), it will be helpful for us to recall 
certain standard definitions that will allow us to relate the convergence rate of various Markov chains.

For a Markov chain on a finite state space $\Omega$ with transition matrix $P$ and stationary distribution $\mu$, the Dirichlet form of the chain is defined for any function $f:\Omega \to \mathbb R$ by
\begin{align}\label{eq:Dirichlet-form}
    \mathcal E(f,f) := \frac{1}{2}\sum_{\omega,\omega'\in \Omega} \mu(\omega) P (\omega, \omega') (f(\omega) - f(\omega'))^2\,,
\end{align}
and its {spectral gap}  is given by 
\begin{align}\label{eq:lsi-constant}
    \lambda_P := \min_{f: \text{Var}_\mu[f]\ne 0} \frac{\mathcal E(f,f)}{\mbox{Var}_\mu[f]}\,, 
\end{align}
where $\text{Var}_{\mu} [f] = \mathbb E_{\mu}[f^2] - \mathbb E_\mu[f]^2$ with $\mathbb E_{\mu}[f] = \sum_{\omega \in \Omega} \mu(\omega)f(\omega)$.

The inverse of the spectral gap is closely connected to the mixing time of a Markov chain. In particular,  
\begin{align}\label{eq:tmix-trel-comparison}
	(\lambda_P^{-1}-1) \log\big(\tfrac{1}{2\epsilon}\big) \le \tmix(\epsilon) \le \lambda_P^{-1} \log\big(\tfrac{1}{\epsilon \mu_{\min}}\big)\,,
\end{align}
where $\mu_{\min} = \min_{\omega \in \Omega} \mu(\omega)$. 
(We refer the reader to e.g.,~\cite[Chapter 12.2]{LP} for more details.) 

We establish the following bounds for the mixing time and inverse spectral gap of the FK dynamics on a tree with any single-component boundary condition.

\begin{lemma}\label{lem:wired-tree-spectral-gap}
	Consider any tree $\cT_h$ of maximum degree $\Delta$ and depth $h$ with single-component boundary condition $\xi$. There exist constants $a = a(\Delta,q) > 0$
    and $C = C(p,q) > 0$
    such that the inverse spectral gap of the FK dynamics on $\cT_h$ with boundary condition $\xi$ are at most $C \exp(a h)$.

\end{lemma}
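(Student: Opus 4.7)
The strategy is induction on the depth $h$, using a variance decomposition along the tree structure. Let $\lambda(h)$ denote the infimum of the spectral gap of the FK dynamics over all depth-$h$ trees $\cT_h$ of maximum degree $\Delta$ with minimum internal degree $3$ and any single-component boundary condition $\xi$. The base case $h \le h_0(\Delta,q)$ is handled by a trivial bound: the state space is of constant size and $\pi_{\min}$ is bounded below by a constant depending only on $p,q,\Delta$, so a crude canonical-paths / Poincar\'e argument yields $\lambda(h) \ge c_0(p,q)$.

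For the inductive step, let the root $\rho$ have children $v_1,\ldots,v_k$ with $k\le \Delta$, and let $\cT_h^{(i)}$ be the depth-$(h-1)$ subtree rooted at $v_i$. Let $\mathcal F_0$ be the $\sigma$-algebra generated by the root-adjacent edges $\{\rho,v_i\}$. I will decompose, for any $f : \Omega \to \mathbb R$,
\begin{align*}
\var_\pi(f) \;=\; \var(\E[f \mid \mathcal F_0]) \;+\; \E[\var(f \mid \mathcal F_0)].
\end{align*}
The crucial structural observation is: by the domain Markov property on trees, conditional on $\mathcal F_0$ the subtree configurations $\omega(E(\cT_h^{(i)}))$ are mutually independent; and since $\xi$ is single-component, the boundary condition induced on each $\cT_h^{(i)}$ (which records $\xi\cap \partial \cT_h^{(i)}$ together with possibly wiring $v_i$ into the wired class of $\xi$ through $\rho$ and the other subtrees) remains single-component. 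This lets us apply the inductive hypothesis to each subtree.

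Combining conditional independence with the Efron--Stein inequality and the inductive hypothesis yields $\E[\var(f\mid \mathcal F_0)] \le \lambda(h-1)^{-1}\,\mathcal E^{\mathrm{non\text{-}root}}(f,f)$, where $\mathcal E^{\mathrm{non\text{-}root}}$ collects single-edge Dirichlet contributions from non-root edges. For the outer term, $\E[f\mid \mathcal F_0]$ is a function of $k\le \Delta$ binary variables under a marginal with minimum probability bounded below by some $c(p,q,\Delta)>0$. A Poincar\'e inequality on this bounded-size law, combined with a comparison showing that flipping a single root edge affects the conditional subtree measures only through the wiring of $v_i$ (a bounded-state move absorbable into single-edge Dirichlet contributions), gives
\begin{align*}
\var(\E[f \mid \mathcal F_0]) \;\le\; C_1(p,q,\Delta)\, \mathcal E^{\mathrm{root}}(f,f) \;+\; C_2(p,q,\Delta)\, \E[\var(f\mid \mathcal F_0)].
\end{align*}
Assembling these bounds produces the recursion $\lambda(h)^{-1} \le (1+C_2)\lambda(h-1)^{-1} + C_1$, which iterates to $\lambda(h)^{-1} \le C\,(1+C_2)^h$, establishing the claimed bound with $a = \log(1+C_2(\Delta,q))$ and $C = C(p,q)$.

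The main obstacle is the outer-variance comparison: naively bounding $\var(\E[f\mid \mathcal F_0])$ via Jensen by $\E[\var(f\mid \mathcal F_0^{-i})]$ points in the wrong direction, because $\E[\var(f\mid \mathcal F_0^{-i})]$ is larger than the single-edge Dirichlet contribution $\mathcal E_{e_i}(f,f)$ (which conditions on \emph{all} other edges, not just the other root edges). Resolving this requires a delicate coupling between the conditional measures $\pi(\cdot \mid \mathcal F_0 = y)$ and $\pi(\cdot \mid \mathcal F_0 = y^i)$ that differ by a single root-edge flip, or equivalently a more refined variance decomposition isolating the ``root-edge flip'' step from the ``subtree resampling'' step; the monotonicity and FKG positive correlations of the random-cluster model with $q \ge 1$ should be essential here.
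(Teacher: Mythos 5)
Your proposal takes a genuinely different route from the paper, which proves this lemma via canonical paths through a cut-width bound: since the edge-cut-width of a bounded-degree depth-$h$ tree is $O(h)$, the standard canonical-paths estimate yields $\lambda^{-1}\le C|E(\cT_h)|^2 q^{O(h)}$ directly, and the single-component boundary condition contributes only an additional $q^{O(1)}$ factor to the congestion ratio. That argument is self-contained, requires no induction, and avoids the structure of the subtree conditional laws entirely.

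Your inductive variance decomposition, by contrast, has a genuine gap at the very step you call the ``crucial structural observation.'' It is \emph{not} true that, conditionally on $\mathcal F_0$ (the root-adjacent edges alone), the subtree configurations $\omega(E(\cT_h^{(i)}))$ are mutually independent. The domain Markov property says the conditional law of $\omega$ on $\bigcup_i E(\cT_h^{(i)})$ is a random-cluster measure with boundary conditions induced by $\xi$ together with the revealed root edges; but if $\omega(\{\rho,v_i\})=\omega(\{\rho,v_j\})=1$, that boundary condition wires $v_i$ and $v_j$ together, and if $\xi$'s wired component spans the boundaries of several subtrees, that also wires them together. A random-cluster model ($q\ne 1$) with a boundary condition that wires across the subtrees does \emph{not} factorize. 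You in fact betray the issue yourself when you write that the boundary condition on subtree $i$ records ``wiring $v_i$ into the wired class of $\xi$ through $\rho$ and the other subtrees'': that event is a function of the other subtrees' configurations, which are precisely what you are trying to treat as independent. This is a harder obstacle than the outer-variance comparison you flag at the end, and it defeats the tensorization needed for the inner term $\E[\var(f\mid\mathcal F_0)]$. The cut-width/canonical-paths route sidesteps both problems, which is presumably why the paper uses it.

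A secondary, more minor point: your outer-variance step is stated as unresolved (``resolving this requires a delicate coupling\ldots FKG should be essential''), so even granting the conditional-independence claim the proposal is not a complete proof. And for the record, the paper's proof does not use the minimum-internal-degree-3 hypothesis that you carry along; it is needed elsewhere in Section~5 but not for this spectral-gap bound.
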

\begin{proof}   
    We use the classical bound on the spectral gap obtained from the cut-width of a graph 
    via the canonical paths method, though a little care is needed for the purpose of handling the random-cluster boundary condition. The edge-cut-width of $\cT_h$ (in other words, the cut-width of its line graph) is defined as follows: enumerate the edges of $\cT_h$ as $1,...,|E(\cT_h)|$ and define 
    \begin{align*}
        \mathsf{CW}(\cT_h) = \min_{\sigma}  \max_{i} |V(\{e_{\sigma(j)}:j\le i\}) \cap V(\{e_{\sigma(j)}:j>i\})|
    \end{align*}
    where the minimum is over permutations $\sigma$ on $\{1,...,|E(\cT_h)|\}$. 
    We claim that there exists a constant $K(\Delta)$ such that uniformly over all trees of degree at most $\Delta$, their edge-cut-width is at most $K h$;
    this follows e.g., from \cite[Lemma 2.1]{kenyon2001glauber-ptrf} and the fact that the edge-cut-width is within a factor of $\Delta$ of the (vertex) cut-width. Let $\sigma$ be the permutation that attains this edge-cut-width bound for $\cT_h$. 

    For any two random-cluster configurations $I,F\in \Omega$ on $\cT_h$, define the canonical path $\gamma_{I,F}$ as the path of FK dynamics transitions which sequentially processes the edges of $E(\cT_h)$ according to the ordering induced by $\sigma$, i.e., $e_{\sigma(1)},e_{\sigma(2)},...,e_{\sigma(|E(\cT_h)|}$, and whenever there is a discrepancy $I(e_{\sigma(i)})\ne F(e_{\sigma(i)})$, the transition is the one that changes the state of $e_{\sigma(i)}$ from $I(e_{\sigma(i)})$ to $F(e_{\sigma(i)})$. 

    For an FK dynamics transition $(\eta,\eta^i)$ where $\eta^i := \eta \oplus e_{\sigma(i)}$, construct a bijection from the set of $\{I,F:(\eta,\eta^i)\in \gamma_{I,F}\}$ to $\Omega$ by setting $$\omega_\eta(I,F) = \{I(e_{\sigma(j)}):j\le i\} \cup \{F(e_{\sigma(j)}): j>i\}\,.$$ This is a bijection because $I$ is recovered via $I=\{\omega_\eta(e_{\sigma(j)}):j\le i\}\cup \{\eta(e_{\sigma(j)}):j>i\}$ and $F$ is analogously recovered via $F=\{\eta(e_{\sigma(j)}):j\le i\}\cup \{\omega_\eta(e_{\sigma(j)}):j>i\}$. 

    For ease of notation, let $\pi = \pi_{\cT_h}^\xi$.
    The standard canonical paths bound (see~\cite[Corollary 13.20]{LP})  then ensures the inverse gap satisfies  
    \begin{align*}
        \lambda_P^{-1} &\le \max_i \max_\eta \frac{1}{\pi(\eta)P(\eta,\eta^i)}  \sum_{I,F: (\eta,\eta^i)\in \gamma_{I,F}} \pi(I)\pi(F)|\gamma_{I,F}|  \\ 
        & \le |E(\cT_h)| \max_{\eta,i} \max_{I,F: (\eta,\eta^i)\in \gamma_{I,F}} \frac{1}{P(\eta,\eta^i)} \frac{\pi(I)\pi(F)}{\pi(\eta)\pi(\omega_\eta(I,F))}\,.
    \end{align*}
    The probability $P(\eta,\eta^i)$ is at least the probability of picking the edge $e_{\sigma(i)}$ to update ($1/|E(\cT_h)|$) times the minimal probability of flipping an edge, which is some $C = C(p,q) > 0$. The ratio of probabilities is bounded by noticing that the number of edges present in the multisets $\{I,F\}$ and $\{\eta,\omega_\eta(I,F)\}$ are the same, leaving only the factor of $q$ to contribute. Without the boundary conditions on $\cT_h$, we claim that 
    \begin{align}\label{eq:component-change}
        |k(I) + k(F) - k(\eta) - k(\omega_\eta(I,F))| \le 2 |V(e_{\sigma(j):j\le i}) \cap V(e_{\sigma(j):j>i})|\,,
    \end{align}
    where we recall that $k(\omega)$ is the number of connected components in the subgraph $(V(G),\omega)$~\eqref{eq:fk-definition}. 
    To see~\eqref{eq:component-change}, note that the only component counts that can differ between $k(I)+k(F)$ and $k(\eta)+k(\omega_\eta(I,F))$ are from components that intersect the vertex boundary between $\{e_{\sigma(j)}:j\le i\}$ and $\{e_{\sigma(j)}:j>i\}$. 
    
    The addition of the boundary conditions can only change the bound on the left-hand side of~\eqref{eq:component-change} additively by at most $2$. This is because up to a change of the number of components by at most $1$, we can split the boundary condition $\xi$ into two parts, one being its part that intersects vertices of $\{e_{\sigma(j)}:j\le i\}$ and one being its part that intersects vertices of $\{e_{\sigma(j)}:j>i\}$. With this modification, the same reasoning as in the no boundary condition case holds, that the component counts only differ through components that hit the vertex boundary, and this number of components is evidently bounded above by the size of the vertex boundary. 
     Altogether, we get that the spectral gap satisfies 
    \begin{align*}
        \lambda_P^{-1} \le {C|E(\cT_h)|^2} \exp\big(2(\mathsf{CW}(\cT_h) + 1)\log q\big) \le {C} \Delta^{h} e^{ 2Kh\log q} \,,
    \end{align*}
    which implies the claimed bound up to a change of constants. 
\end{proof}

\section{Mixing time on locally treelike graphs}\label{sec:treelike-expanders}
We now use the understanding from Section~\ref{sec:trees} on the random-cluster model on trees with $r$-wired boundary conditions to control their mixing time on treelike graphs having exponentially strong supercritical phases for the edge-percolation: namely, to prove Theorem~\ref{thm:exp-growth-mixing}.  As mentioned in the introduction, the prototypical example to have in mind in this section is a treelike expander like the random $\Delta$-regular graph. 

The proof strategy of Theorem~\ref{thm:exp-growth-mixing} consists of the following three steps, which will organize the section: 
\begin{enumerate}
\item An $O(1)$ burn-in period to obtain (nearly) $r$-wired boundary conditions on the locally treelike $O(\log n)$-radius balls of the graph; 
\item Censoring of the dynamics after burn-in to localize to a treelike ball (and an estimate on how long the mixing time will be on the local ball with (nearly) $r$-wired boundary conditions);
\item A spatial mixing property with (nearly) $r$-wired boundary conditions to couple the two censored copies after they have each respectively reached equilibrium. 
\end{enumerate}

\subsection{Burn-in to induce $(r,L)$-wired boundary conditions on treelike balls}
In this subsection, we demonstrate that FK dynamics after an $O(1)$ burn-in period will be such that the boundary conditions it induces on $\frac{\eta}{2} \log n$ sized balls are (almost) $r$-wired. This will be essential to the application of the spatial mixing results of the previous section. Since the property of being $r$-wired is a monotone increasing property on the distribution over single-component boundary conditions, it will essentially suffice to establish it for the $\mbox{Ber}(\tilde p)$ edge percolation $\widetilde \omega$, and use Lemma~\ref{lem:domination-after-burn-in}. (In reality, there are some added complications by the possible $O(1)$ many extra wirings outside the single-component, as a bound on those extra wirings is no longer an increasing event.) 

In what follows, suppose $G$ is a graph of minimum degree $3$. Let $B_h= B_h(o)$ be the ball of radius $h$ about a fixed vertex $o$ and suppose it is $K$-treelike, meaning the removal of at most $K$ edges from $E(B_h)$ leaves a tree.   
By taking the breadth-first exploration of $B_h$, any vertex $w$ can be assigned a height via $h - d(w,o)$ and \emph{children} which are all vertices adjacent to $w$ having smaller height. Let the \emph{descendant graph} of $w$, denoted $D_w$ (in analogy with $\cT_{h,w}$), be the set of all descendants of $w$, together with their descendants, etc.

\begin{definition}
    A distribution $\sf{P}$ over boundary conditions on a $K$-treelike ball $B_h$ is called $(r,L)$-wired if it is generated as follows: 
    \begin{itemize}
        \item Some arbitrary $L$ vertices in $\partial B_h$ are chosen and an arbitrary wiring is placed on them;
        \item On the remainder, a subset  stochastically dominating the product $\mbox{Ber}(r)$ subset is wired together into one large component.
    \end{itemize}
\end{definition}

The main result of this subsection is that the boundary conditions induced on a treelike ball by the FK dynamics after an $O(1)$ burn-in time are $(\sf{p},L)$-wired. This will follow from the following.

\begin{lemma}\label{lem:burn-in-boundary-conditions} 
    Suppose $G$ has an exponentially strong supercritical phase per Definition~\ref{def:supercritical-Bernoulli-exp}, and for $h = \eta \log n$, the graph $G$ is $(K,h)$-treelike. 
    For every $r$, there exists $\tilde p(\eta,r,c_{\tilde p},\tilde p_0)$ such that if $\omega \succeq \widetilde \omega$, then for every $o\in V(G)$, the distrbituion over boundary conditions induced by $\omega(E(G)\setminus E(B_{h/2}))$ on $\partial B_{h/2}$ is within TV-distance $n^{-5}$ of a $(r,K)$-wired boundary condition. 
\end{lemma}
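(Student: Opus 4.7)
The plan is to construct an $(r,K)$-wired distribution $\mathsf{P}^{\star}$ and couple it to $\xi(\omega)$ so that the two agree outside an event of probability at most $n^{-5}$. I will couple $\omega \supseteq \tilde\omega$ pointwise using the stochastic domination hypothesis and analyze $\tilde\omega \sim \pi_{G,\tilde p,1}$, transferring conclusions to $\omega$ where needed. A first step applies Definition~\ref{def:supercritical-Bernoulli-exp} with $\ell = C\log n$ and a union bound over vertices to show that, with probability at least $1-n^{-10}$, the configuration $\tilde\omega$ has a unique giant component $\mathcal{C}_1(\tilde\omega)$ of size exceeding $n/2$ and all other components of size at most $C\log n$.

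Next, I will produce the main wired component by showing that each $v\in\partial B_{h/2}$ is connected to $\mathcal{C}_1(\tilde\omega)$ through the exterior with probability at least $r$. The $(K,h)$-treelike structure of $B_h$ yields, away from an exceptional set $S\subseteq\partial B_{h/2}$ of size at most $K$ (those boundary vertices whose subtrees meet the non-tree edges of $B_h$), an exterior subtree $T_v\subseteq B_h\setminus E(B_{h/2})$ of branching factor at least $2$ and depth $h/2$, mutually disjoint across $v\notin S$. Applying Fact~\ref{fact:branching-process} to $T_v$, the $\tilde\omega$-percolation cluster of $v$ reaches $\partial B_h$ with probability at least $1-\delta_1(\tilde p)$, and by Definition~\ref{def:supercritical-Bernoulli-exp} applied to any such endpoint in $V(G)\setminus B_h$, that endpoint lies in $\mathcal{C}_1(\tilde\omega)$ with probability at least $1-\delta_2(\tilde p)$. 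Letting $Y_v$ denote the (increasing) indicator of this composite event, $\mathbb{P}(Y_v=1)\ge r$ for $\tilde p$ close to $1$. FKG applied to the increasing family $\{Y_v\}$ will then yield that $W(\tilde\omega)=\{v:Y_v=1\}$ stochastically dominates an i.i.d.\ $\mathrm{Ber}(r)$ subset of $\partial B_{h/2}\setminus S$, and this transfers to $W(\omega)\supseteq W(\tilde\omega)$ under the coupling.

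The most delicate step will be to verify that outside $S$, every $v\in\partial B_{h/2}\setminus W(\omega)$ is a singleton in $\xi(\omega)$---equivalently, that no two non-exceptional boundary vertices share a non-giant component of $\omega|_{E(G)\setminus E(B_{h/2})}$. For non-exceptional $v\ne w$, any path connecting them in the restricted edge set must exit $B_h$ through $v$'s subtree and re-enter through $w$'s disjoint subtree, so the common non-giant component has size at least $h$. By Definition~\ref{def:supercritical-Bernoulli-exp}, the probability that $v$'s non-giant component in $\tilde\omega$ reaches $w$ (forcing it to have size $\ge h$) is at most $e^{-c_{\tilde p}h}=n^{-c_{\tilde p}\eta}\le n^{-100}$ for $\tilde p$ large enough, and a union bound over the $\le n^{2\eta}$ pairs contributes $n^{-10}$. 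The main obstacle is extending this bound from $\tilde\omega$ to $\omega\supseteq\tilde\omega$: extra edges in $\omega\setminus\tilde\omega$ could merge two previously-small components into a non-giant multi-vertex cluster. I will handle this by applying the same Peierls argument directly to $\omega$, using that the supercritical tail holds at parameter $p\ge\tilde p$ (either inherited from the hypothesis on $G$ at this larger parameter or through an upper stochastic-domination $\omega\preceq\pi_{G,p,1}$ combined with control of cluster sizes inside the giant). Combining the resulting high-probability events, defining $\xi^{\star}$ with main wired component $W(\omega)$, arbitrary part $S$, and remaining vertices as singletons yields an $(r,K)$-wired distribution matching $\xi(\omega)$ outside an event of probability $n^{-5}$.
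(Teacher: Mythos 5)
Your outline is close to the paper's in spirit — unique giant, Bernoulli-dominated wired set, singletons elsewhere — but two of your steps have real holes, and in both cases the paper sidesteps the problem with the same device: it encodes the entire argument in two \emph{monotone} events (one decreasing, one increasing) plus a deterministic combinatorial consequence, so that nothing probabilistic ever has to be argued directly about the unknown law of $\omega$.

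\textbf{Gap 1: FKG does not give product domination.} Your composite indicators $Y_v = \{v \text{ percolates through } D_v\} \wedge \{\text{the reached boundary vertex lies in } \mathcal C_1(\widetilde\omega)\}$ are not independent across $v$ (they share the giant), and you appeal to FKG to deduce that $\{Y_v\}$ stochastically dominates an i.i.d.\ $\mathrm{Ber}(r)$ subset. Positive association does \emph{not} imply this: a family with marginals exactly $r$ and perfect positive correlation gives probability $1-r$ to the decreasing event $\{\text{all }0\}$, whereas i.i.d.\ $\mathrm{Ber}(r)$ gives $(1-r)^{|\partial B_{h/2}|}$, so domination fails badly. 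The paper avoids this by defining $E_w$ purely as a function of $\omega(D_w)$ (``the component of $w$ in $\omega(D_w)$ has size $> \tfrac{\eta}{2}\log n$''), so that the $E_w$ on the disjoint simple subtrees are genuinely independent under the product measure $\widetilde\pi$. You do not need to build ``connection to the giant'' into the event: it is a \emph{deterministic} consequence of $E_w$ together with the global uniqueness event.

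\textbf{Gap 2: the non-monotone transfer in your Step 3.} You correctly flag that ``two non-exceptional boundary vertices share a non-giant component'' is not a monotone event in $\omega$, so $\omega\succeq\widetilde\omega$ does not transfer a bound on it. Your proposed remedy — running a Peierls argument ``directly on $\omega$'' at some parameter $p\ge\tilde p$, or via an upper domination $\omega\preceq\pi_{G,p,1}$ — has no basis: the lemma's hypothesis gives only the one-sided bound $\omega\succeq\widetilde\omega$, and $\omega$ in the intended application is an FK-dynamics configuration with no product upper bound. The paper's fix is again structural. It introduces $E_1$ (``$\omega$ has at most one component of size $\ge \tfrac{\eta}{2}\log n$''), observes that $E_1^c$ is contained in the decreasing event ``there exists a connected set $A$ of size in $[\tfrac{\eta}{2}\log n, n/2]$ with $\omega(\partial_e A)\equiv 0$'' (so $\Pr(\omega\notin E_1)\le\Pr(\widetilde\omega\notin E_1)$), and then argues \emph{deterministically}: on $E_1$, any $w$ with a simple subtree either satisfies $E_w$ — in which case it sits in the single large component — or its component in the restricted edge set is confined to $D_w$ with size $<\tfrac{\eta}{2}\log n$, hence $w$ is a singleton in $\xi$. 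The only boundary vertices escaping this dichotomy are the $\le 2K$ ones whose descendant graph is not a simple subtree, which is exactly the $L$ budget of an $(r,L)$-wired condition. No further probability estimate on shared small components, and no union bound over pairs, is needed.

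In short: replace your composite $Y_v$ by the local $E_w$ to regain independence, and replace the shared-component Peierls estimate by the deterministic dichotomy that $E_1\cap\{\text{simple subtree}\}$ forces each boundary vertex to be either in the giant or a singleton.
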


The complications for establishing the above lemma are that the class of $(r,L)$-wired boundary conditions are not a monotone family, and that the wirings of boundary vertices of $B_h$ are dictated by events of connectivity to a giant  which are not independent even under $\widetilde \omega$. We develop an auxiliary set of events on the random-cluster configuration which provide the necessary monotonicity and independence. For this, we note that all cycles in $B_h$ have either $1$ or $2$ vertices of minimal height; for a vertex $w\in \partial B_{h/2}$, we say the descendant graph $D_w$ in $B_h$ is a simple subtree if it does not contain any vertices of minimal height of a cycle of $B_h$.

\begin{itemize}
    \item Define $E_1$ as the event that every component of $\omega$ of size at least $\frac{\eta}{2} \log n$ coincides (i.e., $\omega$ has at most one component of size greater than $\frac{\eta}{2}\log n$). 
    \item For vertices $w\in \partial B_{h/2}$ whose descendant graph $D_w$ in $B_{h}$ is a simple subtree, let $E_w$ be the event that the configuration $\omega(D_w)$ has size greater than $\frac{\eta}{2}\log n$. (For other $w$, let $E_w$ be vacuous.) 
\end{itemize}
On the event $E_1$, the subset of $w\in \partial B_{h/2}$ such that $E_w$ holds will all be wired together through the giant, and the set of additional wirings of boundary vertices must be confined to those $w$ for which its descendant graph in $B_h$ is not a simple subtree, which is deterministically at most $2K$ since $G$ is $(K,h)$-treelike. It therefore suffices for us to establish that for all $\tilde p$ sufficiently large, if $\omega \succeq \widetilde \omega$, 
\begin{align*}
    \mathbb P(\omega \notin E_1) \le n^{-5} \qquad \mbox{and} \qquad \{w:\omega \in E_w\} \succeq \bigotimes_w \text{Ber}(\sf{p})\,,
\end{align*}
whence on the event $E_1$, the boundary conditions induced by $\omega(E(G)\setminus E(B_{h/2})$ on $\partial B_{h/2}$ would be $(r,K)$-wired. 
In particular, Lemma~\ref{lem:burn-in-boundary-conditions} is an immediate consequence of the following two lemmas. 

\begin{lemma}
    Suppose $G$ satisfies Definition~\ref{def:supercritical-Bernoulli-exp} with some $c_{\tilde p}, \tilde p_0$. For every $\eta>0$, there exists $\tilde p_0'$ such that if $\tilde p \ge \tilde p_0'$, and $\omega \succeq \widetilde \omega$, then 
    \begin{align*}
        \mathbb P(\omega \notin E_1)\le n^{-5}\,.
    \end{align*}
\end{lemma}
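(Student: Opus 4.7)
The plan is to reduce the event $\{\omega \notin E_1\}$ to a monotone decreasing event on $\widetilde \omega$, at which point Definition~\ref{def:supercritical-Bernoulli-exp} applies directly via a union bound. First I would fix a monotone coupling of $\omega$ and $\widetilde \omega$ in which $\widetilde \omega \subseteq \omega$ almost surely (such a coupling exists because of the stochastic domination $\omega \succeq \widetilde \omega$). The key combinatorial observation is this: if $\omega$ has at least two components of size at least $\tfrac{\eta}{2}\log n$, then by pigeonhole at least one such component $C$ has $|C| \le n/2$. Being a component of $\omega$, the set $C$ is connected in $G$ and $\omega(\partial_e C) \equiv 0$; since $\widetilde \omega \subseteq \omega$ under the coupling, also $\widetilde \omega(\partial_e C) \equiv 0$.

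Consequently, under this coupling, $\{\omega \notin E_1\}$ is contained in the monotone decreasing event
\[
\mathcal H := \bigl\{\widetilde \omega:\ \exists A \subset V(G)\text{ connected with } \tfrac{\eta}{2}\log n \le |A| \le n/2 \text{ and } \widetilde \omega(\partial_e A) \equiv 0\bigr\}.
\]
A union bound over the choice of a vertex $v \in V(G)$ forced to lie in $A$, together with Definition~\ref{def:supercritical-Bernoulli-exp} applied at $\ell = \tfrac{\eta}{2}\log n$, then gives
\[
\mathbb P(\omega \notin E_1) \ \le\ \widetilde \pi_G(\mathcal H) \ \le\ n \cdot \exp\!\bigl(-c_{\tilde p}\tfrac{\eta}{2}\log n\bigr) \ =\ n^{\,1 - c_{\tilde p}\eta/2}.
\]
Choosing $\tilde p_0'$ large enough that $c_{\tilde p}\eta/2 \ge 6$ for every $\tilde p \ge \tilde p_0'$ (possible since $c_{\tilde p}\to \infty$ as $\tilde p\to 1$) yields the desired bound $n^{-5}$.

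The only subtlety, and the reason this is not a one-line application of stochastic domination, is that $E_1^c$ is itself \emph{non-monotone} in the configuration: adding edges can merge small components into a large one (creating an additional large component) or merge two large components into one (destroying a large component). Hence $\omega \succeq \widetilde \omega$ does not by itself transfer bounds on $E_1^c$ from $\widetilde \omega$ to $\omega$. The relaxation to $\mathcal H$ is what makes the argument go through: $\mathcal H$ \emph{is} decreasing in the configuration, it is implied by $E_1^c$ (via the pigeonhole step above), and it is tailored precisely to the hypothesis of Definition~\ref{def:supercritical-Bernoulli-exp}. No further ingredients are needed.
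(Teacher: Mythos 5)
Your proof is correct and follows essentially the same route as the paper: reduce $E_1^c$ (noting two components of size $\ge n/2$ must coincide, so one offending component has size at most $n/2$) to the decreasing event that some connected $A$ with $\tfrac{\eta}{2}\log n \le |A| \le n/2$ has closed edge-boundary, push this through stochastic domination to $\widetilde\omega$, and close with a union bound over $v$ plus Definition~\ref{def:supercritical-Bernoulli-exp}. Your explicit remark on why $E_1^c$ itself is non-monotone is a nice clarification that the paper leaves implicit; the only cosmetic discrepancy is that the paper's proof writes $\log_\Delta n$ in place of the $\log n$ appearing in the definition of $E_1$, which affects only the unimportant constant in the exponent.
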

\begin{proof}
    In order for $\omega \notin E_1$, there must exist a component of size between $\frac{\eta}{2} \log_\Delta n$ and $n/2$ (two components both of size at least $n/2$ evidently coincide). Thus, by a union bound the probability of $\omega \notin E_1$ is bounded by the probability that there exists some connected set $A$ of size between $\frac{\eta}{2}\log_\Delta n$ and $n/2$ with $\omega(\partial_e A) \equiv 0$; this being a decreasing event, it suffices to upper bound its probability under $\widetilde \omega$, whence we can apply Definition~\ref{def:supercritical-Bernoulli-exp} with $\ell = \frac{\eta}{2}\log_\Delta n$ to get 
    \begin{align*}
        \widetilde \pi(\widetilde \omega \notin E_1) \le n e^{ - c_{\tilde p}\eta \log n}\,.
    \end{align*}
    Since $c_{\tilde p}$ goes to $\infty$ as $\tilde p \to 1$, as long as $\tilde p$ is larger than some $\tilde p_0(\eta)$ 
    the right-hand side will be at most $n^{-5}$.    
\end{proof}

\begin{lemma}
    Suppose $G$ is $(K,h)$-treelike For every $r<1$, there exists $\tilde p$ such that if $\omega \succeq \widetilde \omega$ then the distribution of $\{w: \omega \in E_w\}$ stochastically dominates a $\text{Ber}(r)$ subset of $\partial B_{h/2}$.
\end{lemma}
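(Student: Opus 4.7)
The plan is to identify a ``good'' subset $W \subseteq \partial B_{h/2}$ consisting of those $w$ whose descendant graph $D_w$ is a simple subtree, apply the branching process estimate of Fact~\ref{fact:branching-process} to show $\widetilde\pi(E_w) \ge r$ for each such $w$, and use the fact that the $D_w$'s for $w \in W$ are pairwise vertex-disjoint to lift the marginal bound to a product Bernoulli stochastic domination, which will then transfer to $\omega$ via monotonicity.

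First I would argue that $|\partial B_{h/2} \setminus W| \le K$ using the $(K,h)$-treelike assumption: fix a set of at most $K$ extra edges whose removal turns $B_h$ into a tree. Each extra edge creates a single cycle in $B_h$, and this cycle has a unique minimal-height vertex (its ``top''); $D_w$ fails to be a simple subtree precisely when such a top is a descendant of $w$, which can happen for at most one $w \in \partial B_{h/2}$ per cycle. For a good $w \in W$, the graph $D_w$ is exactly the BFS-subtree rooted at $w$ (no sibling or cross edges inside it), so every internal vertex has at least two children (since $G$ has minimum degree $3$ and one neighbor is the parent). Moreover, for distinct $w,w' \in W$ the trees $D_w$ and $D_{w'}$ must be vertex-disjoint, since any shared descendant would produce a cycle in one of them.

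Next I would bound $\widetilde\pi(E_w)$ from below for each $w \in W$. The component of $w$ in $\widetilde\omega(D_w)$ stochastically dominates a branching process whose offspring distribution dominates $\bin(2,\tilde p)$, so Fact~\ref{fact:branching-process} gives $Z_k \ge (2\tilde p-\epsilon)^k$ for every $k \le h/2$ with probability at least $1-\delta(\tilde p)$, where $\delta(\tilde p) \to 0$ as $\tilde p \to 1$. Choosing $\tilde p$ close enough to $1$ that $2\tilde p - \epsilon > 1$, the component of $w$ has size at least $(2\tilde p - \epsilon)^{h/2} \gg \tfrac{\eta}{2}\log n$ for $n$ large, so $\widetilde\pi(E_w) \ge 1-\delta(\tilde p)$; further increasing $\tilde p$ makes $1-\delta(\tilde p) \ge r$.

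Finally, since $\{D_w : w \in W\}$ are pairwise vertex-disjoint, the events $\{E_w : w \in W\}$ depend on disjoint sets of edges and hence are mutually independent under the product measure $\widetilde\pi$. Thus $\{w \in W : \widetilde\omega \in E_w\}$ stochastically dominates an independent $\ber(r)$ subset of $W$. Because $E_w$ is an increasing event, a monotone coupling realizing $\omega \succeq \widetilde\omega$ gives $\{w \in W : \widetilde\omega \in E_w\} \subseteq \{w \in W : \omega \in E_w\}$, so the product domination transfers to $\omega$; the at most $K$ bad vertices of $\partial B_{h/2}\setminus W$ are absorbed into the arbitrary-wiring part of the $(r,K)$-wired boundary condition. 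The main subtlety is ensuring genuine independence rather than a mere marginal bound, which is what forces the graph-theoretic disjointness argument in the first step.
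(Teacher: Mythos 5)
Your proposal is correct and takes essentially the same route as the paper: reduce to $\widetilde\omega$ by monotonicity of the increasing events $E_w$, observe that the $D_w$'s are pairwise disjoint for the $w$ with simple-subtree descendant graphs so that the $E_w$'s are independent under the product measure, and lower-bound $\widetilde\pi(E_w)$ via Fact~\ref{fact:branching-process} applied to the binary tree contained in $D_w$.

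One small slip worth flagging: the paper's convention is height $= h - d(w,o)$, so \emph{minimal}-height vertices of a cycle are the ones \emph{farthest} from the root (the ``bottom,'' not the ``top''), and a cycle can have one or two of these, which is why the paper counts at most $2K$ bad vertices rather than your $K$. This sign confusion also makes your justification of the branching-factor claim slightly too weak as written (``no sibling or cross edges inside $D_w$'' is not by itself enough to rule out an extra edge with exactly one endpoint in $D_w$); the correct definition of simple subtree is what actually forces every extra-edge endpoint to lie entirely outside $D_w$, since the deeper endpoint of any extra edge incident to $D_w$ would necessarily be a descendant and hence a forbidden minimal-height vertex. Neither discrepancy affects the lemma's conclusion, since the bound on $|\partial B_{h/2}\setminus W|$ is not used there and your set $W$ is defined correctly at the outset.
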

\begin{proof}
    Since the $E_w$ are increasing events, $\{w: \omega \in E_w\}$ stochastically dominates $\{w: \widetilde \omega \in E_w\}$, it suffices to establish the above for $\widetilde \omega$. Since the descendant graphs $D_w$ are disjoint for the $w\in \partial B_{h/2}$ for which $D_w$ are subtrees  (and hence $E_w$ is not vacuous), the events $E_w$ are independent under $\widetilde \omega$. It remains to argue that $\mathbb P(\widetilde \omega \in E_w)\ge r$ as long as $\tilde p$ is sufficiently large. Since the descendant graphs $D_w$ contain full binary trees as subgraphs, the probability of $\widetilde \omega \in E_w$ is larger than the probability that the branching process with offspring distribution $\text{Bin}(2,\tilde p)$ survives $\frac{\eta}{2}\log n$ generations, which happens with probability going to $1$ as $\tilde p\to 1$: see e.g.,~Fact~\ref{fact:branching-process}. In particular, for every $r$, the probability of $\widetilde \omega\in E_w$ is greater than $r$ as long as $\tilde p$ is large enough (depending only on $r$). 
\end{proof}

\subsection{Spatial mixing on treelike balls with $(r,L)$-wired boundary}\label{subsec:spatial-mixing-treelike-balls}

We now describe how minor adjustments to Section~\ref{sec:tree-spatial-mixing} lead to spatial mixing on treelike graphs with boundary conditions that are $(r,L)$-wired. 

\begin{lemma}\label{lem:treelike-spatial-mixing}
	Suppose $B_h$ is $K$-treelike, and suppose $\mathsf{P}$ is $(r,L)$-wired. Then except with $\sf P$-probability $e^{ - c_{r} (1.1)^{h/(2K)}}$ (with $c_{r}>0$ for $r$ large), $\xi$ is such that 
    \begin{align*}
        \|\pi_{B_h}^\xi(\omega(B_{h/2})\in \cdot) - \pi_{B_h}^\one(\omega(B_{h/2})\in \cdot)\|_\tv \le Ce^{ - c_p (h/(2K) - K - L)}\,.
	\end{align*}
 for some constant $c_p$ going to $\infty$ as $p\to 1$. 
\end{lemma}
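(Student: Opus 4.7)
The plan is to adapt the two-step argument of Section~\ref{sec:tree-spatial-mixing} (revealing scheme plus good-boundary estimate) to the $K$-treelike, $(r,L)$-wired setting. To that end, I first fix a spanning tree $\widehat\cT$ of $B_h$ obtained by removing an edge set $F\subset E(B_h)$ with $|F|\le K$, write $\mathcal W=V(F)$ (so $|\mathcal W|\le 2K$), and let $S_L\subset \partial B_h$ denote the $L$ boundary vertices on which $\xi$ is arbitrarily wired; on $\partial B_h\setminus S_L$ the wired component of $\xi$ stochastically dominates a Bernoulli$(r)$ subset. Call a vertex $w\in V(\widehat\cT)$ \emph{clean} if its descendant subtree $\widehat\cT_{h,w}$ contains no vertex of $\mathcal W$, and call a downward path in $\widehat\cT$ clean if it avoids $\mathcal W$.

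First, I generalize Definition~\ref{def:separating-set} by declaring $S\subset V(B_h)\setminus V(B_{h/2})$ a wired separating set if (i) $S$ separates $\partial B_{h/2}$ from $\partial B_h$ inside $B_h$, (ii) every $v\in S$ is clean, and (iii) each $v\in S$ is connected through $\omega(\widehat\cT_{h,v})$ to some vertex of $\partial \widehat\cT_{h,v}\setminus S_L$ lying in the large wired component of $\xi$. The monotone revealing procedure of Lemma~\ref{coupling-with-separating-set} then runs verbatim on $\widehat\cT$, with the modification that whenever the current $w_i$ is not clean its parent is unconditionally pushed into $\cP_i$. Since $|\mathcal W|\le 2K$, this causes at most $O(K)$ forced upward moves before termination, so as long as $h/2>K$ the lowest wired separating set still lies strictly above $B_{h/2}$, yielding $\|\pi_{B_h}^\xi(\omega(B_{h/2})\in \cdot)-\pi_{B_h}^\one(\omega(B_{h/2})\in \cdot)\|_{\tv}\le \pi_{B_h}^\xi(\mathcal S_{h,\xi}^c)$.

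Second, I bound $\pi_{B_h}^\xi(\mathcal S_{h,\xi}^c)\le \widetilde \pi_{B_h}(\widetilde\omega\in \mathcal S_{h,\xi}^c)$ by monotonicity of the event and apply the good-boundary argument of Lemma~\ref{lem:xi-not-c-good}. Any downward path $v_0,\dots,v_{h/2}$ in $\widehat\cT$ is split by $\mathcal W$ into at most $2K+1$ cycle-free segments, at least one of which (call its length $H$) satisfies $H\ge h/(4K)$; on the $H$ slabs $\widehat\cT_{v_i}\setminus \widehat\cT_{v_{i+1}}$ inside this segment the subtrees have minimum internal degree $\ge 2$, so Fact~\ref{fact:branching-process} applies. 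The Markov/averaging step of Lemma~\ref{lem:xi-not-c-good} then shows that each slab independently connects down to the Bernoulli$(r)$ part of $\xi$ with $\widetilde \pi$-probability $\ge \gamma(p,r)$, outside a $\mathsf P$-event of probability $\exp(-(2\tilde p-\epsilon)^{h/(4K)}\log(1/(1-r))/2)$; the $L$ excluded boundary vertices can spoil at most $L$ of these slabs per path. A union bound over the $\Delta^{h/2}$ paths and the $H$ slab indices then delivers the stated $\mathsf P$-concentration $\exp(-c_r(1.1)^{h/(2K)})$ and TV rate $\exp(-c_p(h/(2K)-K-L))$.

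The main obstacle is the combinatorial bookkeeping in the adapted revealing procedure: one must argue that the forced parent-additions triggered by non-clean vertices cannot compound so as to drag the frontier $\cP$ all the way down into $B_{h/2}$. The key observation is that each non-clean vertex corresponds to an edge of $F$ in its descendant subtree, and each such edge can trigger at most $O(1)$ forced upward moves over the entire course of the procedure, so the total budget of extra levels consumed is $O(K)$ rather than $O(Kh)$. This is what makes the additive $-K$ appear in the rate $c_p(h/(2K)-K-L)$ without disturbing the overall exponential decay, and is also what pins down the $1/(2K)$ factor via the pigeonhole-style decomposition of each downward path into clean segments.
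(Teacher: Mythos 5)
The central gap is in your modified revealing scheme. You define a vertex $w$ to be clean if its descendant subtree $\widehat\cT_{h,w}$ contains no vertex of $\mathcal W=V(F)$, and propose to push the parent up unconditionally whenever the frontier vertex $w_i$ is non-clean. But that notion of non-cleanliness is inherited upward: if $w$ is non-clean then so is every ancestor $w'$, since $\widehat\cT_{h,w'}\supset\widehat\cT_{h,w}$. Consequently, once the frontier lands on a non-clean vertex, the forced parent-additions cascade level by level all the way to the root, and $\cR$ ends up covering all of $B_{h/2}$ whenever any $F$-edge has an endpoint in $B_h\setminus B_{h/2}$. Your key claim that ``each such edge can trigger at most $O(1)$ forced upward moves'' is therefore false --- a single edge $f\in F$ flags all $\Theta(h)$ of its ancestors as non-clean --- so the budget-of-$O(K)$-levels accounting does not close the argument; the failure is a structural cascade, not a counting error that can be charged per edge of $F$.

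The paper sidesteps this by pigeonholing over \emph{levels} rather than over segments of individual downward paths: among the $h/2$ levels between depth $h/2$ and $h$, some stretch of $h/(2K)$ consecutive levels contains no endpoint of an $F$-edge, so the restriction of $B_h$ to that stretch is a genuine forest. The proof first reveals the entire configuration below that stretch (including all of $F$) under the monotone coupling, which produces a single-component boundary condition at the bottom of the stretch, and only then applies Lemma~\ref{coupling-with-separating-set} inside the clean forest, where no special handling of non-tree edges is needed. Your vertical decomposition of each path into at most $2K+1$ cycle-free pieces is a reasonable analogue of the paper's ``disregard $\le K+L$ bad subtrees'' step in the probability estimate, but it cannot be wired into the revealing procedure the way you propose, because the frontier cannot choose per path which segment to exploit --- the clean segment has to be the same across all paths simultaneously, which is exactly what the horizontal slicing buys. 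As a secondary issue, running the revelation on $\widehat\cT$ while the underlying measure lives on $B_h$ (with $F$ still present) also breaks the independence-across-subtrees that Lemma~\ref{coupling-with-separating-set} relies on; the paper's up-front revelation of everything outside the clean stretch also absorbs this.
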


\begin{proof}
    Since $B_h$ is $K$-treelike, there is an edge-set $H$ of size at most $K$ such that $B_h \setminus H$ is a tree. Following the breadth-first search of $B_h$, there is a stretch of at least $h/(2K)$ consecutive levels between depth $h/2$ and $h$ such that the restriction of $B_h$ to those levels is a forest and every tree in that forest contains a full binary tree as a subgraph (using the minimum degree $3$ condition). There exists $m\ge h/2+ h/(2K)$ such that this forest is $B_{m}\setminus B_{m-h/(2K)}$. 
    
    Let $\tilde S_{h,\xi}$ be the event that there is a wired separating surface in every one of the constituent trees of height $h/(2K)$ in the configuration $\omega(B_h\setminus H)$. (Notice that since the boundary conditions are at depth $h$, this depends on the full configuration, not just the restriction to the stretch of $h/(2K)$ heights; moreover by definition of wired separating surface, if $\omega(B_h\setminus H)$ has a wired separating surface then so does $\omega(B_h)$.)  
    
    The first claim is that under this definition, there is a minor modification of the revealing procedure of Lemma~\ref{coupling-with-separating-set} such that the probability of $\omega_\xi \in \tilde S_{h,\xi}^c$ upper bounds the total-variation distance. This is done by first revealing the entire configurations $\omega_\xi$ and $\omega_\one$ on $(B_h\setminus H)\setminus B_{m+h/(2K)}$ under the monotone coupling. In this manner, the revealed part of $\omega_\xi$ induces some single-component boundary conditions $\tilde \xi$ on $B_{m+h/(2K)}$. We can then apply Lemma~\ref{coupling-with-separating-set} to each of the constituent trees of $B_{m}\setminus B_{m-h/(2K)}$ and it follows that on the event that they all have wired separating surfaces in $\omega_\xi$, then $\omega_\xi$ is coupled to $\omega_\one$ above those wired separating surfaces and in particular on all of $B_m$. As such, we have the analogous
    \begin{align*}
       \|\pi_{B_h}^\xi(\omega(B_{h/2})\in \cdot) - \pi_{B_h}^\one(\omega(B_{h/2})\in \cdot)\|_\tv \le \pi_{B_h^\xi}(\omega(B_{h}\setminus H)\in \tilde S_{h,\xi}^c)\,.
    \end{align*}

    To control the probability of $\tilde S_{h,\xi}^c$, we follow the reasoning of Lemma~\ref{lem:xi-not-c-good}, with the modifications being minimal. The only difference that arises is that when considering the probability of the event in item (2) in that proof, the sub-tree from a vertex $v_i\in B_{m+h/(2K)}\setminus B_m$ in $B_h\setminus H$ no longer necessarily contains a full binary tree: for up to $K$ many vertices, it could be the pruning of a binary tree with up to $K$ many subtrees of depth at least $h/2K$ deleted from it. The easiest thing to do is to simply disregard these vertices which leads to a change from $h$ to $h-K$ in the concentration quality. Similarly, at most $L$ of the subtrees contain at their boundary one of the $L$ vertices where the $(r,L)$ boundary conditions are arbitrarily rewired, and we can disregard these $L$ vertices as well. Nothing else will be affected in the proof. 
\end{proof}

\subsection{Local mixing time on treelike balls with $(r,L)$-wired boundary}\label{subsec:mixing-time-treelike-balls}
We now show that the $\eta \log n$-radius balls in the treelike expander, with the boundary conditions induced on them by the remainder of the FK dynamics configuration, have a polynomial mixing time. By Lemma~\ref{lem:burn-in-boundary-conditions}, after a burn-in, these will look like treelike balls with boundary conditions that have an $O(1)$ number of distinct components, one of them being macroscopic, and the rest all being $O(1)$ sized. At this point we can appeal to comparison estimates for Markov chains together with the bound on the inverse spectral gap on trees with single-component mixing times from Lemma~\ref{lem:wired-tree-spectral-gap}. 

\begin{remark}
We do not use any further information on the boundary conditions (like the $r$-wired property or the randomness), only the fact that there is at most $1$ large component and $O(1)$ many vertices in the union of all other components. 
In theory, it is likely that we could use the $r$-wired property to get a significantly better bound on the mixing time of a tree. However, the extra $O(1)$ wirings and the $O(1)$ many non-tree edges force us to at some point perform a comparison through a spectral gap, and getting back a mixing time bound from this will anyways end up costing a factor of the volume per~\eqref{eq:tmix-trel-comparison}.   
\end{remark}

 To perform comparisons, we formalize a notion of distance between boundary conditions. Two ``similar'' random-cluster boundary conditions (in terms of the wiring they induce)
have similar effects on the underlying random-cluster distribution and on the behavior of the corresponding FK dynamics.
In turn, the Dirichlet form, and spectral gaps of their corresponding dynamics should be ``close" to one another. 
We compile a few definitions and results that formalize this idea.

\begin{definition}
	 For two boundary conditions $\phi \leq \phi'$, define $D(\phi,\phi') := k(\phi) - k(\phi')$ where $k(\phi)$ is the number of components in $\phi$. For two partitions $\phi, \phi'$ that are not comparable, let $\phi''$ be the smallest partition such that $\phi'' \geq \phi$ and $\phi'' \geq \phi'$ and set $D(\phi,\phi') = k(\phi) - k(\phi'')+ k(\phi')- k(\phi'')$. 
\end{definition}

The following lemma is then straightforward from the definition of the random-cluster measure~\eqref{eq:fk-definition}. 

\begin{lemma}[E.g., Lemma 2.2 from~\cite{BGVfull}]
	\label{lemma:simple-rc-bound}
	Let $G$ be arbitrary, $p \in (0,1)$ and $q > 0$. Let $\phi$ and $\phi'$ be any two partitions of $V(G)$. Then, for all random-cluster configurations $\omega\in \{0,1\}^E$, we have
	$$
	q^{-2D(\phi,\phi')} {\pi_{G}^{\phi'}(\omega)} \le \pi_{G}^{\phi}(\omega) \le q ^{2D(\phi,\phi')} \pi_{G}^{\phi'}(\omega)\,.
	$$  
\end{lemma}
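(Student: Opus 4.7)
My plan is to exploit the fact that in the random-cluster density~\eqref{eq:fk-definition} the boundary partition $\phi$ enters only through the component count $k(\omega;\phi)$. The whole proof thus reduces to a uniform pointwise bound on $|k(\omega;\phi) - k(\omega;\phi')|$, from which both the unnormalized weight ratio and the partition function ratio can be controlled.

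I will first handle the comparable case $\phi \le \phi'$, in which $\phi'$ is the coarser partition and $D(\phi,\phi') = k(\phi)-k(\phi')$. Passing from the $\phi$-wiring to the $\phi'$-wiring amounts to performing $D(\phi,\phi')$ successive merges of boundary blocks; each such merge can decrease $k(\omega;\cdot)$ by at most one, so
$$0 \le k(\omega;\phi) - k(\omega;\phi') \le D(\phi,\phi') \qquad \text{for every } \omega.$$
Letting $\tilde\pi_G^{\phi}(\omega)$ denote the unnormalized random-cluster weight, this pointwise bound yields $q^{-D(\phi,\phi')}\, \tilde\pi_G^{\phi'}(\omega) \le \tilde\pi_G^{\phi}(\omega) \le q^{D(\phi,\phi')}\, \tilde\pi_G^{\phi'}(\omega)$, and summing over $\omega$ gives the same sandwich for the partition functions $Z_G^{\phi},Z_G^{\phi'}$. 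Combining the weight and partition-function bounds in the (possibly opposite) worst-case directions then produces
$$q^{-2D(\phi,\phi')}\,\pi_G^{\phi'}(\omega) \le \pi_G^{\phi}(\omega) \le q^{2D(\phi,\phi')}\,\pi_G^{\phi'}(\omega),$$
which is the claim; the factor of two in the exponent is the harmless slack from this crude composition.

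For two incomparable partitions $\phi,\phi'$, I will route through the smallest common coarsening $\phi''$ given in the lemma; by construction $\phi \le \phi''$, $\phi' \le \phi''$, and $D(\phi,\phi') = D(\phi,\phi'') + D(\phi',\phi'')$. Applying the comparable case to each of the pairs $(\phi,\phi'')$ and $(\phi',\phi'')$ and multiplying the resulting inequalities gives the lemma with the same constant $q^{\pm 2D(\phi,\phi')}$. The only genuinely combinatorial step — and the one I would write out most carefully — is the pointwise merge bound $k(\omega;\phi)-k(\omega;\phi') \le D(\phi,\phi')$ in the comparable case; once that is in hand, everything else is routine algebra on the explicit weight formula~\eqref{eq:fk-definition}.
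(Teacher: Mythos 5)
Your argument is correct and is the standard proof of this kind of boundary-condition comparison bound. The paper itself does not prove this lemma --- it cites it from~\cite{BGVfull} and describes it as ``straightforward from the definition'' --- so there is no paper-internal proof to compare against, but what you write is exactly the expected one: reduce to the pointwise bound $0\le k(\omega;\phi)-k(\omega;\phi')\le D(\phi,\phi')$ via single-block merges when $\phi\le\phi'$, transfer this simultaneously to the unnormalized weights and (by summation) to the partition functions, compose the two ratios to lose a factor of $2$ in the exponent, and route incomparable pairs through the join $\phi''$ using additivity $D(\phi,\phi')=D(\phi,\phi'')+D(\phi',\phi'')$. One small caveat worth noting (and it is a feature of the original lemma statement rather than of your argument): your intermediate sandwich $q^{-D}\tilde\pi^{\phi'}\le\tilde\pi^\phi\le q^{D}\tilde\pi^{\phi'}$, like the final two-sided bound, is only meaningful when $q\ge 1$; for $q\in(0,1)$ the directions reverse and one should instead write the bound with $\max(q,q^{-1})^{D}$. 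Since the paper restricts to $q\ge 1$ throughout (positive association is invoked repeatedly), this does not affect anything, but it would be cleaner to say so explicitly.
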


The following corollary is a standard comparison of spectral gaps, and follows from Lemma~\ref{lemma:simple-rc-bound}, the definition of the transition matrix of the FK dynamics, and Theorem 4.1.1 in~\cite{SClecture-notes}.

\begin{cor}
    \label{cor:simple-spectral-gap-bound}
	Let $G=(V,E)$ be an arbitrary graph, $p \in (0,1)$ and $q > 0$.
	Consider the FK dynamics on $G$ with boundary conditions $\phi$ and $\phi'$,
	and let $\lambda$, $\lambda'$ denote their respective spectral gaps.
	Then, 
	$$
	q^{-5 D(\phi,\phi')} \lambda' \le \lambda \leq   q^{ 5 D(\phi,\phi')} \lambda'\,.
	$$
\end{cor}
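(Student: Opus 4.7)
The plan is to carry out a standard Dirichlet-form comparison for reversible Markov chains on the common state space $\{0,1\}^{E(G)}$, combining Lemma~\ref{lemma:simple-rc-bound} with an explicit, boundary-condition-independent bound on the ratio of the two FK-dynamics transition probabilities. By symmetry (swap $\phi\leftrightarrow\phi'$), it suffices to prove $\lambda\ge q^{-5D}\lambda'$, where I write $D=D(\phi,\phi')$. If $D=0$ then $\phi=\phi'$ and there is nothing to show, so I may assume $D\ge 1$.

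First, Lemma~\ref{lemma:simple-rc-bound} gives $q^{-2D}\pi_G^{\phi'}(\omega)\le\pi_G^{\phi}(\omega)\le q^{2D}\pi_G^{\phi'}(\omega)$ for every $\omega$. Applying this pointwise with the centering argument $\var_\mu[f]=\inf_c \E_\mu[(f-c)^2]$ and $c=\E_{\pi^{\phi'}}[f]$ yields the variance comparison $\var_{\pi^\phi}[f]\le q^{2D}\var_{\pi^{\phi'}}[f]$ for every $f$. Next I would compare transition probabilities: for $\omega'=\omega\oplus e$, both $P^\phi(\omega,\omega')$ and $P^{\phi'}(\omega,\omega')$ belong to $\{p,\ps,1-p,1-\ps\}/|E(G)|$ per~\eqref{eq:FK dynamics-update-rule}, differing only through whether $e$ is a cut-edge under the respective wiring. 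Since $\ps=p/(p+(1-p)q)$ with $q\ge 1$, a direct check shows that every pair of these four rates lies within a factor of $q$ of each other. Therefore $P^\phi(\omega,\omega')/P^{\phi'}(\omega,\omega')\in[1/q,q]$ whenever both sides are nonzero, and combining with the $\pi$-comparison I obtain the edgewise bound $\pi^\phi(\omega)P^\phi(\omega,\omega')\ge q^{-2D-1}\pi^{\phi'}(\omega)P^{\phi'}(\omega,\omega')$. Plugging this into~\eqref{eq:Dirichlet-form} gives $\mathcal E^\phi(f,f)\ge q^{-2D-1}\mathcal E^{\phi'}(f,f)$ for every $f$.

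With these two comparisons in hand, the variational formula~\eqref{eq:lsi-constant} finishes the argument: for every $f$ with $\var_{\pi^\phi}[f]\ne 0$,
\[
\frac{\mathcal E^\phi(f,f)}{\var_{\pi^\phi}[f]}\;\ge\;\frac{q^{-2D-1}\mathcal E^{\phi'}(f,f)}{q^{2D}\var_{\pi^{\phi'}}[f]}\;\ge\;q^{-4D-1}\lambda'\;\ge\;q^{-5D}\lambda',
\]
where the last inequality uses $D\ge 1$ (so $-4D-1\ge -5D$). Taking the minimum over $f$ gives $\lambda\ge q^{-5D}\lambda'$, which is exactly what is claimed; this is the form in which Theorem~4.1.1 of~\cite{SClecture-notes} is being invoked.

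The only conceptual subtlety, rather than a genuine obstacle, is that $P^\phi$ and $P^{\phi'}$ are truly different transition kernels because cut-edge status depends on the boundary wiring, so one cannot use detailed balance to push $\pi^\phi/\pi^{\phi'}$ through without touching the rates. The observation that rescues the argument is that the ``cut'' and ``non-cut'' update probabilities $\{p,\ps\}$ (and likewise $\{1-p,1-\ps\}$) are comparable up to a single factor of $q$ independent of the boundary condition, which is what keeps the dependence on $D$ additive and yields the clean exponent $5$ in the final bound.
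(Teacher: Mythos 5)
Your proof is correct and follows the same route the paper has in mind when it cites Theorem 4.1.1 of~\cite{SClecture-notes}: a variance comparison from Lemma~\ref{lemma:simple-rc-bound}, a Dirichlet-form comparison from the transition rates, and then the Rayleigh quotient. The one slip is the claim that ``every pair of these four rates lies within a factor of $q$ of each other'' --- this is false in general, since $p$ and $1-p$ need not be comparable when $p$ is near $0$ or $1$; what you actually need (and what your next sentence correctly uses) is that a transition $\omega\to\omega\oplus e$ has the same type (add or remove $e$) under both kernels, so $P^\phi(\omega,\omega')/P^{\phi'}(\omega,\omega')$ only ever compares $p$ with $\ps$ or $1-p$ with $1-\ps$, each ratio lying in $[1/q,q]$ for $q\ge 1$. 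With that phrasing corrected the argument is complete and matches the intended proof.
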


Using the above, we are able to deduce the following bound. 

\begin{lemma}\label{lem:treelike-mixing-time}
	Consider a $K$-treelike ball $B_h$ with boundary conditions $\xi$ that have one component of arbitrary size together with at most $L$ many additional boundary wirings. There exists $a(\Delta,q)$ such that the inverse spectral gap on $B_h^\xi$ is at most $C_{p,q} \exp(a(h+K+L))$. 
\end{lemma}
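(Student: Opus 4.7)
The plan is to adapt the canonical-paths argument used in the proof of Lemma~\ref{lem:wired-tree-spectral-gap} to accommodate both the $K$ non-tree edges of the $K$-treelike ball $B_h$ and the $L$ additional wirings in the boundary $\xi$ beyond the single large component.

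First I would bound the edge-cut-width of $B_h$. Write $B_h = T \cup F$, where $T$ is a spanning tree on $V(B_h)$ of maximum degree at most $\Delta$ and depth at most $h$, and $|F|\leq K$ (such a decomposition exists by the definition of $K$-treelike). The bound from \cite[Lemma 2.1]{kenyon2001glauber-ptrf} used inside the proof of Lemma~\ref{lem:wired-tree-spectral-gap} supplies an ordering of $E(T)$ whose edge-cut-width is at most $K_\Delta h$ for some $K_\Delta(\Delta)$. Extending this ordering to $E(B_h)$ by inserting the at most $K$ edges of $F$ at arbitrary positions, each insertion adds at most two new vertices to any prefix's vertex boundary, so $\mathsf{CW}(B_h) \leq K_\Delta h + 2K$.

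Next I would run the congestion estimate from Lemma~\ref{lem:wired-tree-spectral-gap} verbatim, using canonical paths $\gamma_{I,F}$ induced by the above ordering and the same edge-swap bijection $\omega_\eta(I,F)$. The factors $p^{|\omega|}(1-p)^{|E(B_h)|-|\omega|}$ cancel exactly because the number of open edges in the multiset $\{I,F\}$ coincides with that in $\{\eta,\omega_\eta(I,F)\}$, so only component counts contribute. The substitute for inequality~\eqref{eq:component-change} that I would prove is
\[
\bigl|k_\xi(I)+k_\xi(F)-k_\xi(\eta)-k_\xi(\omega_\eta(I,F))\bigr| \;\leq\; 2\,\mathsf{CW}(B_h) + 2(L+1),
\]
obtained by iterating the splitting trick from Lemma~\ref{lem:wired-tree-spectral-gap}: for each of the at most $L+1$ non-trivial wired components of $\xi$, split it into its intersection with the left vertex set $V(\{e_{\sigma(j)}:j\leq i\})$ and the right vertex set $V(\{e_{\sigma(j)}:j>i\})$. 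Each such split modifies each of the four component counts by at most $1$, contributing at most $2$ to the signed sum, and after all $L+1$ splits the modified boundary acts independently on the two sides so the no-boundary bound applies.

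Combining the pieces, one obtains $\lambda_P^{-1} \leq C|E(B_h)|^2\, q^{2K_\Delta h + 4K + 2(L+1)} \leq C_{p,q}\exp(a(h+K+L))$ for a suitable $a=a(\Delta,q)$, the polynomial factor $|E(B_h)|^2 \leq \Delta^{2(h+1)}$ being absorbed into the exponential. The main obstacle is verifying that iterating the single-component splitting trick remains additive (rather than compounding multiplicatively) in the number of non-trivial wired components of $\xi$; but since the splits of distinct components have disjoint effects on $k_\xi$, this additivity is immediate. An alternative, slightly less clean route would be to first invoke Lemma~\ref{lem:wired-tree-spectral-gap} on $T$ with single-component boundary, then absorb the $L$ extra wirings via Corollary~\ref{cor:simple-spectral-gap-bound} at cost $q^{5L}$, and finally handle the $F$ edges via a Markov chain decomposition conditioning on $\omega(F)$; this yields the same bound but is more cumbersome.
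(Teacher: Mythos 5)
Your proposal is correct, and it takes a genuinely different route from the paper. The paper proves the lemma via a chain of comparisons rather than a direct canonical-paths argument: it first wires up the endpoints of the $K$ non-tree edges $H$ via a boundary modification (paying $q^{10K}$ through Corollary~\ref{cor:simple-spectral-gap-bound}), observes that the resulting chain tensorizes into the FK dynamics on the tree $B_h\setminus H$ plus $|H|$ independent single-edge chains, then pays a further $q^{5(K+L)}$ comparison cost to strip the $H$-wirings and the $L$ extra wirings so as to land exactly in the single-component-tree setting of Lemma~\ref{lem:wired-tree-spectral-gap}. This is close to, but not identical with, the ``alternative route'' you sketch at the end (the paper uses tensorization for the $F=H$ edges rather than a Markov-chain decomposition conditioning on $\omega(F)$). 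Your main argument instead redoes the canonical-paths computation of Lemma~\ref{lem:wired-tree-spectral-gap} directly on $B_h^\xi$: the edge-cut-width bound $\mathsf{CW}(B_h)\le K_\Delta h + 2K$ is correct (inserting each of the $\le K$ extra edges into the tree ordering enlarges any prefix/suffix vertex boundary by at most two vertices), and the iterated-splitting estimate $\bigl|k_\xi(I)+k_\xi(F)-k_\xi(\eta)-k_\xi(\omega_\eta(I,F))\bigr|\le 2\mathsf{CW}(B_h)+2(L+1)$ is sound: each split changes each $k_\xi(\cdot)$ by an amount in $\{0,1\}$, so the signed sum moves by at most $2$, and the splits of distinct wired components affect disjoint parts of the boundary so the contributions add. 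Both approaches give the same $\exp(O(h+K+L))$ bound; your one-pass congestion argument is self-contained and avoids the layered comparisons, while the paper's approach is more modular and reuses Lemma~\ref{lem:wired-tree-spectral-gap} as a black box.
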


\begin{proof}
	Consider the modification of $B_h^\xi$ where all endpoints of the set $H$ are wired up to one another via a boundary condition, and denote it by $\widetilde B_h^\xi$. By Corollary~\ref{cor:simple-spectral-gap-bound}, their spectral gaps are within a factor of $q^{ 10|H|}$ of one another. 
     The FK dynamics on $\widetilde B_h^\xi$ are a product of the FK dynamics on $B_h\setminus H$ with boundary conditions $\xi$ and the wirings of the edges of $H$ (call this $(\widetilde{B_h\setminus H})^\xi$), and $|H|$ independent FK dynamics chains on single edges with wired boundary. By tensorization of the spectral gap (see e.g.,~\cite{SClecture-notes}) the spectral gap of the FK dynamics on $B_h$ is then the minimum of the gap on those individual edges, and the gap of the FK dynamics on $(\widetilde{B_h\setminus H})^\xi$. 
	
    The spectral gaps of the individual edges are clearly some constant depending on $p$, so it suffices to bound the spectral gap of FK dynamics on $(\widetilde{B_h\setminus H})^\xi$. For this purpose, notice that with a cost of $q^{5(|H|+ L)}$ we can perform a further boundary modification and remove the wirings of the edges in $H$ as well as those $L$ additional wirings in $\xi$ to end up with the tree $B_h \setminus H$ with a single-component boundary condition.  At this point, we can bound the spectral gap of this resulting single-component tree of depth $h$ using Lemma~\ref{lem:wired-tree-spectral-gap}. Putting together the costs from the various comparisons we obtain the desired. 
\end{proof}

\subsection{Mixing times on locally treelike graphs}\label{subsec:proof-locally-treelike}
In this section, we combine the above parts to establish the near-linear mixing time bound of Theorem~\ref{thm:exp-growth-mixing} for FK dynamics on locally treelike graphs, as long as they have an exponentially strong supercritical phase for edge-percolation.  

\begin{proof}[\textbf{\emph{Proof of Theorem~\ref{thm:exp-growth-mixing}}}]
    Recall that we use $X_t^{\omega_0}$ to denote the FK dynamics at time $t$ from initialization $\omega_0$.
    By monotonicity, under the grand coupling, we have 
    \begin{align}\label{eq:disagreement-probability-monotonicity}
        \max_{\omega,\omega'} \mathbb P(X_t^\omega \ne X_t^{\omega'}) & \le \sum_{e\in E(G)} \mathbb P(X_t^\omega(e)\ne X_t^{\omega'}(e)) \nonumber \\
        & \le \sum_{e\in E(G)} \mathbb P(X_t^\zero(e)\ne X_t^{\one}(e)) =  \sum_{e\in E(G)} \mathbb E[X_t^\one(e)] -\mathbb E[X_t^\zero(e)]\,.
    \end{align}
    Fix an edge $e\in E(G)$ and consider the difference in expectations on the right. 
     Set $\bar X_t^\zero$ to be the censored FK dynamics that agrees with $X_t^\zero$ for all times until some $T_0$ but that then censors (ignores) all updates after time $T_0$ outside of $B_{h} = B_h(o)$ for some $o\in e$ and for $h= \frac{\eta}{2}\log n$. Let $\bar X_t^\one$ be the Markov chain that censors \emph{all} updates of $X_t^\one$ outside of $B_h$ (regardless of $t$). 
    Then, by the censoring inequality of~\cite{PWcensoring}, 
    \begin{align}\label{eq:censoring}
        \mathbb E[X_t^\one (e) ] - \mathbb E[X_t^\zero(e)]\le \mathbb E[\bar X_t^\one(e)] - \mathbb E[ \bar X_t^\zero(e)]\,.
    \end{align}
    By Lemma~\ref{lem:domination-after-burn-in} and Lemma~\ref{lem:burn-in-boundary-conditions}, for every $r$, there exist $T_0(r,q)$ and $p_0$ large enough that for all $p\ge p_0$, the FK dynamics $X_{T_0}^\zero, X_{T_0}^\one$ induce $(r,K)$-wired boundary conditions on $\partial B_{h/2}$.  
    
    Let $A_\gamma = A_\gamma(B_h)$ be the set of boundary conditions $\xi$ on $B_h$ that are single-component together with at most $K$ additional wirings, and furthermore that are such that the inequality of Lemma~\ref{lem:treelike-spatial-mixing} holds with constant $c_{p} = \gamma$. Since $X_{T_0}^\zero, X_{T_0}^\one$ induce $(r,K)$-wired boundary conditions on $\partial B_{h/2}$, by Lemma~\ref{lem:treelike-spatial-mixing}, 
    \begin{align}\label{eq:burnt-in-bc-good}
        \mathbb P(X_{T_0}^\zero(B_{h/2}^c)\notin A_\gamma^c) \le e^{ - c_{r}(1.1)^{h/2K}}\,.
    \end{align}
    At the same time, by definition of $A_\gamma$, we have
    \begin{align}\label{eq:local-spatial-mixing}
        \max_{\xi \in A_\gamma} \big(\pi_{B_h}^\one(\omega_e) - \pi_{B_h}^{\xi}(\omega_e)\big) \le Ce^{ - \gamma h/(3K)}\,.
    \end{align}
    so long as $n$ is large enough that $h/(2K)- K - K \ge h/(3K)$. 
    Since the event $\{\bar X_{T_0}^\zero(B_h^c) \in A_\gamma\}= \{X_{T_0}^\zero(B_h^c) \in A_\gamma\}$ is measurable w.r.t.\ $\mathcal F_{T_0}$ (the filtration generated by the grand coupling up to time $T_0$), 
    \begin{align*}
        \mathbb E[\bar X_{T_0+S}^\one(e)] - \mathbb E[\bar X_{T_0+S}^\zero(e)] & \le \mathbb P(\bar X_{T_0}^\zero(B_h^c) \notin A_\gamma) + \mathbb P(\bar X_{T_0 + S}^\one (e) \ne \bar X_{T_0 + S}^\zero(e) \mid \bar X_{T_0}^\zero(B_h^c) \in A_\gamma) \\ 
        & \le \mathbb P(X_{T_0}^\zero(B_h^c) \notin A_\gamma) + \max_{\xi\in A_\gamma(B_h)} (\mathbb E[ Y_{S,B_h^\one}^\one(e)] - \mathbb E[Z_{S,B_h^\xi}^\zero(e)])\,,
    \end{align*}
    where $Y_{s,B_h^\one}^\one$ and $Z_{s,B_h^\xi}^\zero$ are Glauber chains on $B_h$ with boundary conditions $\one$ and $\xi$ respectively, initialized from $\one$ and $\zero$ respectively. We have used here the definition of censored dynamics and monotonicity. The first term is at most $e^{ - c_{r}(1.1)^{h/2K}}$ by~\eqref{eq:burnt-in-bc-good}. 
    
    For the second term, fix any $\xi \in A_\gamma(B_h)$, let $Y_s = Y_{s,B_h^\one}^\one$ and $Z_s = Z_{s,B_h^\xi}^\zero$, and write 
    \begin{align*}
        \mathbb E[ Y_{S}(e)] - \mathbb E[Z_{S}(e)] = \big(\mathbb E[Y_S^\one(e)] - \pi_{B_h^\one}(\omega_e)\big) + \big( \pi_{B_h^\one}(\omega_e) - \pi_{B_h^\xi}(\omega_e)\big) + \big(\pi_{B_h^\xi}(\omega_e) - \mathbb E[Z_S^\zero(e)] \big)\,.
    \end{align*}
    The middle term is at most $Ce^{- \gamma h/(3K)}$ by~\eqref{eq:local-spatial-mixing}. For the first and third, suppose  
    \begin{align*}
        S \ge C_0 \log n \cdot \max_{\xi\in A_\gamma(B_h)} \log\Big(\frac{1}{\min_{\omega} \pi_{B_h}^\xi(\omega)}\Big)\cdot \lambda_{B_h^\xi}^{-1}\,,
    \end{align*}
    where $\lambda_{B_h^\xi}^{-1}$ is the inverse spectral gap of the FK dynamics on $B_h$ with boundary conditions $\xi$. Then by~\eqref{eq:tmix-trel-comparison} and sub-multiplicativity of TV-distance to stationarity, for a universal constant $C_0$, both the first and third terms will be at most $n^{-5}$.
    Combining, we get  
    \begin{align}\label{eq:censored-dynamics-bound}
        \mathbb E[\bar X_{T_0+S}^\one(e)] - \mathbb E[\bar X_{T_0+S}^\zero(e)] \le C(e^{ - c_{r} (1.1)^{h/(2K)}} + e^{ - \gamma h/(3K)} + n^{-5})  \,.
    \end{align}
    At this point we make the following choices for $\eta,\gamma,T_0,S$:
    \begin{enumerate}
     \item $h = \frac{\eta}{2} \log n$ for $\eta>0$ sufficiently small that for every $\xi\in A_\gamma(B_h)$, 
     $$\log\Big(\frac{1}{\min_{\omega} \pi_{B_h}^\xi(\omega)}\Big)\cdot \lambda_{B_h^\xi}^{-1}\le \Delta^{\frac{\eta}{2}\log n} \lambda_{B_h^\xi}^{-1} \log(1-p)\quad \text{is at most}\quad n^{\epsilon/2}\log(1-p) \,;$$
     \item $S= C_1 n^{\epsilon/2} \log n$ where $C_1 = C_0 \log(1-p)$; 
     \item $\gamma$ large enough that $e^{- \gamma h/(2K)}$ is at most $n^{-5}$;
     \item $r$ large enough that $c_{r}>0$ 
     \item $T_0$ and $p$ large enough that $X_{T_0}^\zero$ induces $(r,K)$-wired boundary on $\partial B_{h/2}$.
     \end{enumerate}
     The existence of such an $\eta(\Delta,q,K)$ follows from Lemma~\ref{lem:treelike-mixing-time} and the fact that $a$ only depends on $\Delta,q$. Furthermore, by taking $r$ large, we can make $\gamma$ and $c_{r}$ arbitrarily large, to satisfy items (3)--(4). Finally, Lemma~\ref{lem:burn-in-boundary-conditions} ensures we can take $T_0,p_0$ large enough that for all $p\ge p_0$, item (5) is satisfied. 

     With these choices, for $n$ sufficiently large, the right-hand side of~\eqref{eq:censored-dynamics-bound} is at most $n^{-4}$. 
     Combining these with~\eqref{eq:disagreement-probability-monotonicity}--\eqref{eq:censoring}, we get that there exists $p_0(c_{\tilde p},\tilde p_0,\Delta,\eta,q,K)$ such that for all $p\ge p_0$, 
     \begin{align*}
        \max_{\omega}\|\mathbb P(X_{T_0 + S}^\omega\in \cdot) - \pi\|_\tv \le \max_{\omega,\omega'}\mathbb P(X_{T_0 + S}^\omega \ne X_{T_0 + S}^{\omega'})  = o(1)\,,
     \end{align*}
     which concludes the proof since evidently $T_0 + S = O(n^{\epsilon/2}\log n) = O(n^{\epsilon})$.  
\end{proof}

\section{Graphs with slow mixing at arbitrarily low temperatures}\label{sec:slow-mixing-graphs}

In this section, we establish the following slow mixing result for the FK dynamics. Theorem~\ref{thm:slow-mixing} is the special case where we have restricted to integer $q$ and used the comparison results of~\cite{Ullrich-random-cluster}. 

\begin{theorem}\label{thm:slow-mixing-FK}
    Fix $\Delta \ge 3$ and $p_0<1$. 
    \begin{enumerate}
        \item For any $q\ge3$, there exists $p>p_0$ and a sequence of locally treelike graphs $(G_n)_n$ of maximum degree $\Delta$ such that the FK dynamics on $G_n$ have $\tmix\ge \exp(\Omega(n))$. 
        \item For any $q>4$ (possibly non-integer), there exists $p>p_0$ and a sequence $(G_n)_n$ of polynomial volume growth and maximum degree $\Delta$ such that the FK dynamics of $G_n$ have $\tmix \ge \exp(\Omega(\sqrt{n}))$. 
    \end{enumerate}
\end{theorem}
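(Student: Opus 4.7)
The proof follows the gadget-augmentation strategy outlined in the text after Theorem~\ref{thm:slow-mixing}. The plan is to take a gadget graph $H_n$ with known slow mixing of FK dynamics at some fixed $p_s \in (0,1)$, then apply the random-cluster series law to build $G_n$ so that the effective parameter induced on $H_n$ equals $p_s$ while the nominal parameter on $G_n$ lies in $(p_0, 1)$. For part~(1), I take $H_n$ to be a uniformly random $\Delta$-regular graph, for which~\cite{PottsRGMetastabilityCMP} gives $\tmix \ge \exp(\Omega(n))$ at the order/disorder transition $p_s = p_s(q,\Delta)$ with high probability, and which is locally treelike w.h.p. For part~(2), I take $H_n = (\mathbb{Z}/\sqrt{n}\,\mathbb{Z})^2$, with $\tmix \ge \exp(\Omega(\sqrt{n}))$ at the first-order transition $p_s = p_s(q)$ for $q > 4$ by~\cite{GL1}.

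The RC series law states that marginalizing a degree-two vertex with incident edges of weights $y_i = p_i/(1-p_i)$ yields an effective edge with weight $y_{\mathrm{eff}} = y_1 y_2/(q+y_1+y_2)$; iterating, a path of $k$ edges of parameter $p$ reduces to an effective edge with parameter $p^{(k)}$ via $y^{(k)} = y\cdot y^{(k-1)}/(q+y+y^{(k-1)})$, with $y^{(1)} = y = p/(1-p)$. The map $p \mapsto p^{(k)}$ is continuous and strictly increasing on $(0,1)$, and for each fixed $p<1$, $p^{(k)}$ is decreasing in $k$ with $p^{(k)} \to 0$ (the only non-negative fixed point of the recursion is $0$). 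Hence, for any $p_s \in (0,1)$ and $p_0 < 1$, I can select $k = k(q, p_0, p_s)$ large enough and $p \in (p_0, 1)$ with $p^{(k)} = p_s$. I then define $G_n$ by subdividing each edge of $H_n$ into a length-$k$ path; the resulting graph has maximum degree $\Delta$, $\Theta(n)$ vertices, and inherits the relevant structural property of $H_n$: locally treelike (with the radius constant multiplied by $k$) for part~(1), and polynomial growth for part~(2).

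To transfer the slow mixing, let $\Pi : \Omega_{G_n} \to \Omega_{H_n}$ be the projection $\Pi(\omega)_e = \prod_{f \in P_e} \omega_f$ recording which subdivided paths are fully open. Note that any path in $G_n$ between two vertices of $V(H_n)$ is forced to traverse each $P_{e'}$ it enters entirely, since internal path vertices have degree $2$ in $G_n$; hence connectivity of $V(H_n)$-vertices in $\omega$ depends only on $\Pi(\omega)$, and the series law gives $\Pi_{\#}\pi_{G_n,p,q} = \pi_{H_n,p_s,q}$. For a bottleneck set $S \subset \Omega_{H_n}$ witnessing slow mixing on $H_n$ at $p_s$ (with $\pi_{H_n,p_s,q}(S) \in [\tfrac{1}{3},\tfrac{2}{3}]$ and bottleneck ratio $e^{-\Omega(f(n))}$ for $f(n)=n$ or $\sqrt n$), the lift $\widetilde{S} := \Pi^{-1}(S)$ satisfies $\pi_{G_n,p,q}(\widetilde{S}) \in [\tfrac{1}{3},\tfrac{2}{3}]$. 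My plan to control its conductance under single-edge FK dynamics on $G_n$ is to factor through the block dynamics on $G_n$ whose blocks are the subdividing paths $\{P_e\}_{e\in E(H_n)}$. This block chain has stationary distribution $\pi_{G_n,p,q}$, its $\Pi$-projection is exactly FK dynamics on $H_n$ at $p_s$ (so $\Phi_{\mathrm{block}}(\widetilde{S})$ equals the bottleneck ratio of $S$ on $H_n$), and it is comparable to single-edge FK dynamics on $G_n$ via a standard canonical-paths argument that decomposes each block update into at most $k$ single-edge updates, incurring at most a $\mathrm{poly}(k,n)$ spectral-gap slowdown. The Cheeger inequality $\tmix \gtrsim 1/\Phi$ then delivers the claimed $\tmix \ge \exp(\Omega(f(n)))$.

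The main obstacle is the block-versus-single-edge comparison in the last step: one must verify that the polynomial loss in the comparison cannot swallow the exponential bottleneck, and that the canonical-paths argument is robust to the dependence of each block's conditional distribution on the boundary connectivity induced by the remaining blocks. An alternative approach, avoiding block dynamics altogether, is to estimate $\Phi_{G_n}(\widetilde{S})$ directly by noting that any single-edge transition leaving $\widetilde{S}$ must toggle $\Pi(\omega)_e$ for some $e$ (i.e., either close an edge in a fully-open $P_e$ or open the last closed edge of an almost-open $P_e$), and controlling the probability of such toggles via the conditional distribution of $\omega(P_e)$ given the $V(H_n)$-endpoint connectivity in $\omega(E(G_n) \setminus P_e)$, combined with the bottleneck bound inherited from $H_n$.
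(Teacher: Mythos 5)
Your gadget choices (random $\Delta$-regular graphs via~\cite{PottsRGMetastabilityCMP} for item~(1), the torus $(\mathbb{Z}/\sqrt n\,\mathbb{Z})^2$ via~\cite{GL1} for item~(2)), the series-law subdivision, and the lift $\widetilde S = \Pi^{-1}(S)$ of a bottleneck all match the paper's construction in Section~\ref{sec:slow-mixing-graphs}. But your \emph{primary} route for transferring the bottleneck to the single-edge chain has the comparison inequality pointing the wrong way. Canonical paths that decompose each block move into at most $k$ single-edge moves bound the single-edge Dirichlet form from \emph{below} by the block Dirichlet form (up to the congestion), giving $\lambda_{\mathrm{single}} \ge \lambda_{\mathrm{block}}/\mathrm{poly}(k,n)$. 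That only says single-edge dynamics is at most polynomially slower than block dynamics --- vacuous when $\lambda_{\mathrm{block}}$ is exponentially small --- and it does not give the \emph{upper} bound on $\lambda_{\mathrm{single}}$ that a slow-mixing lower bound requires. (The useful inequality $\lambda_{\mathrm{single}} \le C\,\lambda_{\mathrm{block}}$ does hold, but it comes from noticing that every single-edge move on $f\in P_e$ is already a block move on $P_e$ with probability bounded below by a $\mathrm{poly}(k)^{-1}$ factor, i.e., from containing single-edge moves in block moves, not from decomposing block moves into single-edge ones.)

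Your \emph{alternative} approach is the right one and is essentially the paper's proof; it just needs the following sharpening. Any single-edge transition $\omega\to\omega'$ from $\widetilde S$ to $\widetilde S^c$ flips one edge inside a single path $P_e$, so $\Pi(\omega)$ and $\Pi(\omega')$ agree off $e$; since $\Pi(\omega)\in S$ and $\Pi(\omega')\notin S$, $\Pi(\omega)$ must lie in the interior boundary $\partial_{P_H}S:=\{x\in S : P_H(x,y)>0\text{ for some }y\in S^c\}$. Hence $\partial_{P_G}\widetilde S\subset\Pi^{-1}(\partial_{P_H}S)$ and, by the series law, $\pi_{G_n}(\partial_{P_G}\widetilde S)\le\pi_{H_n}(\partial_{P_H}S)$. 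Since every positive entry of $P_H$ is at least $\ps/|E(H_n)|$, the measure $\pi_{H_n}(\partial_{P_H}S)$ is at most $|E(H_n)|\ps^{-1}\Phi_H(S)\pi_{H_n}(S)=e^{-\Omega(f(n))}\pi_{G_n}(\widetilde S)$, and then bounding the conductance numerator by $|E(G_n)|\pi_{G_n}(\partial_{P_G}\widetilde S)$ gives $\Phi_{G_n}(\widetilde S)=e^{-\Omega(f(n))}$; Cheeger's inequality finishes. You do not need to analyze conditional distributions of $\omega(P_e)$ given endpoint connectivity at all --- the containment of boundaries plus the series law's measure-preservation is enough. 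So strike the block-dynamics route (or fix the comparison direction as above) and flesh out the alternative along these lines.
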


The key tool for this proof will be the so-called ``series law'' for the random-cluster model. That is, splitting any edge $e$ into two edges, 
and changing the edge probability parameter for $e$ from $p_e$ to approximately $\sqrt{p_e}$ for each of the two new resulting edges preserves the random-cluster measure when one re-identifies the edges and takes the status of $e$ as being open if the two new edges are open. This gives a mechanism for boosting any fixed $p$ into a $p'$ which gets closer and closer to $1$, while only increasing the number of edges and vertices in the graph by a constant factor. In this manner, if a graph $H$ has slow mixing for its FK dynamics at some value of $p$ (no matter how small), then its modification $G$ (obtained by multiple applications of the series law) can be made to have slow mixing at $p'$, for $p'$ that can be arbitrarily close to $1$. Moreover, the graph modifications do not distort the maximum degree and volume growth (though they importantly do distort the isoperimetric dimension, and expansion rates, which we recall were fundamental to the presence of a strongly supercritical phase for the edge-percolation on the graph). 

\begin{remark}
    Both items (1)--(2) of Theorem~\ref{thm:slow-mixing-FK} should hold for all $q>2$.  The gap for $q$ non-integer in item (1) and $q\in (2,4]$ in item (2)  come from the present lack of proof (to our knowledge) of a slowdown for FK dynamics on bounded degree graphs (satisfying the corresponding graph condition) at those values of $q$. Such a slowdown is widely expected at the critical points both for the random regular graph and on $(\mathbb Z/m\mathbb Z)^d$ for large $d$. 
    The values of $q$ for which we can establish our lower bound come from the slowdowns of the random regular graph at integer $q\ge 3$~\cite{PottsRGMetastabilityCMP}, and the torus $(\mathbb Z/\sqrt{n}\mathbb Z)^2$ at $q>4$~\cite{GL1}.  
\end{remark}

Let us precisely recall the series law of the random-cluster model from~\cite[Theorem 3.89]{Grimmett}. 
\begin{lemma}\label{lem:RC-series-rule}
Two edges $e,f$ of a graph $G=(V,E)$ are in \emph{series} if $e= \{u,v\}$ and $f=\{v,w\}$ and $v$ has no other incident edges. Let 
\begin{align}\label{eq:rc-series-law}
    \sigma(x,y,q) = \frac{xy}{1+(q-1)(1-x)(1-y)}\,.
\end{align}
Let $G' = (V\setminus \{v\}, (E\setminus \{e,f\})\cup \{u,w\})$. 
For a random-cluster configuration $\omega$ on $G$, define $\omega'$ on $G'$ by
setting $\omega'(a) = \omega(a)$ for $a \in E \setminus \{e,f\}$ and $\omega'(\{u,w\})= \omega(e) \cdot \omega(f)$. Then, if $\omega$ is sampled from the random-cluster distribution on $G$ with parameters $(p_a)_{a \in E}$ and $q > 0$, $\omega'$ is distributed according to the random-cluster distribution on $G'$ with parameters $(p_a)_{a \in E\setminus \{e,f\}}$ and $p_{\{u,w\}} = \sigma(p_e,p_f,q)$.  
\end{lemma}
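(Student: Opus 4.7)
My plan is to verify the series law by explicit pushforward computation: fix any $\omega'$ on $G'$ and compare its unnormalized random-cluster weight against the sum, over preimages $\omega$ on $G$, of unnormalized weights under $\pi_G$. Since the map modifies only the states on $\{e,f,\{u,w\}\}$, the product $\prod_{a \in E\setminus\{e,f\}} p_a^{\omega(a)}(1-p_a)^{1-\omega(a)}$ is common to every preimage and factors out of the computation; the only nontrivial bookkeeping is the number of connected components and the contribution of $p_e,p_f$.

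The central combinatorial step is to track $k(\omega)$ carefully. Let $k_0$ denote the number of components of the subgraph $(V\setminus\{v\},\omega(E\setminus\{e,f\}))$, which depends only on $\omega'(E\setminus\{e,f\})$. Since $v$ is incident only to $e,f$ in $G$, a direct case check gives:
\begin{itemize}
\item $(\omega(e),\omega(f))=(0,0)$: $v$ is isolated, so $k(\omega)=k_0+1$;
\item $(\omega(e),\omega(f))\in\{(1,0),(0,1)\}$: $v$ joins the component of $u$ or $w$, so $k(\omega)=k_0$;
\item $(\omega(e),\omega(f))=(1,1)$: $v$ is joined to both $u$ and $w$, so $k(\omega)=k_0$ or $k_0-1$ depending on whether $u,w$ are already connected in $(V\setminus\{v\},\omega(E\setminus\{e,f\}))$.
\end{itemize}
An analogous, simpler check for $G'$ shows that $k(\omega')$ takes exactly the same value as $k(\omega)$ in the $(1,1)$ preimage (when $\omega'(\{u,w\})=1$) and matches the $(0,*)$ or $(*,0)$ preimages (when $\omega'(\{u,w\})=0$), so the quantity $q^{k(\omega)}$ is constant across all three preimages in the latter case, equal to $q^{k(\omega')}$. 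The only exception is the isolated-$v$ preimage $(0,0)$, which carries an extra factor of $q$.

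I then assemble the pushforward weight as a sum over the preimages of $\omega'$. If $\omega'(\{u,w\})=1$ the preimage is unique and contributes $p_ep_f\,q^{k(\omega')}$ (times the common product). If $\omega'(\{u,w\})=0$ the three preimages contribute
\begin{equation*}
\bigl[q(1-p_e)(1-p_f)+p_e(1-p_f)+(1-p_e)p_f\bigr]\,q^{k(\omega')}\,.
\end{equation*}
Using the algebraic identity $(1-p_e)(1-p_f)+p_e(1-p_f)+(1-p_e)p_f=1-p_ep_f$, this bracket simplifies to $C-p_ep_f$, where $C:=1+(q-1)(1-p_e)(1-p_f)$. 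Dividing the two pushforward weights, the ratio of the $\omega'(\{u,w\})=1$ weight to the $\omega'(\{u,w\})=0$ weight equals $p_ep_f/(C-p_ep_f)$, which is exactly the odds ratio $p_{\{u,w\}}/(1-p_{\{u,w\}})$ for $p_{\{u,w\}}:=p_ep_f/C=\sigma(p_e,p_f,q)$ as given in \eqref{eq:rc-series-law}. Since the pushforward assigns weights proportional to those of the random-cluster distribution on $G'$ with the claimed edge parameters (with all other $p_a$ unchanged) and the same $q$, the two distributions agree, proving the lemma.

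The main obstacle is almost purely bookkeeping: ensuring that the component counting in $G$ versus $G'$ lines up across all four $(\omega(e),\omega(f))$ choices and in both possible connectivity regimes for $u,w$. The algebraic identity that collapses the three non-$(1,1)$ preimages into a clean expression in terms of $C$ is what makes the final $\sigma(\cdot,\cdot,q)$ formula appear; apart from this, no additional tools are needed beyond the definition \eqref{eq:fk-definition}.
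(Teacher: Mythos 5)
Your proof is correct. The paper itself does not prove this lemma---it is quoted directly from Grimmett's book (Theorem 3.89)---and your pushforward computation is precisely the standard argument: the component-count bookkeeping is handled correctly (in particular the extra factor of $q$ from the isolated vertex $v$ in the $(0,0)$ preimage, and the fact that $k(\omega)=k(\omega')$ in all other cases), and the identity collapsing the three closed-edge preimages to $C-p_ep_f$ with $C=1+(q-1)(1-p_e)(1-p_f)$ yields exactly $\sigma(p_e,p_f,q)$, with the constant $C$ independent of $\omega'$ so that the pushforward is indeed the random-cluster measure on $G'$.
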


\begin{proof}[\textbf{\emph{Proof of Theorem~\ref{thm:slow-mixing-FK}}}]
For item (1), let $(H_n)_n$ be a locally treelike sequence of $n$-vertex graphs of degree at most $\Delta$ such that the mixing time of FK dynamics at some fixed value of $p\in (0,1)$ is $\exp(\Omega(n))$. From Theorem 1.2 of~\cite{PottsRGMetastabilityCMP}, we know that a randomly drawn sequence of $\Delta$-regular graphs satisfies this bound with high probability if $q\ge 3$ is an integer and $p = p_c(q,\Delta)$. For item (2), let $(H_n)_n$ be the torii $(\mathbb Z/\sqrt{n}\mathbb Z)^2$. We then know from Theorem~2 of~\cite{GL1}  that at fixed $p= p_c(q)\in (0,1)$ the mixing time of FK dynamics on $H_n$ is $\exp(\Omega(\sqrt{n}))$ for all real $q>4$.

Let $G_n$ be the modification of $H_n$ in which every edge of $H_n$ is split into $2^K$ edges in series, for $K$ determined as follows. 
Let $\zeta_q(p)$ be the inverse of $\sigma(x,x,q)$ from~\eqref{eq:rc-series-law}, i.e., 
$$\sigma(\zeta_q(p),\zeta_q(p),q)=p.$$
Such an inverse exists and is increasing for $p\in (0,1)$ by virtue of the fact that $\sigma(x,x,q)$ is continuously increasing from $0$ to $1$ as $x$ ranges from $[0,1]$. Since $x^2/q \le \sigma(x,x,q) \le x^2$
it must be the case that $\zeta_q(p)^2/q \le \sigma(\zeta_q(p),\zeta_q(p),q) \le \zeta_q(p)^2$. 
In order for this to be equal to $p$, it must be that $\sqrt{p}\le \zeta_q(p)\le \sqrt{pq}$.

Take $K$ such that 
$$
     \zeta_q^{\circ K}(p):= \underbrace{\zeta_q(\,\zeta_q(\,\cdots \,\zeta_q}_{K}(p)\cdots))\ge p_0\,.
$$
The inequality $\sqrt{p} \le \zeta_q(p)$ ensures that for any
$$
    K \ge \log_2\Big(\frac{\log p}{\log p_0}\Big)\,,
$$
we have $\zeta_q^{\circ K}(p) \ge p_0$. By Lemma~\ref{lem:RC-series-rule}, if $\omega'$ is a random-cluster configuration on $G_n$ with parameters $p'= \zeta_q^{\circ K}(p)$ and $q$, and  $\omega_e = \prod_{e_i} \omega_{e_i}'$ for every edge $e\in E(H_n)$, where $(e_i)_{i=1}^{2^K}$ are the edges in $G_n$ derived from splitting $e$, then $\omega$ is a sampled from the random-cluster measure on $H_n$ with parameters $p$ and $q$. 

We now claim that the FK dynamics on $G_n$ with parameters $p'$ and $q$ have $\exp(\Omega(n))$ mixing if $H_n$ has $\exp(\Omega(n))$ mixing at parameters $(p,q)$ (the reasoning that it has $\exp(\Omega(\sqrt{n}))$ mixing if $H_n$ has $\exp(\Omega(\sqrt{n})$ mixing is identical, so we omit it). This is achieved by lifting a bottleneck set from $H_n$ to $G_n$. In what follows, it is convenient to work with the discrete-time Glauber dynamics (though the same proof would work in the continuous-time setting as well). Let $P_H$ denote the transition matrix of the discrete-time FK dynamics on $H_n$ at parameter $p$ and $P_G$ the transition matrix of the discrete-time FK dynamics on $G_n$ at parameter $p'$. Similarly, $\pi_{H_n}$ is at parameter $p$ while $\pi_{G_n}$ is at parameter $p'$.  

By assumption, the mixing time of FK dynamics on $H_n$ is exponential in $n$. As such, there exists a subset of configurations $A \subset\{0,1\}^{E(H_n)}$ of exponentially small conductance. 
More precisely, 
there must exist $A \subset\{0,1\}^{E(H_n)}$ such that $\pi_{H_n}(A) \le 1/2$ and 
\begin{equation}
    \label{eq:conductance}
    \Phi_H(A) := \frac{\sum_{x \in A, y\in A^c} \pi_{H_n}(x) P_{H}(x,y)}{\pi_{H_n}(A)} = \exp(-\Omega(n))\,;
\end{equation}
see, e.g.,~\cite[Theorem 13.10]{LP}. Let $\partial_{P_H} A = \{x \in A: P_{H}(x,y)>0~\text{for some}~y\in A^c\}$.
Since for each $x \in \partial_{P_H} A$, 
%
each entry of $P_{H_n}$ is at least $\frac{1}{n} \ps$, 
we see that there exists a constant $c > 0$ such that
\begin{align}\label{eq:H-bottleneck}
    \frac{\pi_{H_n}(\partial_{P_{H}} A)}{\pi_{H_n}(A)} = \frac{\sum_{x\in \partial_{P_H} A} \pi_{H_n} (x)}{\pi_{H_n}(A)} \le  n \ps^{-1} \Phi_H(A) \le \exp( - c n)\,.
\end{align}
Let $T_A$ be the subset of $\{0,1\}^{E(G_n)}$ defined by $T_A = \{\omega':  \omega \in A\}$ where the relationship between $\omega$ and $\omega'$ is defined per the operation described above. By Lemma~\ref{lem:RC-series-rule}, $\pi_{G_n}(T_A) = \pi_{H_n}(A)$. 
Every configuration in $\partial T_A$ must be in $T_{\partial A}$ because if $P_{G}(\omega',\sigma')>0$, for $\sigma'\notin T_A$, then $\sigma'$ projects down to a configuration in $A^c$, so $\omega'$ must project into $\partial A$. Therefore, by Lemma~\ref{lem:RC-series-rule}, $\pi_{G_n}(\partial_{P_{G}} T_A)\le \pi_{H_n}(\partial_{P_{H}} A)$. Altogether, it follows from~\eqref{eq:H-bottleneck} that 
\begin{align*}
    \frac{\pi_{G_n}(\partial_{P_{G}} T_A)}{\pi_{G_n}(T_A)} \le \exp( - cn)\,.
\end{align*}
Using the facts that for every $x$, the number of $y\in {T_A}^c$ for which $P_G(x,y)$ is positive is at most $E(G_n)$,  
\begin{align*}
    \frac{\sum_{x\in T_A,y\in T_A^c} \pi_{G_n}(x) P_G(x,y)}{\pi_{G_n}(T_A)} \le |E(G_n)|\frac{\pi_{G_n}(\partial_{P_{G}} T_A)}{\pi_{G_n}(T_A)}\,,
\end{align*}
implies that $T_A$ is a set of exponentially small conductance for the FK dynamics on $G_n$. 
This then implies the inverse gap and mixing time of FK dynamics on $G_n$ are both exponential in $n$. 

It remains to reason that the number of vertices and edges of $G_n$ are of the same order as  the number of vertices and edges of $H_n$, so that the resulting bounds are indeed exponential in $|V(G_n)|$. 
Notice that as long as $p  = \Omega(1)$ and $1-p_0 =  \Omega(1)$, then $K = O(1)$. As such, $G_n$ will have $|V(G_n)| \le |V(H_n)| + 2^K |E(H_n)| =  O(|V(H_n)|)$, and $|E(G_h)| \le 2^K |E(H_n)|$. This yields the claimed bound. 
\end{proof}

 \subsection*{Acknowledgements} 
The authors thank the anonymous referees for their careful reading and comments.
The research of AB was supported in part by NSF grant CCF-2143762. The research of R.G.\ was supported in part by NSF DMS-2246780.

\bibliographystyle{alpha}
\bibliography{references}
\end{document}